\def\uh{\hat{u}}
\def\fe{f_{\epsilon}}
\def\one{\mathbf{1}}
\newcommand{\aac}{<\!\!<}
\newcommand{\mean}[1]{\,-\hskip-1.08em\int_{#1}} 
\newcommand{\meantext}[1]{\,-\hskip-0.88em\int_{#1}} 
\newcommand{\res}{\mathop{\hbox{\vrule height 7pt width .5pt depth 0pt
\vrule height .5pt width 6pt depth 0pt}}\nolimits} 
\def\Resto{\mathcal{R}}
\DeclareMathOperator{\spt}{spt} 
\DeclareMathOperator{\Div}{div}
\DeclareMathOperator{\sign}{sign}
\newcommand{\M}[1]{\mathcal{#1}}    
\renewcommand{\H}{\M{H}}
\renewcommand{\L}{\M{L}}
\newcommand{\R}{\mathbb{R}}
\newcommand{\Haus}[1]{\mathcal{H}^{#1}} 
\newcommand{\Leb}[1]{\mathcal{L}^{#1}} 
\newcommand{\eps}{\varepsilon}
\newcommand{\W}{W}
\def\medint{-\kern  -,375cm\int}
\newcommand{\A}{\boldsymbol A}
\renewcommand{\a}{\boldsymbol a}
\newcommand{\eeta}{\boldsymbol \eta}
\newtheorem{theorem}{Theorem}[section]
\newtheorem*{theoremnonumber}{Theorem}
\newtheorem{lemma}[theorem]{Lemma}
\newtheorem{proposition}[theorem]{Proposition}
\theoremstyle{definition}
\newtheorem{definition}[theorem]{Definition}
\theoremstyle{remark}
\newtheorem{remark}[theorem]{Remark}
\begin{document}

\author[G.~Crasta]{Graziano Crasta}
\address{Dipartimento di Matematica ``G.\ Castelnuovo'', Univ.\ di Roma I\\
P.le A.\ Moro 2 -- I-00185 Roma (Italy)}
\email{crasta@mat.uniroma1.it}
\author[V.~De Cicco]{Virginia De Cicco}
\address{Dipartimento di Scienze di Base  e Applicate per l'Ingegneria, Univ.\ di Roma I\\
Via A.\ Scarpa 10 -- I-00185 Roma (Italy)}
\email{virginia.decicco@sbai.uniroma1.it}
\author[G.~De Philippis]{Guido De Philippis}
\address{Institut f\"ur Mathematik, Universit\"at Z\"urich --
Winterthurerstrasse 190 -- CH-8057 Z\"urich}
\email{guido.dephilippis@math.uzh.ch}

\keywords{Chain rule, $BV$ functions, conservation laws with discontinuous flux}
\subjclass[2010]{Primary: 35L65; Secondary:  26A45}
%

\date{September 12, 2014}

\title[Conservation laws with discontinuous flux]{Kinetic formulation and uniqueness for scalar conservation laws
with discontinuous flux}
\begin{abstract}
We prove a uniqueness result for $BV$ solutions of scalar conservation laws with discontinuous flux in several space dimensions.
The proof is based on the notion of kinetic solution and on a careful analysis of the entropy dissipation along the discontinuities of the flux.
\end{abstract}

\maketitle

\section{Introduction}
This paper is concerned with multidimensional scalar conservation laws with discontinuous flux, namely
\begin{equation}
\label{f:equa}
u_t+ \Div \A(x,u)=0,\qquad (t,x)\in (0,+\infty)\times\R^n,
\end{equation}
where the flux function $\A\colon\R^n\times \R\to\R^n$ is possibly discontinuous in the first variable and satisfies some structural assumptions listed in Section~\ref{sec:prel}. These type of equations have attracted a lot of attention in the last years since they naturally arise in several models (for instance  models of traffic flow,
flow in porous media, sedimentation processes, etc.), see \cite{AKR,GNPT,Pan1} and references therein for a more detailed account on the theory.

It is well known that, even for smooth fluxes, in general 
the Cauchy problem 
\begin{equation}
\label{f:cauchy}
\begin{cases}
u_t+ \Div \A(x,u)=0, & (t,x)\in (0,+\infty)\times\R^n,\\
u(0,x) = u_0(x), & x \in \R^n,
\end{cases}
\end{equation}
does  not  admit classical solutions.
On the other hand,  the notion of distributional  solution is too
weak in order to achieve well-posedness, in particular it does not provide uniqueness of the solution.
In this context, the notion of entropy solution turns out to be the right one,
as it has been shown by
Vol'pert \cite{vol} in the $BV$ setting and
by Kruzkov \cite{Kruz} in the $L^{\infty}$
framework.

The classical Kruzkov's approach \cite{Kruz}
completely solves the problem 
of well-posedness
in the case of smooth
fluxes.
Moreover, as it has been shown in recent years,
using a clever change of variables
and the concept of \textsl{adapted entropies},
this approach also works for a restricted class of
discontinuous fluxes
(see \cite{AP,disc0,CDC,Mitr-DCDS,Pan1}).

The case of more general discontinuous fluxes in one space variable has been extensively studied 
(see for example
\cite{AMV,AKR,AnMitr,AP,Cocl,Die,GNPT,disc8,Mitr-DCDS,Mitr} and
references therein). 
In particular, 
it has been pointed out 
(\cite{AMV,AKR,GNPT})
that many different admissibility criteria
generate continuous semigroups of solutions,
and the choice of the right criterion may depend
on the physics of the problem under consideration.
Indeed,
in addition to the classical entropy criteria, one  has to impose some conditions on the behavior of the solutions on the discontinuities of the flux. Roughly speaking different conditions give rise to different criteria.
These different conditions are coded, for example,
in the notion of \textsl{germ} (see \cite{AKR})
or of well-posed Riemann solver (see \cite{GNPT}).
One of the most studied admissibility criterion
is based on the notion of vanishing
viscosity solution.
This will actually  be the  criterion
that we are going to use in this paper, see Remark \ref{rmk:commenti} below.

\medskip

In spite of the intensive study concerning conservation laws with discontinuous fluxes,  
in the multidimensional case there are very few results available in the literature.
A very general existence result
has been obtained by Panov \cite{Pan1}.
On the other hand, a well-posedness result
for a restricted class of fluxes (having only one
regular hypersurface of discontinuity)
has been recently proved by
Mitrovic \cite{Mitr-multi}.

In this  paper we propose an entropy criterion, modeled on the ones introduced in the one-dimensional case in  \cite{AnMitr,Die, Mitr}, which allows to prove  uniqueness of \(BV\) entropy solutions
of the Cauchy problem \eqref{f:cauchy} under mild assumptions on the flux \(\A\). 
More precisely our main result is the following:

\begin{theoremnonumber}   Let \(\A\in L^\infty (\R^n\times \R;\R^n)\) satisfies \((H1)\)--\((H7)\) in Section  \ref{sec:prel}   below and let  \(u_1\) and \(u_2\) be two  \(BV\) entropy solutions of \eqref{f:equa} (see Definition~\ref{d:entrsol}), then
\[
\int_{\R^n}|u_1(T,x)-u_2(T,x)|\,dx\le \int_{\R^n}|u_1(0,x)-u_2(0,x)|\,dx.
\]
\end{theoremnonumber}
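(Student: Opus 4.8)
The plan is to establish an $L^1$-contraction estimate by the doubling-of-variables technique of Kruzkov, adapted to the kinetic formulation, with the crucial new ingredient being the control of the entropy dissipation concentrated on the discontinuity set $\mathcal N$ of the flux. First I would pass to the kinetic formulation of both entropy solutions $u_1$ and $u_2$: introducing the kinetic functions $\chi_{u_i}(t,x,v)$ (equal to $\mathbf 1_{0<v<u_i}-\mathbf 1_{u_i<v<0}$), the entropy inequalities become a transport equation in the extended variable $v$ driven by the flux $\partial_v \A(x,v)$, with a nonnegative kinetic measure $m_i$ on the right-hand side encoding the entropy dissipation. Away from $\mathcal N$ this is the classical kinetic formulation; the hypotheses $(H1)$–$(H7)$ must be exactly what is needed to make sense of $\A(x,v)$ and its dissipation structure across $\mathcal N$.

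\medskip

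Next I would compute, in the sense of distributions, the evolution of the quantity
\[
\int_{\R^n}\int_{\R} \bigl(\chi_{u_1}(t,x,v)-\chi_{u_2}(t,x,v)\bigr)^2\,dv\,dx,
\]
whose value is exactly $\int_{\R^n}|u_1(t,x)-u_2(t,x)|\,dx$ by the elementary identity $\int (\chi_a-\chi_b)^2\,dv=|a-b|$. Differentiating in $t$ and using the two kinetic equations produces, after the doubling argument and integration against the product structure, a transport term that integrates to zero on the regular part of the domain, plus two dissipation contributions: the (favorable, signed) contributions of the kinetic measures $m_1,m_2$, which carry the correct sign and can be discarded, and the delicate boundary terms supported on $\mathcal N$ arising from the discontinuity of $\A$ in $x$. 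The structure here mirrors the one-dimensional interface analysis of \cite{AnMitr,Die,Mitr}, now localized on the $(n-1)$-dimensional hypersurface $\mathcal N$ and integrated against $\Haus{n-1}\res\mathcal N$.

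\medskip

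The main obstacle, which I expect to be the technical heart of the paper, is controlling the term concentrated on $\mathcal N\cup J_{u_i(t)}$: along the flux-discontinuity interface the normal trace of $\A(x,u_i)$ jumps, and one must show that the combined contribution of the two solutions has the right sign. This is precisely where the chosen admissibility criterion (the vanishing-viscosity / interface entropy condition adopted in this paper) enters: it should guarantee that the traces $u_i^\pm$ from either side of $\mathcal N$ satisfy a one-sided inequality forcing the interfacial dissipation to be nonnegative after pairing the two solutions. Concretely, I would extract the traces of $\chi_{u_1},\chi_{u_2}$ on $\mathcal N$ (using that the $u_i$ are $BV$, hence admit one-sided traces $\Haus{n-1}$-a.e.\ on the rectifiable set $\mathcal N\cup J_{u_i}$), reduce the interface term to a pointwise expression in these traces, and invoke the entropy condition to show this expression is $\le 0$. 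Establishing the existence and good behavior of these traces, and the precise sign computation on $\mathcal N$, is the step I expect to require the most care and to rely most heavily on the structural hypotheses $(H1)$–$(H7)$ and on a careful chain-rule argument for $BV$ functions composed with the discontinuous flux.

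\medskip

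Once the interface term is shown to be nonpositive, the dissipation measures are discarded for their sign, and the transport term integrates away, I would conclude that
\[
\frac{d}{dt}\int_{\R^n}|u_1(t,x)-u_2(t,x)|\,dx\le 0
\]
in the distributional sense, and integrating from $0$ to $T$ yields the stated inequality. Throughout, the $BV$ regularity is what makes the trace analysis and the chain rule rigorous, so I would treat the $BV$ chain rule as the foundational tool underpinning every step.
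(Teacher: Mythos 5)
Your overall strategy coincides with the paper's: pass to the kinetic formulation, measure the $L^1$ distance through $\int(\chi_{u_1}-\chi_{u_2})^2\,dv\,dx$, and reduce the contraction estimate to the sign of an interface term on $\mathcal N$. However, there are two genuine gaps at exactly the point you call the technical heart. First, your claim that the contributions of the kinetic measures $m_1,m_2$ ``carry the correct sign and can be discarded'' fails here. After mollifying in $v$ and pairing the two solutions, the dissipation enters through cross terms of the form $\int (\delta_{u_2^\pm}\ast\varphi_\epsilon)\,\psi_R\,dm_{1\epsilon}$ (and symmetrically with the roles of $u_1,u_2$ exchanged), which come with the unfavorable sign; positivity of $m_1$ only allows one to drop its absolutely continuous part, while the part of $m_1$ concentrated on $(0,T)\times\mathcal N$ --- which must be computed explicitly from the entropy/kinetic equivalence and the nonautonomous $BV$ chain rule --- has to be retained. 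In the paper these retained pieces, together with a commutator remainder $\Resto$ (whose vanishing is itself a nontrivial DiPerna--Lions-type estimate, since the kinetic vector field $\a$ is only $BV$/measure-valued in $x$), are what combine into the interface quantity $\W(u_1,u_2)$. Your decomposition into ``discardable dissipation'' plus ``boundary terms caused by the discontinuity of $\A$'' misses that the boundary terms are largely built out of the dissipation measures themselves.

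Second, ``invoke the entropy condition to show this expression is $\le 0$'' is not an invocation but the entire content of the uniqueness proof. One must first convert the entropy inequality on $\mathcal N$ into pointwise germ inequalities for the traces: plug in the Kruzkov entropies $|v-c|$, use the Rankine--Hugoniot relation $A^+(x,u^+)=A^-(x,u^-)$ on $((0,T)\times\mathcal N)\cap J_u$, and distinguish whether the representative $\uh$ lies inside or outside the interval $I(u^-,u^+)$; this produces the twelve one-sided inequalities of the paper's Subcases 1a--2c. Then the sign of $\W(u_1,u_2)$ is verified by an exhaustive case analysis over the relative orderings of $u_1^\pm$, $u_2^\pm$, $\uh_1$, $\uh_2$ (six configurations, several of which require chaining two germ inequalities through the intermediate value $\uh_1$ or $\uh_2$, e.g.\ $A^-(u_2^-)\le A^-(\uh_1)\le A^-(u_1^-)$). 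This is precisely where the vanishing-viscosity admissibility criterion and the everywhere-defined representatives $\uh_i$ are actually used; without carrying it out, the interfacial term has no sign and the argument does not close.
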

Let us mention that  our assumptions are satisfied in the particular case of fluxes of the form  \(\A(x,u)=\widehat \A (k(x),u)\) where \(k\) is in  \(SBV(\R^n; \R^N)\cap L^\infty(\R^n; \R^N) \) and \(\widehat \A\) is smooth, see Remark \ref{lincei}.

On the other hand we do not investigate the issue of existence of \(BV\) entropy solutions. Let us point out  that, unlike the case of fluxes with smooth dependence on \(x\),  when \(\A\) is discontinuous there is no general known result concerning existence of \(BV\) solutions, 
see however Remark \ref{rmk:BV}.

Our method of proof is based on the notion
of kinetic solution
introduced by Lions, Perthame and Tadmor
(see \cite{LPT,Pert1,Pertbook}).
This technique
has been successfully applied 
by Dalibard 
to the case of nonautonomous smooth fluxes
(see \cite{Dal,Dal2}).
More precisely,
we show the equivalence between 
the entropic and the kinetic formulation
of \eqref{f:equa}.
Relying on the nonautonomous chain rule
for $BV$ functions proved in \cite{ACDD}
and on a careful analysis of the entropy
dissipation along the discontinuities of the flux,
we are then able to prove that the 
semigroup of solutions is contractive
in the $L^1$ norm.

\bigskip
The plan of the paper is the following.
In Section~\ref{sec:prel} we state the assumptions
on the flux (see (H1)--(H7) below)
and we discuss some preliminary results.
In Section~\ref{s:formulation}
we introduce our notions of entropy and kinetic solution,
we prove their equivalence
(see Theorem~\ref{equivalence}),
and we state our main uniqueness result
(see Theorem~\ref{thm:uniquenessentropic}).
In Section~\ref{s:prelim} we prove some
preliminary estimates for kinetic solutions,
and, finally,
in Section~\ref{s:uniqueness}
we prove the uniqueness of kinetic solutions
(see Theorem~\ref{uniqueness}).

\bigskip
\textsc{Acknowledgments.}
The authors would like to thank Luigi Ambrosio
and Gianluca Crippa 
for some useful discussions,
and two referees for having improved, with their comments,
	the presentation of the paper.
The authors have been supported by the Gruppo Nazionale per l'Analisi Matematica, 
la Probabilit\`a e le loro Applicazioni (GNAMPA) of the Istituto Nazionale di Alta Matematica (INdAM).

\section{Assumptions on the vector field and the chain rule}\label{sec:prel}
In this section we first survey some useful facts about \(BV\) function that we need in the sequel, we  state our main structural hypotheses  on the vector field (assumptions (H1)-(H7) below)  and prove some consequences of these assumptions.

\subsection{BV functions}
Let us start  recalling  our main notation and preliminary facts on
$BV$ and \(SBV\) functions. A general reference is Chapter 3 of \cite{AFP}, and
occasionally we will give more precise references therein.

We denote by $\Leb{n}$ the Lebesgue measure in $\R^n$ and by
$\Haus{k}$ the $k$-dimensional Hausdorff measure. By Radon measure we mean a
nonnegative Borel measure finite on compact sets. If \(\mu\) is a Radon measure on \(X\) and \(\nu\) is a Radon measure on \(Y\)
we denote by \(\mu\times \nu\) the the product measure on \(X\times Y\), sometimes we will also write \(\mu_x\times \nu_y\). Given a Borel set \(D\) we will denote by \(\mu\res D\) the Radon measure given by \(\mu\res D(B)=\mu(B\cap D)\).
A set \(\Sigma\subset \R^n\) is said to be  countably $\Haus{n-1}$
rectifiable if $\Haus{n-1}$-almost all of $\Sigma$ can be covered by
a sequence of $C^1$ hypersurfaces. Let us recall that if \(\Sigma\subset \R^n\) is  countably $\Haus{n-1}$
rectifiable, then \((a,b)\times \Sigma\subset \R\times \R^n\) is countably $\Haus{n}$
rectifiable and 
\[
\Haus{n}\res (a,b)\times \Sigma=\Leb{1}\res (a,b)\times \Haus{n-1}\res \Sigma,
\]
see \cite[Theorem 3.2.23]{Fed}.

A function  $u\in L^1(\R^n)$ belongs to $BV(\R^n)$ if its derivative
in the sense of distributions is representable by a vector-valued
measure $Du=(D_1u,\ldots,D_nu)$ whose total variation $|Du|$ is
finite, i.e.
$$
\int_{\R^n}u\frac{\partial\phi}{\partial x_i} dx=-\int\phi\,D_iu
\qquad\forall\phi\in C^\infty_c(\R^n),\,\,i=1,\ldots,n
$$
and $|Du|(\R^n)<\infty$. %
%

\noindent {\bf Approximate continuity and jump points.} We say that
$x\in\R^n$ is an approximate continuity point of $u$ if, for some
$z\in\R$, it holds
$$
\lim_{r\downarrow 0}\mean{B_r(x)}|u(y)-z| dy=0.
$$
The number $z$ is uniquely determined at approximate continuity
points and denoted by $\tilde{u}(x)$, the so-called approximate
limit of $u$ at $x$. The complement of the set of approximate
continuity points, the so-called singular set of $u$, will be
denoted by $S_u$.

Analogously, we say that $x$ is a jump point of $u$, and we write
$x\in J_u$, if there exists a unit vector $\nu\in{\bf S}^{n-1}$ and
$u^+,\,u^-\in\R$ satisfying $u^+\neq u^-$ and
$$
\lim_{r\downarrow 0}\mean{B^\pm(x,r)}|u(y)-u^\pm| dy=0,
$$
where $B^\pm(x,r):=\{y\in B_r(x): \pm\langle y-x,\nu\rangle\geq 0\}$
are the two half balls determined by $\nu$. At points $x\in J_u$ the
triplet $(u^+,u^-,\nu)$ is uniquely determined up to a permutation
of $(u^+,u^-)$ and a change of sign of $\nu$; for this reason, with
a slight abuse of notation, we do not emphasize the $\nu$ dependence
of $u^\pm$ and $B^\pm(x,r)$. Since we impose $u^+\neq u^-$, it is
clear that $J_u\subset S_u$, moreover, for every \(u\in BV_{\rm loc}\),
\(\H^{n-1}(S_u\setminus J_u)=0\) and  \(J_u\)    is 
$\Haus{n-1}$ countably rectifiable
(see \cite[Theorem~3.78]{AFP}). 
Finally, we  define the precise representative as 
\begin{equation}\label{eq: def*}
u^*(x)=
\begin{cases}
\tilde u (x)\qquad &x\in \R^n\setminus S_u\\
\big(u^+(x)+u^-(x)\big)/2 &x\in J_u\\
0&\textrm{otherwise.}
\end{cases}
\end{equation}
Note that  \(\H^{n-1}(\{\widetilde u\ne u^*\}\setminus J_u)=0\), in particular, since \(|Du|\aac \H^{n-1}\),  \(|\widetilde Du|(\{\widetilde u\ne u^*\})=0\).

\medskip
\noindent {\bf Decomposition of the distributional
derivative.}
For any oriented and countably $\Haus{n-1}$-rectifiable
set $\Sigma\subset\R^n$ we have
\begin{equation}\label{rappDjSigma}
Du\res \Sigma=(u^+-u^-)\nu_\Sigma\Haus{n-1}\res\Sigma.
\end{equation}

For any $u\in BV(\R^n)$, we can decompose $Du$ as the sum of a
diffuse part, that we shall denote $\widetilde{D}u$, and a jump part,
that we shall denote by $D^ju$. The diffuse part is characterized by
the property that $|\widetilde{D}u|(B)=0$ whenever $\Haus{n-1}(B)$ is
finite, while the jump part is concentrated on a set $\sigma$-finite
with respect to $\Haus{n-1}$. The diffuse part can be then split as
\[
\widetilde{D} u= D^a u+D^c u
\]
where $D^a u$ is the absolutely continuous part with respect to the
Lebesgue measure, while $D^c u$ is the so-called Cantor part. The
density of $Du$ with respect to $\Leb n$ can be represented as
follows
\[
D^a u= \nabla u \, d\Leb n,
\]
where $\nabla u$ is the approximate gradient of $u$, see \cite[Proposition 3.71 and Theorem 3.83]{AFP}. Note also that \(|\widetilde  D u|(\{\widetilde u\ne u^*\})=0\).
The jump part can be easily computed
by taking $\Sigma=J_u$ (or, equivalently, $S_u$) in
\eqref{rappDjSigma}, namely
\[
D^j u=Du\res J_u=(u^+-u^-)\nu_{J_u}\Haus{n-1}\res J_u.
\]
We will say that \(u\in SBV(\R^n)\) if \(D^c u=0\), i.e. if 
\[
Du=(u^+-u^-)\nu_{J_u}\Haus{n-1}\res J_u + \nabla u \Leb{n}.
\]
All these concepts and results extend, mostly arguing component by
component, to vector-valued $BV$ functions, see \cite{AFP} for
details.

\medskip
\noindent {\bf Sets of finite perimeter and coarea formula.}
We will say that a measurable set \(E\) is of finite perimeter if its characteristic function \(\chi_E\) belongs to \(BV(\R^n)\). In this case we denote by
\[
\partial^* E =J_{\chi_E}
\]
its \emph{reduced boundary} (note that this is slightly larger than what it is usually called  reduced  boundary, however it coincides with it up to a  \(\H^{n-1}\) negligible set, see \cite[Chapter 3]{AFP}). Then,
\[
D\chi_E=D^j\chi_E=-\nu_{\partial^* E}\H^{n-1}\res \partial^*E,
\]
in particular \(\chi_E\in SBV\).
We also recall the coarea formula: if \(u\in BV(\R^n)\),  then for \(\Leb{1}\) almost every \(v\in \R\), the characteristic function of the set $\{u > v\}$ belongs to \(BV\) and we have the following equalities between measures:
\[
\begin{split}
Du&=\int_{\R} D\chi_{u>v} dv \\
|Du|&=\int_{\R}|D\chi_{\{u>v\}}| dv=\int_{\R}\H^{n-1}(\partial^* \{u>v\}) dv\,.
\end{split}
\]
The following lemma relates the  pointwise behavior  of  \(\chi_{\{u>v\}}\) to the pointwise behavior of \(u\),  see \cite[Lemma 2.2]{DCFV2} for the proof.
\begin{lemma} 
For any function $u\colon \R^n\to\R$ and any $v\in\R$, let $\varphi_{u, v}(x)=\chi_{\{u>v\}}(x)$. 
If $u\in BV(\R^n)$, then for $\L^1$-a.e. $v\in\R$ there
exists a Borel set $N_v\subset\R^n$, with $\H^{n-1}(N_v)=0$, such that 
the following relation holds:
\[
\varphi_{u^{\pm}, v} (x) = \varphi_{u,v}^{\pm}(x),
\qquad
\forall x\in\R^n\setminus N_v.
\]
\end{lemma}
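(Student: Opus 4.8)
The plan is to verify the asserted identity pointwise, for $\H^{n-1}$-almost every $x$ and $\L^1$-almost every $v$, and then to collect the (finitely described) exceptional points into the set $N_v$. Writing $E_v:=\{u>v\}$, so that $\varphi_{u,v}=\chi_{E_v}$ and $\varphi_{u,v}^\pm$ are the one-sided traces of $\chi_{E_v}$, I would first use that $\H^{n-1}(S_u\setminus J_u)=0$ to reduce the verification to two classes of points: the approximate continuity points $x\in\R^n\setminus S_u$, where $u^+(x)=u^-(x)=\tilde u(x)$, and the jump points $x\in J_u$. For each fixed $v$ for which $E_v$ has finite perimeter (which holds for $\L^1$-a.e.\ $v$ by the coarea formula) I would invoke Federer's theorem to split $\R^n$, up to an $\H^{n-1}$-negligible set, into the points of density $1$ for $E_v$, the points of density $0$, and the reduced boundary $\partial^* E_v=J_{\chi_{E_v}}$, on which $\varphi_{u,v}^+=1$ and $\varphi_{u,v}^-=0$.

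Next I would dispose of the two \emph{generic} cases. If $x\notin S_u$ and $\tilde u(x)\neq v$, the definition of the approximate limit gives immediately that $E_v$ has density $1$ at $x$ when $\tilde u(x)>v$ and density $0$ when $\tilde u(x)<v$; in either case $\varphi_{u,v}^+(x)=\varphi_{u,v}^-(x)=\chi_{\{\tilde u>v\}}(x)=\varphi_{u^\pm,v}(x)$. If instead $x\in J_u$ and $v\notin\{u^-(x),u^+(x)\}$, then using the two half-ball traces $u^\pm$ one checks directly that $\chi_{E_v}$ is, near $x$, either approximately continuous (when $v$ lies outside the interval with endpoints $u^\pm(x)$) or has a jump with one-sided values $\chi_{\{u^+>v\}}$ and $\chi_{\{u^->v\}}$ (when $v$ lies strictly between them, after aligning the orientation $\nu_{J_u}$ with $\nu_{\partial^* E_v}$). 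In both situations this reads exactly $\varphi_{u,v}^\pm(x)=\varphi_{u^\pm,v}(x)$.

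It then remains to absorb into $N_v$ the points left over, which are of two kinds: (a) the jump points $x\in J_u$ with $v\in\{u^-(x),u^+(x)\}$, and (b) the approximate continuity points $x$ with $\tilde u(x)=v$ at which $E_v$ has density $0$ or $1$. For (a) I would push the $\sigma$-finite measure $\H^{n-1}\res J_u$ forward under the Borel maps $x\mapsto u^\pm(x)$; the resulting measures on $\R$ are again $\sigma$-finite and hence carry at most countably many atoms, so for all but countably many $v$ the set $\{x\in J_u:u^+(x)=v\text{ or }u^-(x)=v\}$ is $\H^{n-1}$-null. For the ``touching'' part (b) I would argue by a Fubini/coarea argument in the product $\R^n\times\R$: since the diffuse part $\widetilde D u$ charges no set of finite $\H^{n-1}$ measure and $\int_\R \H^{n-1}(\partial^*\{u>v\})\,dv=|Du|(\R^n)<\infty$, the level sets $\{\tilde u=v\}$ meet the density-$0$ and density-$1$ regions of $E_v$ in an $\H^{n-1}$-null set for $\L^1$-a.e.\ $v$.

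The main obstacle is the remaining portion of case (b): the \emph{transversal level-crossing set} $\partial^*\{u>v\}\setminus J_u$, where $u$ is approximately continuous yet $\chi_{E_v}$ genuinely jumps. There one has $u^+(x)=u^-(x)=v$, so reconciling the upper trace $\varphi_{u,v}^+(x)=1$ with $\varphi_{u^+,v}(x)$ forces one to track how the strict inequality in the definition $\varphi_{w,v}=\chi_{\{w>v\}}$ interacts with the exact boundary value $v$ and with the orientation $\nu_{\partial^* E_v}$. This is precisely the reason the identity is asserted only for $\L^1$-a.e.\ $v$, and controlling it \emph{uniformly in $x$} for such $v$ — by integrating the defect in $v$ and invoking the coarea formula recalled above — is the delicate step on which the whole argument turns.
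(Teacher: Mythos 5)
The paper itself contains no proof of this lemma: it is quoted verbatim from \cite[Lemma~2.2]{DCFV2}, so your argument can only be measured against that reference's strategy and against correctness. The parts of your proposal that are carried out are fine: the two generic cases (points of approximate continuity with $\tilde u(x)\neq v$, and jump points with $v\notin\{u^-(x),u^+(x)\}$) are handled correctly, and so is your case (a): since $\H^{n-1}\res J_u$ is $\sigma$-finite, the disjoint sets $\{x\in J_u:\ u^+(x)=v\ \text{or}\ u^-(x)=v\}$ can have positive $\H^{n-1}$-measure for at most countably many $v$, so those levels can be discarded.

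The genuine gap is the step you yourself call ``the delicate step on which the whole argument turns'': you never prove anything about the transversal crossing set $T_v:=\partial^*\{u>v\}\setminus S_u$, and the strategy you propose for it --- show that it is $\H^{n-1}$-negligible for a.e.\ $v$ by integrating the defect in $v$ and invoking the coarea formula --- provably cannot work. The coarea formula gives exactly the opposite conclusion: since $|D^ju|$ is concentrated on $J_u$ and $|\widetilde{D}u|(J_u)=0$, one has
\begin{equation*}
\int_\R \H^{n-1}\bigl(\partial^*\{u>v\}\setminus J_u\bigr)\,dv=|\widetilde{D}u|(\R^n),
\end{equation*}
so $T_v$ has \emph{positive} $\H^{n-1}$-measure for a set of levels $v$ of positive Lebesgue measure whenever the diffuse part of $Du$ is nonzero. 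Concretely, for $u(x)=\max(0,1-|x|)$ one has $\partial^*\{u>v\}=\partial B_{1-v}(0)$ for every $v\in(0,1)$, and every point of this sphere is a point of approximate continuity of $u$ with $\tilde u=v$; there $\varphi_{u^{\pm},v}(x)=\chi_{\{v>v\}}=0$, while the one-sided traces of $\chi_{\{u>v\}}$ are $1$ and $0$. Thus $T_v$ can never be absorbed into an $\H^{n-1}$-null set $N_v$, and any correct proof must confront what the identity means at these points (this is precisely where the set-up and conventions of \cite{DCFV2} have to enter; with the conventions of the present paper, $u^\pm=\tilde u$ off $J_u$, these points are genuinely exceptional), rather than discard them as small. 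A secondary flaw in your case (b): the density-$0$ points need no exclusion at all (both sides vanish there), whereas for the density-$1$ ``touching'' points your Fubini/coarea sentence is an assertion, not a proof --- ruling those out for a.e.\ $v$ requires a real argument (already in one dimension it amounts to showing that, for a.e.\ level, touching points are strict local minima, whose values form a countable set). So the proposal settles the easy cases, but on the one case that makes the lemma delicate it is both incomplete and committed to an unworkable strategy.
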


\subsection{Structural assumptions on the vector field}
  Let \(\A\in L^\infty (\R^n\times \R;\R^n)\) be such that:
\begin{enumerate}
\item[(H1)] There exists a set \(\mathcal C_{\A}\) with \(\Leb{n}(\mathcal C_{\A})=0\) such that 
 \(\A(x,\cdot)\in C^1(\R)\) for every \(x\in \R^n\setminus  \mathcal C_{\A}\) and \(\A(\cdot,v)\in SBV(\R^n)\) for every \(v\in \R^n\).
\item[(H2)]There exists a constant \(M\) such that
 \[
 |\partial_v \A(x, v)|\le M\qquad  \forall \, x\in \R^n\setminus \mathcal C_{\A},\quad v\in \R.
 \]

\item[(H3)]There exists a modulus of continuity \(\omega\) such that 
\[
|\partial_v\A (x, u)-\partial _v \A (x,w)|\le \omega(|u-w|)\qquad \forall\, x\in \R^n\setminus\mathcal  C_{\A},\quad u\,,w \in \R.
\]
\item[(H4)]There exists a function \(g_1\in L^1(\R^n)\)  such that  
\[
\begin{split}
|\nabla_x \A(x,u)-\nabla_x \A(x,w)|&\le g_1(x)|u-w|
\qquad
 \forall\, x\in \R^n\setminus\mathcal  C_{\A},\quad u\,,w \in \R,
\end{split}
\] 
where \(\nabla_x \A(x,v)\)
denotes the approximate gradient of the map \(x\mapsto \A(x,u)\).
 \item[(H5)] The measure
\[
\sigma := \bigvee_{u\in \R} |D_x \A(\cdot, u)|
\]
satisfies \(\sigma(\R^n)<\infty\). Here \(D_x \A(\cdot, u)\) is the distributional gradient of the map  \(x\mapsto \A(x,u)\) (which is a measure since \(\A(\cdot ,u)\in BV\)) and \(\bigvee\) denotes the least upper bound in the space of nonnegative Borel measures, see \cite[Definition 1.68]{AFP}.
\end{enumerate}

\subsection{Chain rule and fine properties of \(\A\)}\label{subsec:chainrule}

 Assumptions (H1)-(H5) imply that  \(\A\) satisfies the hypothesis of \cite{ACDD}. Let us summarize some consequence of this fact. If we define 
\[
\mathcal N=\Big\{x\in \R^n: \liminf_{r \to 0} \frac{\sigma(B_r(x))}{r^{n-1}}>0\Big\},
\]
then \(\mathcal N\) is a \(\H^{n-1}\)  rectifiable set. In the sequel we shall assume that:
\begin{itemize}
\item[(H6)]
$\H^{n-1}(\mathcal N)<+\infty$.
\end{itemize}

In our context, Theorem~2.2 in \cite{ACDD} reads as follows: For every \(u\in BV(\R^n;\R)\) the composite function \(w(x)=\A(x ,u(x))\) belongs to  \(BV(\R^n;\R^n)\) with
\begin{equation}\label{eq:stima}
|Dw|\le \sigma+M|Du|
\end{equation}
and 
\begin{align}
\label{eq:chainrule1}
\widetilde {D }w&=\nabla_x \A (x,\widetilde u(x))\Leb{n}+\partial_v \A (x,\widetilde u(x))\otimes \widetilde {D} u
\\
\label{eq:chainrule2}
D ^j w&=\big[\A^+(x,u^+(x))-\A^{-}(x,u^-(x))\big]\otimes \nu_{J_u\cup \mathcal N}\,\mathcal H^{n-1}\res (J_u\cup \mathcal N).
\end{align}
Here the functions \(\A^{\pm}(x,v)\) are defined for \(\H^{n-1}\) almost every \(x\in \mathcal N\cup J_u\) and \emph{every} \(v\in \R\) as 
\begin{equation}\label{eq:defApm}
\A^{\pm}(x,v)=\lim_{r\to 0} \mean{B^\pm _r(x)} \A(y,v) dy,
\end{equation}
in particular
\[
\A^+(x,v)=\A^-(x,v)\qquad \textrm{\(\H^{n-1}\)-a.e. \(x\in J_u\setminus \mathcal N\) and every \(v\in \R\).} 
\]
Note that by interchanging the derivative with the integral, applying Ascoli-Arzel\`a Theorem and taking into account  (H2) and (H3), we can deduce as in \cite[Section 3]{ACDD} that the map \(v\mapsto \A^{\pm}(x,v)\) is \(C^1\) for almost every \(x\in \mathcal N\) with derivative given by \(\partial_v \A^{\pm}(x,v)=(\partial_v \A(x,v))^{\pm}\) (it is part of the statement the fact that this last quantity is well defined). In particular for  \(\H^{n-1}\)-a.e. \(x\in\mathcal N\) and all \(u\,,w \in \R\) we have
\begin{equation}\label{modulopm}
|(\partial_v\A)^{\pm} (x, u)-(\partial _v \A )^{\pm}(x,w)|\le \omega(|u-w|)\,.
\end{equation}

 In the same way, see \cite[Section  3]{ACDD},   for \(\H^{n-1}\) almost every point of \(\R^n\setminus \mathcal N\) and every \(v\in \R\)  there exists the limit
\[
\widetilde \A(x,v)=\lim_{r\to 0} \mean{B_r(x)} \A(y,u)\,dy,
\]
and the map \(v\mapsto \widetilde \A(x,v)\) is continuously differentiable with \(\partial_v \widetilde \A(x,v)= \widetilde {\partial_v \A}(x,v)\). Moreover for \(\mathcal L^{n}\)-a.e. \(x\)  and all \(u\,,w \in \R\)
\begin{equation}\label{modulopreciso}
|\widetilde{\partial_v\A} (x, u)-\widetilde{\partial _v \A }(x,w)|\le \omega(|u-w|)\,.
\end{equation}

We conclude this section with the following simple remark. Thanks to (H4) and the previous discussion, the functions
\[
\boldsymbol B_h(x,v)=\frac{\A(x,v+h)-\A(x,v)}{h}
\]
satisfy
\begin{equation}\label{eq:unibound}
|\boldsymbol B_h(x,v)|\le M, \qquad |D_x \boldsymbol B_h (\cdot, v)|\leq g_1(x)\L^n +M\,\H^{n-1}\res \mathcal N\,,
\end{equation}
where the first inequality follows from (H2). 
Using (H3) one can show that  \( \boldsymbol B_h(x,v)\to \partial_v \A(x,v)\) for almost every \(x\) and \emph{every} \(v\in \R\), see \cite[Section 3]{ACDD} for similar arguments. From \eqref{eq:unibound}  we deduce that \(\partial_v \A(\cdot, v)\in SBV\). Let us now consider the decomposition
\[
D_x  \partial_v \boldsymbol A(\cdot,v)=\nabla_x  \partial_v \A(\cdot,v)\Leb{n}+\big((\partial_v\A)^+(\cdot,v)-(\partial_v\A)^-(\cdot,v)\big)\otimes \nu _{\mathcal N}\, \H^{n-1}\res \mathcal N.
\]
According to the  discussion  below equation \eqref{eq:defApm} we have \((\partial_v\A)^\pm=\partial_v (\A^\pm)\) for \(\H^{n-1}\res \mathcal N\) almost every \(x\) and every \(v\). Moreover,  by (H4) for every \(v\)  the family
\[
h \mapsto \frac{\nabla_x \A(x,v+h)-\nabla_x \A(x,v)}{h}\,,\qquad
h\neq 0,\]
is weakly compact in \(L^1\) and every one of its cluster points has to coincide with \(\nabla_x \partial_v \A(x,v)\), i.e.
\[
\nabla_x \partial_v \A(x,v)= w_{L^1}-\lim_{h \to 0} \frac{\nabla_x \A(x,v+h)-\nabla_x \A(x,v)}{h}.
\]
Let us now define for every Borel and bounded function \(\varphi\) the maps
\[
 h_1(v):= \int \varphi(x)\nabla_x \A(x,v)\, dx,   \quad h_2(v):= \int_{\mathcal N} \varphi(x)(\A^+(x,v)-\A^-(x,v))\otimes \nu _{\mathcal N} \,d\H^{n-1}(x).
\]
By the previous discussion we then see that \(h_1,h_2\) are Lipschitz continuous and everywhere differentiable with derivatives given by
\begin{equation}\label{eq:scambio}
\begin{split}
\frac{dh_1(v)}{dv}&= \int \varphi(x)\nabla_x \partial_v \A(x,v)\, dx
\\
 \frac{dh_2(v)}{dv}&=\int_{\mathcal N} \varphi(x)\Big(\partial_v \A^+(x,v)-\partial_v \A^-(x,v)\Big)\otimes \nu _{\mathcal N} \,d\H^{n-1}.
 \end{split}
\end{equation}
However in the sequel we will also need the following assumption, ensuring the continuity of the map \(v\to dh_1/dv\):
\begin{enumerate}
\item[(H7)]
 There exist a \(L^1\) function \(g_2\) and a modulus of continuity \(\omega\) (which we can assume without loss of generality to be equal to the one appearing  in (H3)) such that 
\begin{equation*}
\big|\nabla_x \partial_v \A(x,u)-\nabla_x \partial_v \A(x,w)|\le g_2(x) \omega(|u-w|)\qquad \forall \, u, w\in \R.
\end{equation*}
\end{enumerate}
With this assumption we have
\begin{lemma}\label{lemma scambio}
Let \(\A\) satisfy (H1)--(H7), then there exists a set \(\widetilde {\mathcal C}_{\A}\) with \(\Leb{n}(\widetilde {\mathcal C}_{\A})=0\) such that  every \(x\in \R^n\setminus \widetilde {\mathcal C}_{\A}\) is a Lebesgue point for \(x\mapsto \nabla_x  \A(x,v)\), \(x\mapsto \nabla_x \partial_v \A(x,v)\) and for any such \(x\) the map \(v\mapsto \widetilde{\nabla_x \A}(x,v)\) is \(C^1\) with derivative given by \(\widetilde{\nabla_x  \partial_v \A}(x,v)\)
\end{lemma}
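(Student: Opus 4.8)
The plan is to produce the exceptional set from a countable family of Lebesgue‑point requirements, to promote the Lebesgue‑point property to \emph{every} $v$ by means of the $v$‑continuity estimates (H4) and (H7), and finally to identify the $v$‑derivative of the precise representative by transferring the identity $h_1(v_2)-h_1(v_1)=\int_{v_1}^{v_2}(dh_1/ds)\,ds$ from the $L^1$‑functions to their precise representatives. Throughout set $F(x,v):=\nabla_x\A(x,v)$ and $G(x,v):=\nabla_x\partial_v\A(x,v)$; by the discussion preceding the statement, $F(\cdot,v),G(\cdot,v)\in L^1(\R^n)$ for every $v$, while (H4) and (H7) read $|F(x,u)-F(x,w)|\le g_1(x)|u-w|$ and $|G(x,u)-G(x,w)|\le g_2(x)\,\omega(|u-w|)$ with $g_1,g_2\in L^1(\R^n)$.

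First I would fix a countable dense set $\{v_k\}\subset\R$ and let $\widetilde{\mathcal C}_{\A}$ be the union of $\mathcal C_{\A}$, of the sets of non‑Lebesgue points of $g_1$ and $g_2$, and of the sets of non‑Lebesgue points of $F(\cdot,v_k)$ and $G(\cdot,v_k)$; this is $\Leb{n}$‑null. Fixing $x\notin\widetilde{\mathcal C}_{\A}$ and averaging (H4) over $B_r(x)$, then letting $r\to0$, gives $|\widetilde F(x,v_k)-\widetilde F(x,v_j)|\le\widetilde{g_1}(x)|v_k-v_j|$, so $v_k\mapsto\widetilde F(x,v_k)$ is Lipschitz and extends to a Lipschitz map $\widetilde F(x,\cdot)$ on $\R$; similarly (H7) produces an extension $\widetilde G(x,\cdot)$ with modulus $\widetilde{g_2}(x)\,\omega$. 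For arbitrary $v$, choosing rationals $v_k\to v$ and bounding $\mean{B_r(x)}|F(y,v)-\widetilde F(x,v)|\,dy$ by $|v-v_k|\,\mean{B_r(x)}g_1\,dy$ plus the Lebesgue‑point average at $v_k$ plus $\widetilde{g_1}(x)|v_k-v|$, then letting $r\to0$ and $v_k\to v$, shows that $x$ is a Lebesgue point of $F(\cdot,v)$ with value $\widetilde F(x,v)$; the same argument applies verbatim to $G$.

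It remains to identify the derivative. For bounded Borel $\varphi$ the map $h_1$ is Lipschitz with $h_1'(s)=\int\varphi(x)G(x,s)\,dx$ by \eqref{eq:scambio}, so the fundamental theorem of calculus together with Fubini (legitimate since $(x,s)\mapsto\varphi(x)G(x,s)$ is integrable on $\R^n\times[v_1,v_2]$, by the bound $|G(x,s)|\le|G(x,v_1)|+g_2(x)\,\omega(|s-v_1|)$) yields, for all $v_1<v_2$,
\[
\int\varphi(x)\big[F(x,v_2)-F(x,v_1)\big]\,dx=\int\varphi(x)\int_{v_1}^{v_2}G(x,s)\,ds\,dx.
\]
Since $\varphi$ is arbitrary, $F(x,v_2)-F(x,v_1)=\int_{v_1}^{v_2}G(x,s)\,ds$ for $\Leb{n}$‑a.e.\ $x$; enlarging $\widetilde{\mathcal C}_{\A}$ by the countably many exceptional sets indexed by rational pairs $(v_1,v_2)$, we may assume this holds for every $x\notin\widetilde{\mathcal C}_{\A}$. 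Using that (by Fubini and dominated convergence, exploiting the previous step and the same bound on $G$) the precise representative of $x\mapsto\int_{v_1}^{v_2}G(x,s)\,ds$ equals $\int_{v_1}^{v_2}\widetilde G(x,s)\,ds$, and taking precise representatives of both sides, I would obtain $\widetilde F(x,v_2)-\widetilde F(x,v_1)=\int_{v_1}^{v_2}\widetilde G(x,s)\,ds$ for all rational $v_1,v_2$, hence, by continuity in $(v_1,v_2)$, for all real ones. As $\widetilde G(x,\cdot)$ is continuous, the fundamental theorem of calculus gives that $\widetilde F(x,\cdot)=\widetilde{\nabla_x\A}(x,\cdot)$ is $C^1$ with derivative $\widetilde G(x,\cdot)=\widetilde{\nabla_x\partial_v\A}(x,\cdot)$.

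The delicate point is this last step: passing the pointwise identity from the $L^1$‑functions $F(\cdot,v_i)$ and $G(\cdot,\cdot)$ to their precise representatives, i.e.\ showing that the Lebesgue value of $x\mapsto\int_{v_1}^{v_2}G(x,s)\,ds$ is $\int_{v_1}^{v_2}\widetilde G(x,s)\,ds$. This is exactly where the uniform‑in‑$s$ integrability furnished by (H7) and the Lebesgue‑point property of $g_2$ are needed, so that dominated convergence can commute the averaging in $x$ with the integration in $s$.
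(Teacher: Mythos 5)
Your proof is correct, and its first half coincides with the paper's: the same exceptional set, built from a countable dense set of values $v_k$ together with the non-Lebesgue points of $g_1$ and $g_2$ (your formulation is in fact the accurate one --- the paper's displayed definition of $\widetilde{\mathcal C}_{\A}$ has the complements misplaced), and the same density argument using (H4) and (H7) to upgrade the Lebesgue-point property to \emph{every} $v$, a step the paper compresses into ``arguing as in \cite[Section 3]{ACDD}''. Where you genuinely differ is the identification of the derivative. The paper stays at the level of ball averages: applying \eqref{eq:scambio} with $\varphi$ the normalized indicator of $B_r(x)$, the functions $h_r(v)$, defined as the average of $\nabla_x\A(\cdot,v)$ over $B_r(x)$, are differentiable in $v$ with $h_r'(v)$ equal to the average of $\nabla_x\partial_v\A(\cdot,v)$; then (H7) together with the Lebesgue-point property of $g_2$ makes the family $\{h_r'\}_r$ equicontinuous, and the classical theorem on limits of $C^1$ functions (pointwise convergence of $h_r$, locally uniform convergence of $h_r'$) concludes at once. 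You instead integrate first: the fundamental theorem of calculus for the Lipschitz map $h_1$, Fubini, and the arbitrariness of $\varphi$ give the a.e.\ identity $\nabla_x\A(x,v_2)-\nabla_x\A(x,v_1)=\int_{v_1}^{v_2}\nabla_x\partial_v\A(x,s)\,ds$, which you then evaluate at Lebesgue points on both sides --- your dominated-convergence step, sound precisely because of the uniform bound $|\nabla_x\partial_v\A(x,s)-\nabla_x\partial_v\A(x,v_1)|\le g_2(x)\,\omega(|s-v_1|)$ --- and finally differentiate the resulting identity between precise representatives. Both routes spend (H7) plus the Lebesgue point of $g_2$ on the same interchange (the $r\to 0$ limit against the $v$-integration): the paper's version is shorter, while yours is more self-contained, avoiding Ascoli--Arzel\`a and the convergence-of-derivatives theorem at the cost of the bookkeeping with rational pairs $(v_1,v_2)$ and the a.e.-to-everywhere upgrade.
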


\begin{proof} Let \(U\subset \R\) be a countable dense set and let
\[
\widetilde {\mathcal C}_{\A}=\bigcup_{u\in U} \R^n\setminus S_{\A(\cdot,u)}\cup (\R^n\setminus S_{g_1}) \cup (\R^n\setminus S_{g_2}),
\]
which clearly satisfies \(\Leb{n}(\widetilde{ \mathcal C}_{\A})=0\). By arguing as in \cite[Section 3]{ACDD} and using (H4) and (H7) we see that for every \(x\in \R^n \setminus \widetilde{ \mathcal C}_{\A}\) the limits
\[
\widetilde{\nabla_x \A}(x,v)= \lim_{r\to 0}
\mean{B_r(x)}\nabla_x \A(y,v)dy\qquad \widetilde{\nabla_x \partial_v \A}(x,v) =\lim_{r\to 0}\mean{B_r(x)}\nabla_x \partial_v \A(y,v)dy
\]
exist for every \(v\in \R\). By the first equality in \eqref{eq:scambio} the map
\[
v\mapsto h_r(v)= \mean{B_r(x)}\nabla_x \A(y,v)\,dy
\]
are differentiable with derivative given by \(h'_r(v)=\meantext{B_r(x)}\nabla_x \partial_v \A(y,v)\,dy\). Since \(x\) is a Lebesgue point for \(g_2\), thanks to (H7) this  is a family of equi-continuous functions in \(v\) converging to \(\widetilde{\nabla_x \partial \A}(x,v)\). It is now a standard argument to see that \(\widetilde{\nabla_x \A}(x,v)=\lim_{r} h_r(v)\) is \(C^1\) with derivative given by \(\lim_r h_r'(v)=\widetilde{\nabla_x \partial_v \A}(x,v)\).
\end{proof}

\begin{remark}\label{lincei}
Let us  point out that our hypotheses include (and actually are modeled on) the case \(\A(x,u)=\widehat \A (k(x),u)\) where \(k\in SBV(\R^n; \R^N)\cap L^\infty(\R^n; \R^N) \), \(\H^{n-1}(J_{k})<+\infty\) and 
\(\widehat \A\in C^1(\R^N\times \R,\R^n)\cap{\rm Lip} (\R^N\times \R,\R^n) \).
\end{remark}

\section{Formulation of the problem}
\label{s:formulation}

\subsection{Entropic formulation}
We consider the following scalar conservation law
\begin{equation}\label{f:scalar}
u_t +\Div \A(x,u)=0\,,
\end{equation}
where
$\A\colon\R^n\times\R\to\R^n$ satisfies the structural assumption (H1)--(H7).

\begin{definition}[Convex entropy pair]
We say that $(S,\eeta)$ is a convex entropy pair
if $S\in C^2(\R)$ is a convex function,
and $\eeta = (\eta_1, \ldots, \eta_n)$ is defined by
\begin{equation}\label{f:eta}
\eta_{i}(x,v):=\int_{0}^{v}\partial_v A_{i}(x,w)S'(w)dw\,,
\qquad i=1,\ldots,n.
\end{equation}
\end{definition}
In the above definition and in the sequel, 
\[A_i=\A\cdot e_i\,,
\]
 are the components of \(\A\).

Note that according to the discussion in Section \ref{subsec:chainrule}, \(\eeta(\cdot, v) \in SBV(\R^n;\R^n)\) for every \(v\in \R\) and its distributional derivative is given by
\[
\begin{split}
D_x\eeta(\cdot,v)&=\left(\int_0^v \nabla_x\partial_v \A(x,w)S'(w)dw\right) \Leb{n}\\
&+\left(\int_0^v \big(\partial_v \A^+(x,w)-\partial_v \A^-(x,w)\big)S'(w)dw \right) \otimes \nu_{\mathcal N} \,d\mathcal H^{n-1}\res \mathcal N\,.
\end{split}
\]

\begin{definition}[Entropy solutions]\label{d:entrsol}
A function 
\[
u\in C([0,T]; L^1(\R^n)) \cap L^{\infty}((0,T)\times \R^n)\cap L^1((0,T);BV(\R^n))
\]
is an entropy solution of \eqref{f:scalar}
if 
$u$ is a solution to \eqref{f:scalar} in the sense of distributions,
and there exists a  (everywhere defined) Borel representative
$\uh$ of $u$ with \(|\uh(t,x)|\le \|u\|_{\infty}\) such that, for every 
convex entropy pair $(S,\eeta)$,
one has
\begin{equation}\label{f:distr}
\begin{split}
\partial_t S(u)&+\Div \big(\eeta(x,u)\big)\\
&-\Div \eeta(x,v)\Big|_{v=\uh(t,x)}+S'(\uh)\Div \A(x,v)\Big|_{v=\uh(t,x)}\le 0
\end{split}
\end{equation}
in  the distributional sense. Here, by  \(\Div \A(x,v)\big|_{v=\uh(t,x)}\)
we mean the measure whose action on a bounded and Borel function \(\varphi=\varphi(t,x)\) is given by 
\begin{multline}\label{def:misuraresto}
\sum_{i=1}^n \int _0^T dt \int_{\R^n} \varphi(t,x) \nabla_{i} A_i(x,\uh(t,x)) \, dx\\
 +\sum_{i=1}^n \int _0^T dt \int_{\mathcal  N} \varphi(t,x)\big(A_i^+(x,\uh(t,x))-A_i^+(x,\uh(t,x))\big)\nu^{i}_{\mathcal N}\, d\H^{n-1}(x),
\end{multline}
and the same for \( \Div \eeta(x,v)\big|_{v=\uh(t,x)}\).
\end{definition}

\begin{remark}\label{rmk:commenti}
Some comments about our definition of entropy solution
are in order.
We shall see in Section~\ref{s:analysis} that,
if $x\in\mathcal{N}$, then our entropy condition
characterizes the (closure of the) so-called vanishing viscosity germ defined in equation (5.4) of \cite{AKR}
(see \eqref{f:EC3} below).
This characterization has also been used by
Diehl
(see Condition $\Gamma$ in \cite{Die}),
Mitrovic \cite{Mitr} and Andreianov \& Mitrovic \cite{AnMitr}.
In this sense, despite the appearance of a somewhat
arbitrary Borel representative $\uh$,
this definition seems to be the natural extension
to our general framework of the conditions
cited above.
We also note that, 
although in Definition~\ref{d:entrsol} we require
$\uh$ to be defined everywhere,
it will be  clear by our arguments below, that it is enough to define $\uh$
\(\Leb{1}\times(\Leb{n}+\sigma)\)-a.e.
\end{remark}

\begin{remark}
\label{rmk:BV}
By definition, if $u$ is an entropy solution 
to~\eqref{f:scalar}, then $u(t,\cdot)\in BV(\R^n)$ for
every $t$.
For regular fluxes  with respect to the space variables, it is well known that  the
entropy  solution to the Cauchy problem with initial data
$u_0\in BV(\R^n)$ remains in $BV$ for all times
(see \cite{Kruz,CoMeRo}). On the other hand, it is not clear when such regularity
has to be expected for entropy solutions
when the flux is discontinuous in the space variables.
A relevant situation for which we can expect
$BV$-regularity of solutions is the case $n=1$, at least
for a class of fluxes widely studied in the
literature 
(see for example \cite{ADGV,GNPT} and references therein).
For instance, if we assume that the flux is piecewise constant
and that, at every point of discontinuity,
it satisfies an appropriate version of the crossing condition, 
one can show the existence of a $BV$ solution
of the Cauchy problem
(see \cite[Theorem~2.13]{ADGV} and \cite[Lemma~9]{GNPT}).
\end{remark}

With these definitions at hand we can now restate our main result:

\begin{theorem}\label{thm:uniquenessentropic}   Let \(\A\in L^\infty (\R^n\times \R;\R^n)\) satisfies \((H1)\)--\((H7)\)  and let  \(u_1\) and \(u_2\) be two entropy solutions of \eqref{f:scalar}, then
\begin{equation}\label{eq:contrattivo}
\int_{\R^n}|u_1(T,x)-u_2(T,x)|\,dx\le \int_{\R^n}|u_1(0,x)-u_2(0,x)|\,dx.
\end{equation}
\end{theorem}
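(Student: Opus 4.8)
The plan is to pass to the kinetic formulation of Lions--Perthame--Tadmor and to prove the contraction at the kinetic level, which reduces \eqref{eq:contrattivo} to a statement not requiring a doubling of the spatial variable. For an entropy solution $u$ I would introduce the kinetic function
\[
f(t,x,v)=\chi(v,\uh(t,x)),\qquad
\chi(v,\xi)=\one_{\{0<v<\xi\}}-\one_{\{\xi<v<0\}},
\]
which satisfies $\int_{\R}|f(t,x,v)|\,dv=|\uh(t,x)|$ and, for two solutions, $\int_{\R}|f_1-f_2|\,dv=|u_1-u_2|$ pointwise in $(t,x)$. Testing the entropy inequality \eqref{f:distr} with convex entropies $S$ whose second derivative approximates $\delta_v$, and rewriting the flux terms by means of the chain-rule decomposition \eqref{eq:chainrule1}--\eqref{eq:chainrule2}, one shows that $f$ solves, in the distributional sense on $(0,T)\times\R^n\times\R$,
\[
\partial_t f+\partial_v\A(x,v)\cdot\nabla_x f=\partial_v m+\Resto,
\]
where $m=m(t,x,v)\ge0$ is a nonnegative dissipation measure and $\Resto$ collects the lower-order terms generated by the explicit $x$-dependence of $\A$, namely the diffuse contribution \eqref{eq:chainrule1} and the jump contribution \eqref{eq:chainrule2} carried by $\Haus{n-1}\res\mathcal N$. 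Establishing this equivalence, with the sign $m\ge0$, is the first step I would isolate; it is a direct but delicate manipulation of \eqref{f:distr} using the fine structure of $\A$ recorded in Section~\ref{subsec:chainrule}.

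Given two solutions with kinetic functions $f_1,f_2$ and measures $m_1,m_2\ge0$, I would compute $\frac{d}{dt}\int_{\R^n}\int_{\R}|f_1-f_2|\,dv\,dx$. The decisive feature that makes the kinetic method suited to a flux which is merely $BV$ in $x$ is that $f_1$ and $f_2$ are transported by the \emph{same} field $\partial_v\A(x,v)$; hence no doubling of the $x$ variable (and no attendant loss of regularity of $\A$) is needed, and it suffices to regularize in the velocity variable $v$ alone. After such a regularization, the transport term is, away from $\mathcal N$, a spatial divergence and integrates to zero, while the right-hand side produces, through the Perthame sign computation associated with $\sign(f_1-f_2)$, a nonpositive contribution involving the dissipation measures $m_1$ and $m_2$. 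Removing the regularization and integrating in time would then give \eqref{eq:contrattivo}, provided the two residual terms---the commutators created by the $v$-regularization and the $\mathcal N$-part of $\Resto$---are shown to be harmless.

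The main obstacle is the analysis on the flux-discontinuity set $\mathcal N$. There the velocity field $\partial_v\A(x,v)$ jumps, so the integration by parts that annihilates the transport term off $\mathcal N$ leaves a surface integral over $(0,T)\times\mathcal N$ weighted by $\A^+-\A^-$ as in \eqref{eq:chainrule2}, and it is precisely the sign of this integral that must be controlled. I expect the right tool to be the entropy condition built into Definition~\ref{d:entrsol} on $\mathcal N$, i.e.\ the closure of the vanishing-viscosity germ alluded to in Remark~\ref{rmk:commenti}: it forces the one-sided traces $\uh^\pm$ of the two solutions across $\mathcal N$ to belong to an admissible one-dimensional germ, and a careful case analysis---as in the one-dimensional theory---should show that the corresponding entropy dissipation renders the $\mathcal N$-integral nonpositive. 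The remaining, purely technical, difficulty is to estimate the $v$-mollification commutators uniformly; this is where the continuity assumptions (H3) and (H7) on $\partial_v\A$ and $\nabla_x\partial_v\A$, together with the $L^1$ bounds $g_1,g_2$, enter and guarantee that the commutators vanish as the regularization is removed.
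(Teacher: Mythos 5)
Your proposal follows essentially the same route as the paper: pass to the kinetic formulation (the paper's Theorem~\ref{equivalence}), mollify in the velocity variable only so that no doubling in $x$ is needed, control the mollification commutators via (H2)--(H4) and (H7) (Proposition~\ref{fepsilon}), and reduce the contraction to the sign of a surface term on $(0,T)\times\mathcal N$, which is then handled by the vanishing-viscosity-germ case analysis of the entropy condition (Section~\ref{s:analysis} and the proof of Theorem~\ref{uniqueness}). Your bookkeeping differs only cosmetically (the signed LPT function $\chi$ and a residual $\Resto$ in place of the paper's divergence-free measure field $\a$ and the square trick $|f_1-f_2|=(f_{1\epsilon}-f_{2\epsilon})^2$ in the limit), so the plan is correct and matches the paper's proof.
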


\begin{remark}Since we are dealing with bounded solutions, one can suitably localize assumptions (H1)--(H7) in the ``vertical'' variable \(v\), see for instance \cite{ACDD}. Furthermore one can also localize in the space variable \(x\) by just requiring for instance  that  \(\A(\cdot,v)\)  belongs to \(SBV_{\rm loc}\) and similarly for \(u\). Since this will not add any new ideas to the proof below,  we leave this generalization to the interested reader. Moreover by exploiting standard techniques in the context of hyperbolic equation the following localized version of \eqref{eq:contrattivo} can be easily obtained from the proof of 
Theorem~\ref{thm:uniquenessentropic}:
\[
\int_{B_R(0)}|u_1(T,x)-u_2(T,x)| \,dx\le \int_{B_{R+VT}(0)}|u_1(0,x)-u_2(0,x)|\,dx \qquad \forall\, R\geq 0,
\]
where \(V:=\|\A\|_{\infty}\).
\end{remark}

Since  \(u\in L^1\big((0,T);BV(\R^n)\big)\), by the discussion in Section~\ref{subsec:chainrule}, see  \eqref{eq:stima} in particular, we have that
\[
\A(x,u(t,x))\in L^1\big((0,T);BV(\R^n;\R^n)).
\]
By arguing for instance as in \cite[Theorem 4.3.1]{Daf} we then deduce the following:
\begin{lemma} 
Let \(u\) be an entropy solution of \eqref{f:scalar}, then 
\[
u\in BV((0,T)\times\R^n).
\]
\end{lemma}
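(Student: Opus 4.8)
The plan is to verify directly that each first-order distributional partial derivative of \(u\), regarded as a function on the space--time cylinder \(Q:=(0,T)\times\R^n\), is represented by a finite Radon measure. Since \(u\in L^1(Q)\) (the map \(t\mapsto\|u(t,\cdot)\|_{L^1(\R^n)}\) is bounded because \(u\in C([0,T];L^1(\R^n))\)), and since the total variation of a vector-valued measure is controlled by the sum of the total variations of its components, this is exactly what is needed to conclude \(u\in BV(Q)\).

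For the \(n\) spatial directions I would only use the assumption \(u\in L^1((0,T);BV(\R^n))\). For a.e.\ \(t\) the slice \(u(t,\cdot)\) lies in \(BV(\R^n)\), the map \(t\mapsto D_xu(t,\cdot)\) is a measurable family of \(\R^n\)-valued measures, and \(\int_0^T|D_xu(t,\cdot)|(\R^n)\,dt<+\infty\). A standard disintegration (Fubini) argument then shows that \(\Leb1\res(0,T)\otimes D_xu(t,\cdot)\) is a finite \(\R^n\)-valued measure on \(Q\) representing the distributional spatial derivative of \(u\), with total variation equal to the above integral.

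The time derivative is where the equation enters. Testing \eqref{f:scalar} against \(\phi\in C^\infty_c(Q)\) gives \(\int_Q u\,\partial_t\phi = -\int_Q \A(x,u)\cdot\nabla_x\phi\). Since \(u\in L^1((0,T);BV(\R^n))\), the estimate \eqref{eq:stima} yields \(w(t,\cdot):=\A(\cdot,u(t,\cdot))\in BV(\R^n;\R^n)\) for a.e.\ \(t\) with \(|D_xw(t,\cdot)|(\R^n)\le \sigma(\R^n)+M\,|Du(t,\cdot)|(\R^n)\); integrating in \(t\), \(\A(\cdot,u)\in L^1((0,T);BV(\R^n;\R^n))\). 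Hence for a.e.\ \(t\) one may integrate by parts in \(x\), \(\int_{\R^n}\A(x,u(t,x))\cdot\nabla_x\phi\,dx=-\int_{\R^n}\phi\,d[\Div_x\A(x,u(t,\cdot))]\), so that \(\int_Q u\,\partial_t\phi=\int_Q\phi\,d\mu\) where the disintegrated measure \(\mu:=\Leb1\res(0,T)\otimes \Div_x\A(x,u(t,\cdot))\) is finite, with \(|\mu|(Q)\le T\,\sigma(\R^n)+M\int_0^T|Du(t,\cdot)|(\R^n)\,dt<+\infty\). This identifies the distributional time derivative \(\partial_t u=-\mu\) as a finite measure.

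Putting the three pieces together shows that the full distributional gradient \((\partial_t u,\nabla_x u)\) is a finite \(\R^{n+1}\)-valued measure, which is the assertion. The only genuine work is the measurability/Fubini bookkeeping needed to assemble the slicewise measures \(D_xu(t,\cdot)\) and \(\Div_x\A(x,u(t,\cdot))\) into Radon measures on \(Q\) that truly represent the distributional derivatives, together with the justification of the slicewise integration by parts; these are standard, and I expect no essential difficulty beyond them (this is precisely the content of the argument in \cite[Theorem~4.3.1]{Daf}). An equivalent route, closer to the cited reference, is to bound the difference quotients of \(u\): the spatial ones by \(|h|\int_0^T|Du(t,\cdot)|(\R^n)\,dt\) and, using \(\partial_t u=-\Div_x\A\), the temporal ones by \(|h|\,|\mu|(Q)\), and then to invoke the characterization of \(BV\) by uniformly bounded translations.
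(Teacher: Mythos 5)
Your proof is correct and follows essentially the same route as the paper: the paper likewise observes that \eqref{eq:stima} gives \(\A(\cdot,u)\in L^1((0,T);BV(\R^n;\R^n))\) and then invokes the argument of \cite[Theorem~4.3.1]{Daf}, which is exactly the slicewise integration-by-parts/Fubini argument you carry out to identify \(\partial_t u=-\Div_x\A(x,u)\) as a finite measure. You have simply made explicit the bookkeeping that the paper delegates to the citation.
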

Obviously we can think of  \(\A(\cdot, v)\) as a function in \(SBV((0,T)\times\R^n;\R^n)\) constant in time, hence  equations \eqref{f:scalar} and \eqref{eq:chainrule2} give the following Rankine-Hugoniot conditions
\begin{multline*}
 \big(u^+-u^-)\nu_t +\big(\A^+(x,u^+)-\A^-(x,u^-)\big)\cdot \nu_x=0\\
  \qquad\textrm{for \(\H^{n}\)-a.e. \((t,x)\in ((0,T)\times \mathcal N)\cup J_u\),}
\end{multline*}
where \(\nu=(\nu_t,\nu_x)\) is the normal to the  \(\H^{n}\) rectifiable  set   \(((0,T)\times \mathcal N)\cup J_u\subset (0,T)\times \R^n\). 
In particular, since for \(\H^{n}\) almost every \((t,x)\) in \(((0,T)\times \mathcal N)\cap J_u\) we have \(\nu=(0,\nu_\mathcal N)\),
we obtain
\begin{equation}
\label{eq:RH2}
A^+(x,u^+)=A^-(x,u^-) \qquad\textrm{for \(\H^{n}\)-a.e. \((t,x)\in ((0,T)\times \mathcal N)\cap J_u\),}
\end{equation}
where we have introduced the notation
\begin{equation}\label{shortahand1}
A^{\pm}(x,v)=\A^\pm(x,v)\cdot \nu_{\mathcal N}\qquad v\in \R.
\end{equation}

\subsection{Analysis of the entropy condition on discontinuities.}
\label{s:analysis}
Let $u$ be an entropy solution of \eqref{f:scalar}. Thanks to \eqref{eq:chainrule2} and \eqref{def:misuraresto}, on  \((0,T)\times \mathcal N\) the entropy inequality  \eqref{f:distr} reads as 
\begin{equation}
\label{f:EC}
\eta^+(x,u^+)-\eta^+(x,\uh)+A^+(x,\uh) S'(\uh) \leq
\eta^-(x,u^-)-\eta^-(x,\uh)+A^-(x,\uh) S'(\uh),
\end{equation}
\(\H^n\) almost everywhere on \((0,T)\times \mathcal N\) and  we have dropped the dependence on \((t,x)\) from \(u^\pm\) and \(\uh\) in order to simplify the notations. Here we understand that  $u^{\pm} = \widetilde u$ if \(x\notin J_u\) and  we are using the short hand notations  \eqref{shortahand1} and 
\[
\eta^{\pm}(x, v) := \eeta^{\pm}(x,v)\cdot \nu_{\mathcal N}(x).
\]
Thanks to the definition of \(\eeta\), \eqref{f:eta}, and \eqref{eq:scambio}, condition \eqref{f:EC} can be rewritten as
\begin{multline}\label{f:EC0.5}
A^+(x, u^+) S'(u^+) - \int_{\uh}^{u^+} A^+(x,v) S''(v)\, dv\\
 \le A^-(x, u^-) S'(u^-) - \int_{\uh}^{u^-} A^-(x,v) S''(v)\, dv\,.
\end{multline}
By a standard approximation argument,  we can plug in \eqref{f:EC0.5} the  Kruzkov--type entropies
\[
S(v) := |v-c|,
\qquad
\eta_i(v) := \int_0^v \partial_v A_i(x,w) \sign(w-c)\, dw,
\]
where $c\in\R$ is a constant. We then obtain \(\H^n\) almost everywhere on \((0,T)\times \mathcal N\)
\begin{multline}\label{f:EC1}
A^+(x,u^+) \sign(u^+-c)  - 2\sign(u^+ - \uh) A^+(x,c) \one_{(\uh, u^+)}(c) 
\\  \leq
A^-(x,u^-) \sign(u^--c)-2\sign(u^- - \uh) A^-(x,c) \one_{(\uh, u^-)}(c). 
\end{multline}
Here and in the sequel,  for \(a,b\in \R\)
the symbol $\one_{(a,b)}$ will denote the characteristic function
of the open  interval $I(a,b)$ with endpoints \(a\) and \(b\), i.e.\ \(I(a,b)=(a,b)\) if \(a<b\) or \(I(a,b)=(b,a)\) if \(b < a\).

We shall now derive as a consequence of \eqref{f:EC1} some inequalities which will be useful in the last part of the proof of Theorem \ref{thm:uniquenessentropic}. To this end, let us now fix a point \((t,x)\in (0,T)\times \mathcal N\) for which \eqref{f:EC1} and \eqref{eq:RH2} are  valid and let   $I(u^-,u^+)$ be  the open  interval with endpoints $u^-(t,x)$ and $u^+(t,x)$. Let us consider the following two cases:

\begin{itemize}
\item[(a)] $\uh\in I(u^-,u^+)$ 
\item[(b)] $\uh\notin I(u^-,u^+)$\,.
\end{itemize}
In the case (a), taking into account the
Rankine--Hugoniot condition \eqref{eq:RH2},
by (\ref{f:EC1}) we get
\begin{multline}\label{f:EC2}
 \left[\sign(u^+-c) - \sign(u^--c)\right] A^+(u^+)
\\ 
 \leq
2 \sign(u^+-u^-) \left[A^+(c) \one_{(\uh, u^+)}(c)
+ A^-(c) \one_{(\uh, u^-)}(c)\right]
\end{multline}
where we have dropped the dependence on \(x\) from \(A\). This condition gives information only for $c\in I(u^-,u^+)$:
\begin{multline}\label{f:EC3}
 \sign(u^+-u^-) A^+(u^+)
\\
 \leq
\sign(u^+-u^-) \left[A^+(c) \one_{(\uh, u^+)}(c)
+ A^-(c) \one_{(\uh, u^-)}(c)\right],
\qquad c\in I(u^-,u^+).
\end{multline}
As particular cases, taking
$c\nearrow \uh$ and $c\searrow \uh$ we get
\[
\sign(u^+ - u^-) A^+(u^+) \leq
\sign(u^+ - u^-) A^{\pm}(\uh).
\]

Similarly, in the case (b), taking again into account the
Rankine--Hugoniot condition \eqref{eq:RH2},
by (\ref{f:EC1}) we get
\begin{multline}\label{f:EC5}
 \left[\sign(u^+-c) - \sign(u^--c)\right] A^+(u^+)
\\  \leq
2 \sign(u^+-\uh) \left[A^+(c) \one_{(\uh, u^+)}(c)
- A^-(c) \one_{(\uh, u^-)}(c)\right]\,,
\qquad
c\not\in I(u^-,u^+).
\end{multline}
We will  now analyze conditions \eqref{f:EC2} and \eqref{f:EC5} in all the possible cases of different positions of $u^-$, $u^+$, $\uh$ and $c$ (see \cite{Mitr} for a similar analysis).  We list all the cases for reader's convenience. First of all we remark that if $c\geq \max\{u^+,u^-,\uh\}$ or $c\leq \min\{u^+,u^-,\uh\}$, then, by the Rankine--Hugoniot condition \eqref{eq:RH2},  condition \eqref{f:EC1} does not give any information. Therefore we list all other possible cases:
\bigskip

Case 1: $u^+\leq u^-$.

Subcase 1a: $u^+\leq u^-\leq \uh$
\begin{equation} \label{Subcase1a(i)}
(i)\quad u^+\leq u^-\leq c\leq \uh\quad \Rightarrow\quad A^+(c)\leq A^-(c)
\end{equation}
\begin{equation} \label{Subcase1a(ii)}
(ii)\quad u^+\leq c\leq u^-\leq \uh\quad \Rightarrow\quad A^+(c)\leq A^+(u^+)\,.
\end{equation}

\bigskip
Subcase 1b: $u^+\leq \uh\leq u^-$
\begin{equation} \label{Subcase1b(iii)}
(iii)\quad u^+\leq \uh\leq c\leq u^-\quad \Rightarrow\quad A^-(c)\leq A^-(u^-)
\end{equation}
\begin{equation} \label{Subcase1b(iv)}
(iv)\quad u^+\leq c\leq \uh\leq u^-\quad \Rightarrow\quad A^+(c)\leq A^+(u^+)\,.
\end{equation}

\bigskip
Subcase 1c: $\uh\leq u^+\leq  u^-$
\begin{equation} \label{Subcase1c(v)}
(v)\quad \uh\leq u^+\leq c\leq  u^-\quad \Rightarrow\quad A^+(c)\leq A^+(u^+)
\end{equation}
\begin{equation} \label{Subcase1c(vi)}
(vi)\quad \uh\leq c\leq u^+\leq  u^-\quad \Rightarrow\quad A^-(c)\leq A^+(c)\,.
\end{equation}

\bigskip
Case 2: $u^-\leq u^+$.

Subcase 2a: $u^-\leq u^+\leq \uh$
\begin{equation} \label{Subcase2a(i)}
(i)\quad u^-\leq u^+\leq c\leq \uh\quad \Rightarrow\quad A^+(c)\leq A^-(c)
\end{equation}
\begin{equation} \label{Subcase2a(ii)}
(ii)\quad u^-\leq c\leq u^+\leq \uh\quad \Rightarrow\quad A^-(u^-)\leq A^-(c)\,.
\end{equation}

\bigskip
Subcase 2b: $u^-\leq \uh\leq u^+$
\begin{equation} \label{Subcase2b(iii)}
(iii)\quad u^-\leq \uh\leq c\leq u^+\quad \Rightarrow\quad A^+(u^+)\leq A^+(c)
\end{equation}
\begin{equation} \label{Subcase2b(iv)}
(iv)\quad u^-\leq c\leq \uh\leq u^+\quad \Rightarrow\quad A^-(u^-)\leq A^-(c)\,.
\end{equation}

\bigskip
Subcase 2c: $\uh\leq u^-\leq  u^+$
\begin{equation} \label{Subcase2c(v)}
(v)\quad \uh\leq u^-\leq c\leq  u^+\quad \Rightarrow\quad A^+(u^+)\leq A^+(c)
\end{equation}
\begin{equation} \label{Subcase2c(vi)}
(vi)\quad \uh\leq c\leq u^-\leq  u^+\quad \Rightarrow\quad A^-(c)\leq A^+(c)\,.
\end{equation}

\subsection{Kinetic formulation}
Let us define the function $\chi\colon\R^2\to\R$,
\begin{equation}\label{defchi}
\chi(v,u):=
\begin{cases}
1
&\text{if $v<u$},\\
1/2
&\text{if $v=u$},\\
0
&\text{if $u<v$}.
\end{cases}
\end{equation}
Note that if \(u\in BV((0,T)\times \R^n)\) then for almost every \(v\in \R\) the function $(t,x) \mapsto \chi(v,u(t,x))$ belongs to $SBV_{loc}$.

Let  $\A$ satisfy conditions (H1)--(H7), and let us define
\begin{align*}
&a_i(x,v) := \partial_v A_i(x,v)&&\forall i=1,\dots,n\\
&a_{n+1}(\cdot,v) :=-\sum_{i=1}^n{\partial_i A_i}(\cdot,v)\,.
\end{align*}
We remark that, for every $v\in\R$, $a_i(\cdot,v)$ is a $BV$ function, while $a_{n+1}(\cdot,v)$ is a Radon measure, moreover according to (H5)
\begin{equation}\label{compact}
\sup_{v}|a_{n+1}(\cdot,v)|(\R^n)\le \sigma(\R^n)<+\infty\,.
\end{equation}
Let us denote by
\begin{equation*}
\a(\cdot,v):=(a_1(\cdot,v)\,\L^n_x,\dots,a_n(\cdot,v)\,\L^n_x,a_{n+1}(\cdot,v)).
\end{equation*}
Note that \(\a\) is a Radon measure and that
\(
\Div_{x,v} \a=0.
\)

\begin{definition}[Kinetic solutions]
\label{d:kinetic}
A function 
\[
u\in C([0,T]; L^1(\R^n)) \cap L^{\infty}((0,T)\times \R^n)\cap L^1((0,T);BV(\R^n))
\]
is a kinetic solution of \eqref{f:scalar}
if 
$u$ is a solution to \eqref{f:scalar} in the sense of distributions,
and there exists a  (everywhere defined) Borel representative
$\uh$ of $u$ with \(|\uh(t,x)|\le \|u\|_{\infty}\) and a positive measure
$m(t,x,v)$  with 
$m((0,T)\times\R^{n+1}) < +\infty$ such that  the function
$(t,x,v)\mapsto \chi(v, \uh(t,x))$ satisfies
\begin{equation}\label{f:kin}
\partial_t \chi(v,\uh(t,x))+\Div_{x,v}[\a(x,v)\chi(v,\uh(t,x))]
=\partial_{v}(m(t,x,v))
\end{equation}
in the sense of distributions.
\end{definition}


Our first results establishes the equivalence between Definitions~\ref{d:entrsol} and~\ref{d:kinetic}.
\begin{theorem}\label{equivalence}
Let  
\[
u\in C([0,T); L^1(\R^n)) \cap L^{\infty}((0,T)\times \R^n)\cap L^1((0,T);BV(\R^n)).
\]
Then $u$ is an entropy solution to \eqref{f:scalar} if and only if it is a kinetic solution to \eqref{f:scalar}.
\end{theorem}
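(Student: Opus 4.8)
The plan is to deduce both implications from a single distributional identity relating the $v$-average of the kinetic operator to the entropy defect. Write $f(t,x,v):=\chi(v,\uh(t,x))$ and $R:=\|u\|_\infty$, so that $f(t,x,\cdot)\equiv1$ on $(-\infty,-R)$ and $f(t,x,\cdot)\equiv0$ on $(R,+\infty)$; all $v$-integrals below are thus effectively over the compact interval $[-R,R]$. Since $u$ is an entropy (resp.\ kinetic) solution we already know $u\in BV((0,T)\times\R^n)$, hence $f(\cdot,\cdot,v)\in SBV_{loc}$ for a.e.\ $v$, while $\a(\cdot,v)$ is a Radon measure obeying the uniform bound \eqref{compact}. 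It follows that
\[
\mu:=\partial_t f+\Div_{x,v}\big(\a(x,v)\,f\big)
\]
is a signed Radon measure on $(0,T)\times\R^n\times\R$, supported in $v\in[-R,R]$.

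The heart of the matter is the identity, valid as distributions on $(0,T)\times\R^n$ for every convex $S\in C^2$,
\[
\int_{\R}S'(v)\,d\mu=\partial_t S(u)+\Div\big(\eeta(x,u)\big)-\Div\eeta(x,v)\big|_{v=\uh}+S'(\uh)\,\Div\A(x,v)\big|_{v=\uh},
\]
whose right-hand side is exactly the left-hand side of the entropy inequality \eqref{f:distr}. To prove it I would first rewrite the flux term in transport form: since $\Div_{x,v}\a=0$, the Leibniz (chain) rule for $BV$ functions gives
\[
\Div_{x,v}\big(\a\,f\big)=\sum_{i=1}^n a_i(x,v)\,\partial_{x_i}f+a_{n+1}(x,v)\,\partial_v f .
\]
Then I multiply by $S'(v)$ and integrate in $v$. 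The time term yields $\partial_t\!\int_{-\infty}^u S'(v)\,dv=\partial_t S(u)$. Using $\partial_v\chi(v,u)=-\delta_{v=u}$, the last term collapses to $-S'(u)\,a_{n+1}(x,u)=S'(\uh)\,\Div\A(x,v)|_{v=\uh}$, where the value at $v=u$ is read off through the everywhere-defined representative $\uh$ and interpreted via \eqref{def:misuraresto}. Finally, writing $\int S'(v)\,a_i(x,v)\,f\,dv=\eta_i(x,u)+\int_{-\infty}^0 S'\partial_vA_i\,dv$ and commuting $\partial_{x_i}$ with the $v$-integral, the chain rule \eqref{eq:chainrule1}--\eqref{eq:chainrule2} together with \eqref{eq:scambio} identifies the spatial contribution $\sum_i\int S'a_i\partial_{x_i}f\,dv$ with $\Div(\eeta(x,u))-\Div\eeta(x,v)|_{v=\uh}$, the $x$-dependent additive constants combining with the chain-rule cross-terms to produce exactly the frozen divergence.

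Granting the identity, both implications follow from a standard extraction argument. If $u$ is a kinetic solution then $\mu=\partial_v m$ with $m\ge0$, so $\int S'(v)\,d\mu=-\int S''(v)\,dm\le0$ because $S''\ge0$; by the identity this is precisely \eqref{f:distr}, so $u$ is an entropy solution. Conversely, if $u$ is an entropy solution the identity gives $\langle\mu,S'(v)\,\psi\rangle\le0$ for every convex $S$ and every test function $\psi\ge0$. Approximating the Kruzkov entropies $S(v)=(v-c)^+$, for which $S'=\one_{(c,\infty)}$, yields $\int_c^{+\infty}d\mu\le0$ in $(t,x)$ for every $c\in\R$, while the affine choices $S(v)=\pm v$ (equivalently, the distributional equation \eqref{f:scalar}) force $\int_{\R}d\mu=0$. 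Hence the $v$-primitive
\[
m(t,x,v):=\int_{-\infty}^v d\mu=-\int_v^{+\infty}d\mu
\]
is a nonnegative measure with $\partial_v m=\mu$, and its total mass is finite by the uniform bounds \eqref{eq:stima} and \eqref{compact}; this $m$ exhibits $u$ as a kinetic solution.

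The main obstacle is the rigorous justification of the key identity on the singular set $(0,T)\times(\mathcal N\cup J_u)$. Away from the discontinuities the computation is the classical Lions--Perthame--Tadmor one, but the two manipulations used above — the $BV$ Leibniz rule yielding the transport form, and the contraction of the measure $a_{n+1}$ (which carries a singular part on $\mathcal N$) against $\partial_v\chi=-\delta_{v=u}$ — must be controlled against the precise jump structure \eqref{eq:chainrule2}. Here one must evaluate $\chi(v,\uh)$ on the jump set through the chosen representative $\uh$, match the one-sided traces $\A^{\pm}(x,v)$ and the normal $\nu_{\mathcal N}$ appearing in \eqref{def:misuraresto}, and invoke the differentiation-under-the-integral formulas \eqref{eq:scambio}, legitimate thanks to (H7) and Lemma~\ref{lemma scambio}, to transfer the $v$-integration past the measure $\H^{n-1}\res\mathcal N$. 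It is this bookkeeping of the jump contributions, rather than any single delicate estimate, that constitutes the crux of the argument.
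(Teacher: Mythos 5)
Your overall architecture is the same as the paper's: your ``key identity'' is precisely the paper's equation \eqref{eq:7} (with $\mu=\partial_v m$, so $\int S'\,d\mu=-\int S''\,dm$), the kinetic-to-entropy direction is then immediate from $m\ge 0$, and for the converse you take the $v$-primitive of the kinetic operator applied to $\chi(v,\uh)$ as the defect measure, which is exactly the paper's definition \eqref{eq:8}; your use of smoothed Kruzkov entropies $(v-c)^+$ in place of the paper's construction of a compactly supported $S$ with $S''=\psi$ on the range of $\uh$ is an equivalent variant, and your observations that $\mu$ vanishes for $|v|>\|u\|_\infty$ and that $\int_\R d\mu=0$ reproduce the paper's support and normalization steps.

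The genuine gap is the one you flag yourself: the key identity is asserted, not proved, and it is the entire mathematical content of the theorem in this setting. Moreover, the route you sketch for it --- passing to the ``transport form'' $\Div_{x,v}(\a f)=\sum_i a_i\,\partial_{x_i}f+a_{n+1}\,\partial_v f$ by a $BV$ Leibniz rule and then contracting $a_{n+1}$ against $\partial_v f=-\delta_{v=\uh}$ --- is exactly the step that is ill-defined here: $a_{n+1}$ carries a singular part on $\mathcal N$, $a_i=\partial_v A_i$ jumps across $\mathcal N$, and $\partial_{x_i}f$, $\partial_v f$ are measures charging $(0,T)\times\mathcal N\times\R$; products of a discontinuous coefficient (or a measure) with a measure concentrated on the same $(n-1)$-dimensional set have no canonical meaning, and different choices of representative give different answers --- this ambiguity is the very reason the entropy formulation needs the frozen-divergence measures \eqref{def:misuraresto} and the representative $\uh$ in the first place. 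The paper never passes to transport form: it keeps \eqref{f:kin} in divergence form, tests with $\phi(t)\varphi(x)S'(v)$, reduces everything by Fubini to one-dimensional integrals in $v$, and integrates by parts \emph{in $v$ only}, using the facts (from (H3), (H7), Lemma~\ref{lemma scambio} and Section~\ref{subsec:chainrule}) that $v\mapsto A^\pm(x,v)$ is $C^1$ for $\H^{n-1}$-a.e.\ $x\in\mathcal N$ and $v\mapsto\nabla_i A_i(x,v)$ is $C^1$ for $\L^n$-a.e.\ $x$, with the expected derivatives; see \eqref{eq:scambio}, \eqref{eq:5}, \eqref{eq:6}. That computation, which you defer as ``bookkeeping'', is the proof. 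Two smaller omissions: extending the identity from $S\in C_c^\infty$ to all convex $C^2$ entropies requires the approximation argument the paper imports from Dalibard, and the finiteness of $m((0,T)\times\R^{n+1})$ does not follow from \eqref{eq:stima} and \eqref{compact} alone --- the paper gets it by integrating \eqref{eq:7} with $S(v)=v^2/2$ on the range of $\uh$, which brings in the $L^2$ norm of the initial datum.
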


\proof 
We divide the proof in two steps.

\medskip
\noindent
{\it Step 1}. Let \(u\) be a kinetic solution and let  \(S\in C_c^\infty(\R)\).
By testing   \eqref{f:kin} with \(\phi(t)\varphi(x)S'(v)\), we then obtain
 \begin{equation}\label{eq:1}
\begin{split}
&\int_{(0,T)\times \R^n\times \R} \phi'(t)\varphi(x)S'(v) \chi(v,\uh(t,x))dtdxdv\\
&+\sum_{i=1}^n \int_{(0,T)\times \R^n\times \R} \phi(t)  \partial_{i} \varphi(x)S'(v) a_{i}(x,v) \chi(v,\uh(t,x))dtdxdv\\
&+\int_{(0,T)\times \R^n\times \R} \phi(t)   \varphi(x)S''(v)  \chi(v,\uh(t,x))\,d a_{n+1}(x,v) dv dt \\
&=\int_{(0,T)\times \R^n\times \R} S''(v)\varphi(x)\phi(t)dm(t,x,v)\,.
\end{split}
\end{equation} 
Now, since \(S\) is compactly supported,
\begin{equation}\label{eq:2}
\int_{\R} S'(v) \chi(v,\uh(t,x)) dv = \int_{-\infty}^{\uh(t,x)} S'(v)dv=S(\uh(t,x))
\end{equation}
and, for \(i=1,\dots,n\), 
\begin{equation}\label{eq:3}
\int_{\R} S'(v) a_{i}(x,v) \chi(v,\uh(t,x)) dv =\int_{-\infty}^{\uh(t,x)} S'(v) \partial_v A_{i}(x,v) dv=\overline{\eta}_i(x,\uh(t,x)),
\end{equation}
where
\[
\overline{\eeta}(x,v) :=
\int_{-\infty}^{v} S'(w) \partial_v \A(x,w) dw
= \eeta(x,v) + 
\int_{-\infty}^{0} S'(w) \partial_v \A(x,w) dw
\]
by the definition of \(\eeta\) in \eqref{f:eta}.
Moreover
\begin{equation}\label{eq:4}
\begin{split}
\int_{ \R^n\times \R} &\varphi(x)S''(v)  \chi(v,\uh(t,x))\,d a_{n+1}(x,v)\, dv\\
= {} & -\sum_{i=1}^n \int_{\R^{n}\times \R}  \varphi(x) S''(v) \chi(v,\uh(t,x)) \nabla_{i} A_i(x,v)\, dv\, dx \\
& - \int_{\mathcal N \times \R}  \varphi(x) S''(v) \chi(v,\uh(t,x)) \big(A^+(x,v)-A^-(x,v)\big)\,  dv\, d\mathcal H^{n-1}(x)\,,
\end{split}
\end{equation}
where we have used the short hand notation \eqref{shortahand1}. Now by the discussion in Section \ref{subsec:chainrule}, the map \(v\mapsto A^{\pm}(x,v)\) is \(C^1\) for \(\H^{n-1}\) almost every \(x\in \mathcal N\) with derivative given by  \(v\mapsto \partial_v A^{\pm}(x,v)\), hence for any such \(x\) 
\begin{equation}\label{eq:5}
\begin{split}
\int_{\R}&S''(v) \chi(v,\uh(t,x)) \Big(A^+(x,v)-A^-(x,v)\Big)  dv
\\ = {} &
\int_{-\infty}^{\uh(t,x)}S''(v) \big(A^+(x,v)-A^-(x,v)\big)  dv
\\ = {} & 
S'(\uh(t,x))\big(A^+(x,\uh(t,x))-A^-(x,\uh(t,x))\big)
\\ & - \int_{-\infty}^{\uh(t,x)}S'(v) \big(\partial_v A^+(x,v)-\partial_v A^-(x,v)\big)\,dv
\\ = {} &
S'(\uh(t,x))\big(A^+(x,\uh(t,x))-A^-(x,\uh(t,x))\big)- \big(\overline{\eta}^+(x,\uh(t,x))-\overline{\eta}^-(x,\uh(t,x))\big)
\end{split}
\end{equation}
where we are using for $\overline{\eeta}$ the same
convention \eqref{shortahand1} used for $\A$ and $\boldsymbol{\eta}$.
 In the same way by Lemma \ref{lemma scambio}, for almost every \(x\in \R^n\) the map \(v\mapsto \nabla_{i}A_i(x,v)\) is \(C^1\) with derivative given by \(  \nabla_{i}\partial_v A_i(x,v)\), hence for any such \(x\)
\begin{equation}\label{eq:6}
\begin{split}
  \int_{\R}&S''(v) \chi(v,\uh(t,x)) \nabla_i A_i(x,v)  dv\\
&= S'(\uh(t,x))\nabla_i A(x,\uh(t,x))- \int_{-\infty}^{\uh(t,x)}S'(v) \nabla_i \partial_v A_i(x,v)dv\\
&= S'(\uh(t,x))\nabla_i A(x,\uh(t,x))- \nabla_i \overline{\eta}_i(x,\uh(t,x)).
\end{split}
\end{equation} 
Combining \eqref{eq:1}, \eqref{eq:2}, \eqref{eq:3}, \eqref{eq:4}, \eqref{eq:5} and \eqref{eq:6}  we deduce that if \(u\) is a kinetic solution of \eqref{f:scalar}, then  for every function \(S\in C_c^\infty(\R)\) we have
\[
\begin{split}
\partial_t S(u)&+\Div \big(\overline\eeta(x,u)\big)\\
&-\Div \overline\eeta(x,v)\Big|_{v=\uh}+S'(\hat u)\Div \A(x,v)\Big|_{v=\uh}=-\int S''(w) dm(\cdot,\cdot, w)
\end{split}
\]
in the sense of distribution. 
We now note that $\overline{\eeta}-\eeta$ is a function
of the $x$ variable only, hence the
above equation implies that
\begin{equation}\label{eq:7}
\begin{split}
\partial_t S(u)&+\Div \big(\eeta(x,u)\big)\\
&-\Div \eeta(x,v)\Big|_{v=\uh}+S'(\hat u)\Div \A(x,v)\Big|_{v=\uh}=-\int S''(w) dm(\cdot,\cdot, w)
\end{split}
\end{equation}
for every $S\in C_c^{\infty}(\R)$.
Using the very same approximation argument
of the second step of the proof of Theorem~3 in \cite{Dal},
we conclude that \eqref{eq:7} holds for every
convex function $S$ of class $C^2$.
Since \(m\geq 0\),
this fact  
implies that \(u\) is an entropy solution of \eqref{f:scalar}.

 \medskip
 \noindent
 {\it Step 2}. Let \(u\) be an entropy solution of \eqref{f:scalar}, let us define the distribution
\begin{equation}\label{eq:8}
\begin{split}
m(t,x,v)&=\partial_t \int_0^v \chi(w,u(t,x))dw\\
&+\sum_{i=1}^n \partial_i\left\{ \int_0^v a_i(x,w) \chi(w,u(t,x))dw \right\}+a_{n+1}(x,v)\chi(v,\uh(t,x)). 
\end{split}
\end{equation}
Clearly \eqref{f:kin} is satisfied in the sense of distributions, hence to conclude that \(u\) is a kinetic solution we only have to show that \(m\) is a positive measure with  \(m((0,T)\times \R^n\times \R)<\infty\). First note that by testing \eqref{eq:8} with \(\phi(t,x)\psi(v)\) with \(\spt \psi \cap [-\|u\|_{\infty},\|u\|_{\infty}]=\emptyset\) and recalling that \(|\uh|\le \|u\|_{\infty}\) we obtain  that, as a  distribution,
\[
m(t,x,v)=\partial_t u+\Div\A(x,u)=0
\quad\text{outside}\ [0,T]\times\R^n\times  [-\|u\|_{\infty},\|u\|_{\infty}],
\]
that is \(\spt m\subset [0,T]\times\R^n\times  [-\|u\|_{\infty},\|u\|_{\infty}]\). 
We want to show that \(m\) is a positive measure, to this end note that  by the same computations of Step 1, \(u\) satisfies \eqref{eq:7} for every \(S\in C_c^\infty(\R)\).
If \(\psi(v)\geq 0\) is a positive test function, since \(m\) vanishes outside  \([0,T]\times\R^n\times  [-\|u\|_{\infty},\|u\|_{\infty}]\) we can construct a  function \(S\in C_c^{\infty}\)  which is convex on the range of \(\uh\) and for which
\[
\int \psi(v) dm(t,x,v)= \int  S''(v) dm(t,x,v).
\]
By \eqref{eq:8} and since \(u\) is an entropy solution we then deduce that
\[
\int \psi(v) dm(t,x,v)\geq 0\qquad \forall \psi\geq 0,
\]
hence \(m\) is a positive measure.  Moreover choosing \(\phi=1\) on \([-\|u\|_{\infty},\|u\|_{\infty}]\) so that \(S(v)=v^2/2\) on the range of \(\uh\) and integrating \eqref{eq:7} we get
\[
\int_{(0,T)\times \R^{n}\times \R} dm(t,x,v)\le \frac1 2 \int_{\R^n} |u(0,x)|^2\,dx+\int_{(0,T)\times \R^{n}\times \R} d|a_{n+1}|\, dt\,.
\]
By \eqref{compact} we finally conclude.
\qed

\section{Preliminary estimates}\label{s:prelim}
In this section we shall prove some preliminary estimates
for approximate solutions that
will be used in Section~\ref{s:uniqueness}.

Given an entropy solution $\uh$,
let us define the function $f(t,x,v) := \chi(v, \uh(t,x))$ and let us consider a regularization of $f$ with respect to
the $v$ variable. 
More precisely,  let $\varphi\in C^\infty_c([-1/2,1/2])$ be  such that \(\varphi(w)=\varphi(-w)\), \(\varphi\geq 0\)  and  \(\int \varphi=1\). Let
\[
\varphi_\epsilon(v)=\frac 1 \epsilon \varphi\left(\frac{v}{\epsilon}\right)
\]
denote the standard family of
mollifiers. If we define
\[
\fe(t,x,v) := (f(t,x,\cdot)\ast\varphi_{\epsilon})(v)=\big(\chi (\cdot,u )\ast \varphi_{\epsilon}\big)(v)\Big|_{u=\uh(t,x)},
\]
then we have the following
\begin{proposition}\label{fepsilon}
The function $\fe$ satisfies the following equation:
\begin{equation}\label{feps}
\partial_t \fe(t,x,v)+
\Div_{x,v}[\a(x,v)\fe(t,x,v)]
=\partial_{v}(m_\epsilon(t,x,v))+r_\epsilon\,,
\end{equation}
where
\[
m_{\epsilon}(t,x,v) := (m(t,x,\cdot)\ast\varphi_{\epsilon})(v),
\]
and the commutator
\[
r_{\epsilon} :=  -\Div_{x,v } \big((\a f)\ast\varphi_{\epsilon}\big)+\Div_{x,v } \big(\a f_{\epsilon}\big)
\]
is a measure on \((0,T)\times \R^n\times \R\) such that 
\begin{equation}\label{eq:zero}
\lim_{\epsilon\to 0} |r_\epsilon|\big((0,T)\times \R^n\times (c,d)\big)=0
\end{equation}
for every $c,d\in\R$.
\end{proposition}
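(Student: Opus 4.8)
The plan is to derive \eqref{feps} by convolving the kinetic equation \eqref{f:kin} in the \(v\)-variable, to read off \(r_\epsilon\) as the associated commutator, and then to devote the real work to showing that this commutator is a measure with infinitesimal mass on slabs \((0,T)\times\R^n\times(c,d)\).

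Since \(u\) is an entropy solution, Theorem~\ref{equivalence} guarantees that \(f(t,x,v)=\chi(v,\uh(t,x))\) solves \eqref{f:kin}. Convolving \eqref{f:kin} against \(\varphi_\epsilon\) in \(v\) and using that \(\partial_t\), \(\partial_v\) and each \(\partial_{x_i}\) commute with convolution in \(v\), I obtain \(\partial_t\fe+\Div_{x,v}\big((\a f)\ast\varphi_\epsilon\big)=\partial_v m_\epsilon\); adding and subtracting \(\Div_{x,v}(\a\fe)\) yields exactly \eqref{feps} with \(r_\epsilon\) as defined in the statement. That \(m_\epsilon\) is a positive finite measure is immediate from \(m\ge0\) and \(\int\varphi_\epsilon=1\), so only the assertions about \(r_\epsilon\) require proof.

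The key simplification is to use \(\Div_{x,v}\a=0\). Writing the Leibniz rule for the divergence of the product of the (divergence-free) measure \(\a\) with the bounded function \(\fe\), and using that convolution in \(v\) commutes with \(\Div_{x,v}\), I rewrite
\[
r_\epsilon=\a\cdot\nabla_{x,v}\fe-\big(\a\cdot\nabla_{x,v}f\big)\ast\varphi_\epsilon .
\]
Here \(\a\cdot\nabla_{x,v}\fe=\sum_{i=1}^n a_i\,\partial_{x_i}\fe+a_{n+1}\,\partial_v\fe\) is a measure: by (H2) the \(a_i\) are bounded and multiply the measures \(\partial_{x_i}\fe\), while \(a_{n+1}\) is a measure in \(x\) multiplied by the bounded function \(\partial_v\fe\). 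Splitting into components and using that \(\partial_v f=-\delta_{\uh}\) in \(v\), the \((n+1)\)-st (vertical) commutator becomes the explicit expression \(\big[a_{n+1}(x,\uh)-a_{n+1}(x,v)\big]\varphi_\epsilon(v-\uh)\), while the \(i\)-th (horizontal) commutator takes the form \(\int\big[a_i(x,v)-a_i(x,w)\big]\varphi_\epsilon(v-w)\,d\big(\partial_{x_i}f\big)(\cdot,w)\,dw\). In both cases the essential quantity is the increment of \(\a\) in \(v\) across a scale \(|v-w|\le\epsilon/2\).

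It remains to estimate these increments and let \(\epsilon\to0\). On the part where \(\a\) is absolutely continuous in \(x\), (H3) gives \(|a_i(x,v)-a_i(x,w)|\le\omega(\epsilon)\) and \eqref{modulopreciso}, together with (H7), controls the corresponding increment of \(a_{n+1}\); on the singular set \(\mathcal N\) (and on \(J_u\)) the pointwise values must be replaced by the traces \(A_i^\pm\), \(\partial_v A_i^\pm\) and \(\nabla_x\partial_v A_i\), whose \(v\)-continuity is precisely \eqref{modulopm} and Lemma~\ref{lemma scambio}. Since \(u\in BV((0,T)\times\R^n)\), the mass of \(|D_{x,v}f|\) carried in a bounded \(v\)-slab is finite (bounded by \(\int_0^T|Du(t,\cdot)|\,dt+\sigma(\R^n)\) via \eqref{eq:stima}), so the total variation of \(r_\epsilon\) on \((0,T)\times\R^n\times(c,d)\) is controlled by \(C\,\omega(\epsilon)\to0\), giving \eqref{eq:zero}. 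I expect the main obstacle to be the singular part: justifying the Leibniz rule and the representation of \(\a\cdot\nabla_{x,v}f\) near the jumps of \(\uh\) and across \(\mathcal N\), where the naive product rule fails and one must lean on the fine chain rule of \cite{ACDD} and on the Rankine--Hugoniot relation \eqref{eq:RH2} to match the discontinuities of \(\a\) with those of \(\uh\).
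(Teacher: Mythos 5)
Your reduction of \eqref{feps} to the study of the commutator is the same as the paper's (the paper treats that step as evident), but your way of handling \(r_\epsilon\) is genuinely different: you use \(\Div_{x,v}\a=0\) to recast \(r_\epsilon\) as a DiPerna--Lions type commutator \(\a\cdot\nabla_{x,v}\fe-(\a\cdot\nabla_{x,v}f)\ast\varphi_\epsilon\), whereas the paper keeps the derivatives on the products and splits \(r_\epsilon=r_{1,\epsilon}+r_{2,\epsilon}\) componentwise, so that its vertical piece contains \(\varphi'(w)/\epsilon\) terms requiring the difference-quotient analysis of \eqref{r22}, the cancellation \(\int(\varphi-w\varphi')\,dw=0\), (H7) and Lemma~\ref{lemma scambio}. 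If your identity were justified, your route would indeed be cheaper: the vertical term becomes \(\big[a_{n+1}(x,\uh)-a_{n+1}(x,v)\big]\varphi_\epsilon(v-\uh)\), which is \(O(\epsilon)\) using only (H4) (for \(\nabla_iA_i\)) and (H2) (for the traces \(A^\pm\) on \(\mathcal N\)) --- not (H7)/\eqref{modulopreciso}, to which you mistakenly attribute this bound.

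The genuine gap is that the identity \(\Div_{x,v}(\a g)=\a\cdot\nabla_{x,v}g\) for \(g=f,\fe\) is exactly the step that needs proof here, and as stated it is false on \((0,T)\times\mathcal N\times\R\), because the cancellation you invoke pairs measures carrying \emph{different} representatives of \(g\). The horizontal part is an \(x\)-divergence of a product of \(BV_x\) functions, and Vol'pert's Leibniz rule produces on \(\mathcal N\) the singular term \(g^*\,(\partial_v\A^+-\partial_v\A^-)\cdot\nu_{\mathcal N}\) with \(g^*=(g^++g^-)/2\) the average of the \(x\)-traces; the vertical part \(\partial_v(a_{n+1}g)\) produces on \(\mathcal N\) the term \(\hat g\,\partial_v(A^+-A^-)\), where \(\hat g\) is the Borel representative built from \(\uh\) (this is forced, since the product \(\a f\) in \eqref{f:kin} and in \(r_\epsilon\) is defined through \(\uh\)). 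Since \(\Div_{x,v}\a=0\) is precisely the cancellation between these two singular measures, multiplying them by the two different representatives leaves a residual
\[
\Big[\hat g-\tfrac{g^++g^-}{2}\Big]\big(\partial_v A^+-\partial_v A^-\big)\,\H^{n-1}\res\mathcal N\times\L^1_v
\]
which your formula for \(r_\epsilon\) omits. This residual turns out to be harmless --- in the difference defining \(r_\epsilon\) it is \(O(\omega(\epsilon))\) by \eqref{modulopm}, with mass controlled by \(T\,\H^{n-1}(\mathcal N)(d-c)\) --- but identifying it and estimating it is precisely the trace bookkeeping that constitutes the paper's formulas \eqref{r11} and \eqref{r2}, and it is absent from your argument, which instead flags the singular part as an ``expected obstacle.'' Moreover your proposed remedy is off target: the Rankine--Hugoniot relation \eqref{eq:RH2} plays no role in the proof of this proposition (it concerns the time-space jump structure of solutions, not the product rule on \(\mathcal N\)); what resolves the difficulty is the careful matching of one-sided traces \(\partial_vA^\pm,\,A^\pm\) with the traces \(f^\pm\) and with the representative \(\hat f\). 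So the plan is salvageable and would give a somewhat leaner proof, but as written it defers, and partly misdiagnoses, the one step that is the actual content of the proposition.
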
 

\begin{proof}
The fact that \(\fe\) satisfies \eqref{feps} is evident, hence we only have to verify the last part of the statement. To this end note let us write, with obvious notations, \(r_\epsilon\) as 
\[
\begin{split}
r_\eps&=\Div_x (\partial_v \A f_\epsilon)-\Div_x ((\partial_v \A f)_\epsilon)\\
&+\partial_v (a_{n+1} f_\epsilon)-\partial_v ((a_{n+1} f)_\epsilon):=r_{1,\epsilon}+r_{2,\epsilon}
\end{split}
\]
and let us show that both \(r_{1,\epsilon}\) and \(r_{2,\epsilon}\)  are measure satisfying \eqref{eq:zero}.

\medskip
\noindent
\(\bullet\){\em Computation of \(r_{1,\epsilon}\):}\\
 If we test the distribution \(r_{1,\epsilon}\) with functions \(\phi(t,x,v)=\phi_1(t)\phi_2(x)\phi_3(v)\) and we apply Fubini Theorem we obtain
\begin{equation}\label{r1}
\begin{split}
\langle r_{1,\epsilon},\phi\rangle
 & =-\int_{\R}\int_0^T dv dt \, \phi_1(t) \phi_3(v)\int dw\, \varphi(w) 
\\ & \qquad \times \int_{\R^n} dx\, D_{x} \phi_2(x) \cdot \big(\partial_v \A(x,v)-\partial_v \A(x,v-\epsilon w)\big) f(t,x,v-\epsilon w)\\
&=\int_{\R}\int_0^T dv dt \, \phi_1(t) \phi_3(v)\int dw\, \varphi(w)  
\\ & \qquad \times \int_{\R^n} dx\,  \phi_2(x)\Div_x\Big[ \big(\partial_v \A(x,v)-\partial_v \A(x,v-\epsilon w)\big) f(t,x,v-\epsilon w)\Big].
\end{split}
\end{equation}
Recall that, by the coarea formula,  for \(\Leb{2}\) almost every \((v,w)\) the set \(\big\{(t,x)\in (0,T)\times \R^n: u(t,x)>v-\epsilon w\big\}\) is of finite perimeter in \((0,T)\times \R^n\)  and that denoting by  \(J_{v,w}=\partial^*\big\{(t,x)\in (0,T)\times \R^n: u(t,x)>v-\epsilon w\big\}\) we have
\[
D_x f (t,x,v-\epsilon w)=D^j_x f (t,x,v-\epsilon w)=\nu_{J_{v,w}}\H^{n-1}\res J_{v,w}.
\]
 Exploiting the Leibniz rule in \(BV\), \cite[Exercise 3.97]{AFP}, 
 we then find that for \(\Leb{2}\) almost every \((v,w)\)
 \begin{equation}\label{r11}
 \begin{split}
 &\Div\Big[ \big(\partial_v \A(x,v)-\partial_v \A(x,v-\epsilon w)\big) f(t,x,v-\epsilon w)\Big]\\
 &=\sum_{i=1}^n \big(\nabla_i \partial_v A_i(x,v)- \nabla_i \partial_v  A_i(x,v-\eps w) \big)f(t,x, v-\eps w) \Leb{1}_t\times \Leb{n}_x\\
 &\quad+ \big( \partial_v A^+(x,v)- \partial_v  A^+(x,v-\eps w) \big)f^+(t,x, v-\eps w) \Leb{1}_t\times  \mathcal H^{n-1}\res\mathcal N\\
 &\quad- \big( \partial_v A^-(x,v)- \partial_v  A^-(x,v-\eps w) \big)f^-(t,x, v-\eps w)\Leb{1}_t\times  \mathcal H^{n-1}\res\mathcal N\\
 &\quad+\big( \widetilde{\partial_v \A}(x,v)-\widetilde{ \partial_v  \A}(x,v-\eps w) \big)\cdot \nu_{J_{v,w}}
 \\
& \qquad\times\big(f^+(t,x, v-\eps w)-f^-(t,x, v-\eps w)\big)\mathcal H^{n}\res J_{v,w}\setminus 
 ((0,T)\times\mathcal N)\,,
 \end{split}
 \end{equation}
 where we are using the notation \eqref{shortahand1}. According to    (H3), (H7) and \eqref{modulopm}, \eqref{modulopreciso} we then obtain  from \eqref{r1} and \eqref{r11} that \(r_{1,\epsilon}\) is a measure and that
 \begin{equation}\label{i1}
 \begin{split}
 |r_{1,\epsilon}|& ((0,T)\times \R^n\times (c,d))\\
 &\le T\omega(\epsilon) (d-c)
  \|g_2\|_{L^1(\R^n)}
  +2T\omega(\epsilon) 
 (d-c) \H^{n-1}(\mathcal N)\\
 &+\omega(\epsilon)\int\varphi(w) d w\int_{\R} 
 \H^{n}(J_{v,w}) d v.
\end{split}
 \end{equation} 
 Clearly the first two terms on the right hand side of \eqref{i1} go to zero as \(\epsilon\) goes to zero. For what concerns the last one we notice that by the coarea formula
 \[
 \begin{split}
 \int \varphi(w) d w \int_{\R} 
 \H^{n}(J_{v,w}) dv &\le \int \varphi(w) dw \int_{\R} 
 \H^{n}\big(\partial^*\big\{ u(t,x)>v\big\} \big) d v \\
 &\le |Du|((0,T)\times \R^n).
 \end{split}
 \]
Hence also the third term in right hand side of \eqref{i1} goes to zero.

\medskip
\noindent
\(\bullet\){\em Computation of \(r_{2,\epsilon}\):}\\
A computation similar to the previous one shows that \(r_{2,\epsilon}\) is given by the following measure: 
\begin{equation}\label{r2}
\begin{split}
r_{2,\epsilon}&=\Big\{\sum_{i=1}^n\int \nabla_i \partial_v A_i(x,v) f(t,x,v-\epsilon w) \varphi(w)dw  \\
&\quad\quad -\sum_{i=1}^n\int\big(\nabla_i A_i(x,v)-\nabla_i A_i(x,v-\eps w)\big) f(t,x,v-\epsilon w) \frac{\varphi'(w)}{\epsilon} dw \Big\}\,\Leb{1}_t \times \Leb{n}_x\times \Leb{1}_v \\
&\quad+\Big[\int \big(\partial_vA^+(x,v)- \partial_v A^-(x,v)\big) f(t,x,v-\epsilon w) \varphi(w)dw\\
&\quad\quad -\int\big(A^+(x,v)-A^+(x,v-\epsilon w)\big) f(t,x,v-\epsilon w) \frac{\varphi'(w)}{\epsilon} dw\\
&\quad\quad +\int \big(A^-(x,v)-A^-(x,v-\epsilon w)\big) f(t,x,v-\epsilon w) \frac{\varphi'(w)}{\epsilon} dw \Big]\,\,\Leb{1}_t \times \H^{n-1}\res \mathcal N\times \Leb{1}_v , 
\end{split}
\end{equation}
where  we are again using the convention  \eqref{shortahand1}. Note that by (H2) and the discussion in Section~\ref{subsec:chainrule}, 
\(|A^{\pm}(x,v)-A^\pm(x,v-\epsilon w)|\le M \epsilon |w|\) for \(\H^{n-1}\)-a.e.\ \(x\in \mathcal N\) and that,  by (H4), \(\big|\nabla_i A_i(x,v)-\nabla_i A_i(x,v-\eps w)\big|\le g_1(x) \epsilon |w| \). 
Hence if we can show that the term in curly brackets (respectively in square bracket) in \eqref{r2} goes to zero as \(\epsilon\) goes to zero for \(\Leb{1}_t \times \Leb{n}_x\times \Leb{1}_v \) almost every \((t,x,v)\) (respectively for \(\Leb{1}_t \times \H^{n-1}\res \mathcal N\times \Leb{1}_v\) almost every \((t,x,v)\)), an application of the Lebesgue dominated convergence theorem applied with respect to the measure  
\(\Leb{1}_t \times \Leb{n}_x\times \Leb{1}_v \) (respectively \(\Leb{1}_t \times \H^{n-1}\res \mathcal N\times \Leb{1}_v\)) will imply that \(r_{2,\epsilon}\) satisfies \eqref{eq:zero}. 
To this end let us write 
\begin{equation}\label{r22}
\begin{split}
&\int \nabla_i \partial_v \nabla A_i(x,v) f(t,x,v-\epsilon w) \varphi(w)dw \\
&\quad-\int\big(\nabla_i A_i(x,v)-\nabla_i A_i(x,v-\eps w)\big) f(t,x,v-\epsilon w) \frac{\varphi'(w)}{\epsilon} dw\\
&=\int \big\{\nabla_i \partial_v A_i(x,v)\varphi(w)-\nabla_i \partial_v A_i(x,v)w\varphi'(w)\big\} f(t,x,v-\epsilon w)dw\\
&\quad-\int\Bigg\{\frac{\nabla_i A_i(x,v)-\nabla_i A_i(x,v-\eps w)}{\epsilon}- w\, \nabla_i  \partial_v  A(x,v) \Bigg\} f(t,x,v-\epsilon w) \varphi'(w) dw.
\end{split}
\end{equation}
Since by Lemma \ref{lemma scambio}, the map \(v\to \nabla_iA_i(x,v)\) is differentiable for almost every \(x\) with derivative given by \( \nabla_i  \partial_v A_i(x,v)\), by the fundamental theorem of calculus and (H7) for every such \(x\) we can estimate the second integral in the right hand side of \eqref{r22} by
 \[
  \omega(\epsilon)\, g_2(x) \int |w\phi'(w)| dw ,
  \]
 which goes to zero as \(\epsilon \to 0\). For what concerns the first term we notice that \(f(t,x,v-\eps w)\to f(t,x,w)\) for almost every \((t,x,v)\) since
\begin{equation*}
\Leb{1}\times\Leb{n} \Big(\{(t,x): \uh(t,x)=v\}\Big)=0
\end{equation*}
for all but countably many \(v\). Hence the first term in the right hand side of \eqref{r22} goes to
\[
 \nabla_i \partial_v A_i(x,v)f(t,x,v) \int [\varphi(w)-w\varphi'(w)] dw=0.
\]
The term in square bracket in \eqref{r2} can be treated in the same way this time  using that \(v\mapsto A^\pm(x,v)\) is differentiable for \(\H^{n-1}\) almost every \(x\in \mathcal N\) with derivative given by \(\partial_v A^\pm(x,v\)) and that 
\begin{equation*}
\Leb{1}\times\H^{n-1}\res \mathcal N  \Big(\{(t,x): \uh(t,x)=v\}\Big)=0
\end{equation*}
for all but countably many \(v\).
\end{proof}

\bigskip
The next step concerns the derivation of the evolution equation 
satisfied by $\fe^2 := (\fe)^2$.
In order to simplify the notation, we define
the differential operator $L$ by
\[
Lg(t,x,v):=\partial_t g(t,x,v)+
\Div_{x,v}[\a(x,v)g(t,x,v)].
\]

\begin{lemma} \label{fepsilonquadro}
The function $f_\epsilon^2$ satisfies the following equation
\[
\begin{split}
Lf^2_\epsilon
& = 2f_\epsilon^*Lf_\epsilon
+\Resto[\fe]
\end{split}
\]
that is
\begin{multline}\label{fepsquadro1}
\partial_t f^2_\epsilon(t,x,v)+
\Div_{x,v}[\a(x,v)f^2_\epsilon(t,x,v)]
\\ = 2(f_\epsilon(t,x,v))^*[\partial_{v}(m_\epsilon(t,x,v))+r_\epsilon]
+\Resto[\fe]
\end{multline}
where \(f_\epsilon^*\) is defined as in \eqref{eq: def*} and 
\begin{equation}\label{resto0}
\begin{split}
\Resto[\fe] := \Big\{&
[\left(\partial_v A\right)^{+}-\left(\partial_v A\right)^{-}]
\big(\fe^+ - \fe\big)
\big(\fe - \fe^-\big)\\
&+\big(\fe^+ - \fe+\fe^- - \fe\big) [A^{+} - A^{-}] \partial_v\fe\,\Big\} \L^1_t\times \H^{n-1}\res \mathcal N\times \L^1_v\,.
\end{split}
\end{equation}
Here we are using the convention \eqref{shortahand1}.
\end{lemma}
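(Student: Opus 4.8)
The plan is to reduce the asserted identity to a term-by-term computation of the difference $L\fe^2-2\fe^* L\fe$, organised according to the three blocks of the operator $L$: the time derivative $\partial_t$, the spatial part $\sum_{i=1}^n\partial_{x_i}(a_i\,\cdot\,)$ with $a_i=\partial_v A_i$, and the vertical part $\partial_v(a_{n+1}\,\cdot\,)$. Since $\fe$ is smooth in $v$ and of bounded variation in $(t,x)$, the operation $\fe\mapsto\fe^2$ is governed by the $BV$ chain rule, which in our normalization reads $D_{t,x}\fe^2=2\fe^* D_{t,x}\fe$ (the algebraic identity $(\fe^+)^2-(\fe^-)^2=2\fe^*(\fe^+-\fe^-)$ being exactly what converts the jump part). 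First I would record this, together with the $BV$ Leibniz rule \cite[Exercise 3.97]{AFP}, which I will apply to the genuine products $a_i\fe$ and $a_i\fe^2$: here $a_i\,\L^n$ is the absolutely continuous part of $\a$ in the $x_i$-slot, so these are honest products of $BV$ functions and no ambiguity of traces arises.

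Next I would isolate the contributions supported off $(0,T)\times\mathcal N$. On the diffuse parts, and on the portion of the jump set $J_u$ where the coefficients do not jump, the chain rule $D\fe^2=2\fe^* D\fe$ and the classical Leibniz rule force every term of $L\fe^2$ to pair with the corresponding term of $2\fe^* L\fe$; using $\fe^*=\tilde\fe$ $\L^n$-a.e. and $\fe^*=\tfrac12(\fe^++\fe^-)$ on jumps, all of these cancel. The $\partial_t$ block cancels entirely for the same reason. Thus the remainder is concentrated on $(0,T)\times\mathcal N$, and it only remains to compute it there.

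The heart of the argument — and the step I expect to be most delicate — is the analysis on $(0,T)\times\mathcal N$, where two genuinely singular mechanisms interact. From the spatial block, the jump of $a_i=\partial_v A_i$ across $\mathcal N$ meets the jump of $\fe$: applying the Leibniz rule and simplifying $a_i^+(\fe^+)^2-a_i^-(\fe^-)^2-2\fe^*(a_i^+\fe^+-a_i^-\fe^-)$ with $\fe^*=\tfrac12(\fe^++\fe^-)$, this block contributes $-\fe^+\fe^-\,[(\partial_v A)^+-(\partial_v A)^-]$ (in the shorthand \eqref{shortahand1}). From the vertical block I must handle the $\mathcal N$-concentrated part of the measure $a_{n+1}$, namely $-(A^+-A^-)\,\H^{n-1}\res\mathcal N$; here the crucial subtlety is that, consistently with \eqref{f:kin} and \eqref{eq:8}, the product of this measure with $\fe^2$ (respectively $\fe$) uses the \emph{physical} value of $\fe$, i.e.\ $\chi(\cdot,\uh)\ast\varphi_\epsilon$ evaluated at the chosen representative $\uh$, whereas the factor $\fe^*$ multiplying $L\fe$ is the precise representative $\tfrac12(\fe^++\fe^-)$. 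It is exactly this mismatch between $\fe$ and $\fe^*$ that produces a nonzero remainder. Differentiating in $v$ — legitimate because $v\mapsto A^\pm(x,v)$ and $v\mapsto\nabla_x A(x,v)$ are $C^1$ with $\partial_v A^\pm=(\partial_v A)^\pm$, by Lemma~\ref{lemma scambio} and \eqref{eq:scambio} — and using $\partial_v\fe^2=2\fe\,\partial_v\fe$, this block contributes $[(\partial_v A)^+-(\partial_v A)^-]\,\fe(2\fe^*-\fe)+2(A^+-A^-)\,\partial_v\fe\,(\fe^*-\fe)$ on $\mathcal N$.

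Finally I would add the two $\mathcal N$-contributions. The coefficient of $[(\partial_v A)^+-(\partial_v A)^-]$ becomes $\fe(2\fe^*-\fe)-\fe^+\fe^-=(\fe^+-\fe)(\fe-\fe^-)$ after inserting $2\fe^*=\fe^++\fe^-$, while the coefficient of $[A^+-A^-]\partial_v\fe$ becomes $2(\fe^*-\fe)=\fe^+-\fe+\fe^--\fe$; multiplying by $\L^1_t\times\H^{n-1}\res\mathcal N\times\L^1_v$ yields precisely $\Resto[\fe]$ as in \eqref{resto0}. The whole difficulty is thus bookkeeping: correctly interpreting the products against the singular measure $a_{n+1}$ and against the jumping coefficients $a_i$ on $\mathcal N$, and keeping the physical trace $\fe$ rigorously separated from the precise representative $\fe^*$; once this is done, the algebra that produces \eqref{resto0} is elementary.
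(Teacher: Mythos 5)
Your proof is correct and follows essentially the same route as the paper: the $BV$ chain rule plus the Leibniz rule, cancellation of all diffuse contributions and of the jump contributions off $(0,T)\times\mathcal N$ (via $(\fe^2)^\pm=(\fe^\pm)^2$ and $2\fe^*=\fe^++\fe^-$), and an explicit computation of the residual on $(0,T)\times\mathcal N$ that keeps the physical value $\fe$ (built from $\uh$) rigorously distinct from the precise representative $\fe^*$. In fact your signs for the vertical block, $[(\partial_v A)^+-(\partial_v A)^-]\,\fe(2\fe^*-\fe)+2[A^+-A^-]\,(\fe^*-\fe)\,\partial_v\fe$, are the consistent ones: the paper's intermediate display \eqref{resto1} records these two terms with the opposite sign (an evident typo), since only your version combines with the spatial-block term $-\fe^+\fe^-[(\partial_v A)^+-(\partial_v A)^-]$ to yield exactly the stated remainder $\Resto[\fe]$ in \eqref{resto0}.
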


\begin{proof}
Note that \(f_\eps(t,x,v)\) is a \(BV_{loc}\) function with respect to all its variables. By the chain rule 
\begin{equation*}
\widetilde {\partial_t} f_\epsilon^2(t,x,v)=2f^*_\epsilon(t,x,v)\widetilde{\partial_t} f_\epsilon(t,x,v)\,,\qquad
\widetilde{D_x}  f_\epsilon^2(t,x,v)=2f^*_\epsilon(t,x,v)\widetilde{D_x}  f_\epsilon(t,x,v)
\end{equation*}
for \(\H^{n+1}\) almost every \((t,x,v)\in \R^{n+2}\setminus J_{f_\epsilon}\). 
Since \(v\mapsto f_\eps(t,x,v)\)  is \(C^1\) we also have (recall that \(\uh(t,x)\) is everywhere defined)
\[
\partial_v f_\epsilon^2(t,x,v)=2f_\epsilon(t,x,v)\partial_v f_\epsilon(t,x,v)\,,\\
\]
for every \((t,x,v)\). Moreover 
\[
\chi(\cdot,u)\ast\varphi_\epsilon=\frac{1}{\epsilon}\int_{-\infty}^u \varphi\Big(\frac{v-w}{\epsilon}\Big)dw
\]
is a smooth function of \(u\). According to this \(J_{f^2_\epsilon}=J_{f_\epsilon}=J_u\times \R\). Hence, recalling that \eqref{shortahand1} is in force,
\begin{equation}\label{resto1}
\begin{split}
&L\fe^2 -2(\fe)^*L\fe \\
&=\Big\{\big(\partial_v A^+ (f^2_\epsilon)^+-\partial_v A^- (f^2_\epsilon)^-\big )-2\fe^* \big(\partial_v A^+ f_\epsilon^+-\partial_v A^- f_\epsilon^-\big )\Big\}\L^1_t\times \H^{n-1}\res \mathcal N\times \L^1_v\\
&\quad+ \widetilde{ \partial_v \A}\cdot \nu_{J_{f_\epsilon}}\Big\{\big((f^2_\epsilon)^+-(f^2_\epsilon)^-\big) -2\fe^*\big(f_\epsilon^+-f_\epsilon^-\big)\Big\}
\H_{t,x,v}^{n+1}\res (J_{f_\epsilon}\setminus 
[(0,T)\times\mathcal N\times\R])\\
&\quad+\Big\{\big( A^+ -A^- \big)\big(2\fe-2\fe^*\big)\partial_v \fe\Big\}\L^1_t\times \H^{n-1}\res \mathcal N\times \L^1_v\\
&\quad+\Big\{\big((\partial_v A)^+ -(\partial_v A)^- \big)\big(\fe-2\fe^*\big)\fe\Big\}\L^1_t\times \H^{n-1}\res \mathcal N\times \L^1_v,
\end{split}
\end{equation}
where we have used that \(\partial_v (A^{\pm})=(\partial_v A)^\pm\) for \(\H^{n-1}\) almost every \(x\) in \(\mathcal N\) and that \(\fe^*=\fe\), for \(\Leb{n}+|\widetilde {D_{t,x}}\fe|\) almost every \(x\), see Section \ref{subsec:chainrule}. Since \((\fe^2)^\pm=(\fe^\pm)^2\) we have
\[
(f^2_\epsilon)^+-(f^2_\epsilon)^- -2\fe^*\big(f_\epsilon^+-f_\epsilon^-\big)=0\,,
\]
hence the second line in the right hand side of \eqref{resto1} vanishes. A simple algebraic computation now shows that \eqref{resto1} reduces to \eqref{resto0}.
\end{proof}

Let us now consider, for \(R>0\), the  following test function $\psi_R\in C^\infty_c(\R^{n+1})$,
$\psi_R\geq 0$ defined by
\begin{equation}\label{f:testf}
\psi_R(x,v)=\theta\left(\frac{x}{R}\right)\phi_R(v),
\end{equation}
with $\theta\in C^\infty_c(\R^n)$\,, $\phi_R\in C^\infty(\R)$\,, $\phi_R(v)=1$ if $|v|\leq R$\,,
$\phi_R(v)=0$ if $|v|\geq R+1$\,, $\theta(x)=1$ if $|x|\leq 1$ and $|\phi'_R|\leq 2$\,.
\bigskip

\begin{lemma}\label{prop.19}
If \(u\) is a kinetic solution of \eqref{f:scalar}, then 
\begin{equation}\label{f:prop19}
\begin{split}
\lim_{R\to+\infty}  \lim_{\epsilon\to 0} & \int_0^T
\int_{\R^{n+1}}\ (\nu*\varphi_\epsilon)
\ \psi_R\, dm_\epsilon 
\\ = {} &  
- \lim_{R\to+\infty} \lim_{\epsilon\to 0}
\int_0^T \int_{\R^{n+1}}\psi_R(x,v) d \Resto[\fe]
\\ 
= {} & 
\int_0^T \int_{\mathcal {N}} Q(u)\,d\mathcal \H^{n-1}\,dt\,,
\end{split}
\end{equation}
where (recall \eqref{shortahand1})
\begin{equation}\label{defQ}
\begin{split}
Q(u)&:=
\chi(u^-, u^+) A^+(u^-)
+ \chi(u^+, u^-) A^+(u^+)
\\ & 
- \chi(u^+, u^-) A^-(u^+) 
- \chi(u^-, u^+) A^-(u^-)
\\ &
- \chi(u^+, \uh) A^+(u^+)
+ \chi(u^+, \uh) A^-(u^+)
\\ & 
- \chi(u^-, \uh) A^+(u^-)
+ \chi(u^-, \uh) A^-(u^-)\,,
\end{split}
\end{equation}
and
\[
\nu(t,x,v):= \delta_{u^+(t,x)}(v) + \delta_{u^-(t,x)}(v)=-2\partial_v f^*\,.
\]
\end{lemma}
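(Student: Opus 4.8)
The plan is to test the evolution equation for $\fe^2$ from Lemma~\ref{fepsilonquadro} against $\psi_R$ and to read off the three resulting contributions. Integrating \eqref{fepsquadro1} over $(0,T)\times\R^{n+1}$ against $\psi_R$, then integrating the term $2\psi_R\fe^*\,\partial_v m_\epsilon$ by parts in $v$ and using that by the very definition of $\nu$ one has $\nu*\varphi_\epsilon=-2\partial_v\fe^*$, I would arrive at the identity
\begin{multline*}
\int_0^T\!\!\int_{\R^{n+1}}\psi_R\,(\nu*\varphi_\epsilon)\,dm_\epsilon
=\int_0^T\!\!\int_{\R^{n+1}}\psi_R\,L\fe^2
+\int_0^T\!\!\int_{\R^{n+1}}2\,\fe^*\,\partial_v\psi_R\,dm_\epsilon\\
-\int_0^T\!\!\int_{\R^{n+1}}2\psi_R\,\fe^*\,dr_\epsilon
-\int_0^T\!\!\int_{\R^{n+1}}\psi_R\,d\Resto[\fe].
\end{multline*}
The left-hand side is exactly the quantity appearing in \eqref{f:prop19}, so the first equality follows once I show that the first three terms on the right vanish in the iterated limit, while the second equality reduces to an explicit computation of the last term.

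Fix $R>\|u\|_\infty+1$. Since $\spt m_\epsilon\subset[0,T]\times\R^n\times[-\|u\|_\infty-\tfrac{\epsilon}{2},\|u\|_\infty+\tfrac{\epsilon}{2}]$ while $\partial_v\psi_R$ is supported where $|v|\ge R$, the second term vanishes identically for $\epsilon$ small. The third term vanishes as $\epsilon\to0$: $\psi_R\fe^*$ is bounded and supported in a fixed compact set of the $v$-variable, so it is controlled by $|r_\epsilon|\big((0,T)\times\R^n\times(-R-1,R+1)\big)$, which tends to $0$ by Proposition~\ref{fepsilon}. For the first term I use that $\chi^2=\chi$ off the diagonal $\{v=\uh\}$, whence $\fe^2\to f:=\chi(v,\uh)$ a.e.\ and boundedly; passing to the limit in the temporal boundary terms of $\int\psi_R L\fe^2$ (well defined because $u\in C([0,T];L^1(\R^n))$) and in its transport part, and invoking the kinetic equation $Lf=\partial_v m$ satisfied by $f$, I obtain $\lim_{\epsilon\to0}\int\psi_R L\fe^2=-\int\partial_v\psi_R\,dm$, which again vanishes for $R>\|u\|_\infty+1$ by the localization of $\spt m$. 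This establishes the first equality in \eqref{f:prop19}.

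For the second equality I compute the inner $v$-integral of the density of $\Resto[\fe]$; denote by $\rho_\epsilon$ the bracket in \eqref{resto0}, so that $d\Resto[\fe]=\rho_\epsilon\,\L^1_t\times\H^{n-1}\res\mathcal N\times\L^1_v$. Taking $R$ large so that $\phi_R\equiv1$ on the ($v$-)compact support of $\rho_\epsilon$, the key algebraic observation is that the first summand of \eqref{resto0} is precisely the $\partial_v(A^+-A^-)$-term of $\frac{d}{dv}\big[(A^+-A^-)(\fe^+-\fe)(\fe-\fe^-)\big]$ (recall $\partial_v A^\pm=(\partial_v A)^\pm$ on $\mathcal N$); this total derivative integrates to $0$ since $\fe^+-\fe$ is compactly supported in $v$, and the remaining terms cancel exactly against the second summand of \eqref{resto0}. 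One is thus left with
\[
\int_\R \rho_\epsilon\,dv=-\int_\R(A^+-A^-)\big[\partial_v\fe^+\,(\fe-\fe^-)-(\fe^+-\fe)\,\partial_v\fe^-\big]\,dv.
\]
Since $\partial_v\fe^\pm=-\varphi_\epsilon(\cdot-u^\pm)$ concentrates at $u^\pm$ and $v\mapsto A^\pm(x,v)$ is continuous, letting $\epsilon\to0$ shows that $\int_\R\rho_\epsilon\,dv$ converges to $(A^+(u^+)-A^-(u^+))\,(\chi(u^+,\uh)-\chi(u^+,u^-))$ minus $(A^+(u^-)-A^-(u^-))\,(\chi(u^-,u^+)-\chi(u^-,\uh))$, the evenness of $\varphi$ supplying the value $\tfrac12$ of $\chi$ along the diagonals $u^+=\uh$, $u^-=\uh$, $u^+=u^-$. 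Expanding $-\lim_\epsilon\int_\R\rho_\epsilon\,dv$ reproduces verbatim the eight terms of $Q(u)$ in \eqref{defQ}. A dominated convergence argument on $(0,T)\times\mathcal N$ (a finite measure since $T<\infty$ and $\H^{n-1}(\mathcal N)<\infty$, the integrands being uniformly bounded because $\A\in L^\infty$) then lets me exchange $\lim_\epsilon$ with the integral, and $\lim_R$ replaces $\theta(x/R)$ by $1$, yielding $\int_0^T\int_{\mathcal N}Q(u)\,d\H^{n-1}\,dt$.

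The main obstacle is the rigorous evaluation of the concentration limits in this last step, where the products of the approximate identities $\varphi_\epsilon(\cdot-u^\pm)$ with the mollified Heaviside profiles $\fe,\fe^\mp$ must be computed keeping track of all relative positions of $u^+,u^-,\uh$, including their coincidences within scale $\epsilon$: this is exactly where the symmetry $\varphi(w)=\varphi(-w)$ generates the half-integer values of $\chi$ and makes the bookkeeping close up to $Q(u)$. A secondary, more routine difficulty is the justification that $\int\psi_R L\fe^2\to0$, which relies both on the a.e.\ identity $\chi^2=\chi$ and on a careful handling of the temporal boundary terms via $u\in C([0,T];L^1(\R^n))$ together with the localization of $\spt m$ in the $v$-variable.
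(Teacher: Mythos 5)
Your proof is correct and follows essentially the same route as the paper: test the equation for $\fe^2$ against $\psi_R$, kill the $r_\epsilon$- and $\partial_v\psi_R$-contributions, exploit $f^2=f$ to dispose of the boundary and transport terms, and evaluate the $\Resto[\fe]$-term by integrating by parts in $v$ (using $\partial_v A^\pm=(\partial_v A)^\pm$ on $\mathcal N$) and then concentrating the symmetric mollifiers, which is exactly the content of the paper's Lemma~\ref{tecnico}. The only cosmetic differences are that the paper subtracts the mollified equation for $\fe$ from that for $\fe^2$, so that the temporal and transport terms carry the factor $\fe^2-\fe\to f^2-f=0$, instead of passing to the limit in $\int\psi_R\,L\fe^2$ and invoking the kinetic equation for $f$ as you do (your variant additionally requires the a.e.\ convergence $\fe^2\to f$ with respect to $\L^1_t\times(\L^n+\sigma)_x\times\L^1_v$, handled in the paper at the end of the proof of Proposition~\ref{fepsilon}), and that the paper controls the $\partial_v\psi_R$-term by the finite total mass of $m$ rather than by the localization of $\spt m$, which for a general kinetic solution must itself first be derived as in Step~2 of Theorem~\ref{equivalence}.
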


\begin{proof}
Let \(\eta_\delta \in C_c^1(0,T)\) with \(\eta=1\) on \([\delta,T-\delta]\).  
Since
\(u\in C([0,T];L^1(\R^n))\), we have that \(f_\eps\in C([0,T];L^1(\R^{n+1}))\), hence 
by testing \eqref{fepsquadro1} and \eqref{feps} with \(\eta_\delta(t)\psi_R(x,v)\) and  letting \(\delta\to 0\) we obtain by standard computations
\begin{align}\label{11}
\int_{\R^{n+1}} & [f^2_\epsilon(T,x,v)-\fe(T,x,v)] \psi_R(x,v)\ dx\,dv
\\
\label{22}
= {} & 
\int_{\R^{n+1}}[f^2_\epsilon(0,x,v)-\fe(0,x,v)]\psi_R(x,v)\ dx\,dv
\\
\label{333} 
&+\int_0^T\int_{\R^{n+1}} (f_\epsilon^2-\fe) \nabla_{x,v}\psi_R\cdot  d\,\a(x,v)\,dt
\\
\label{77}
&+\int_0^T\int_{\R^{n+1}}[-2\partial_v f_\epsilon^*]\psi_R \ dm_\epsilon
\\
\label{88}
&+\int_0^T\int_{\R^{n+1}}[1-2f_\epsilon^*] \partial_v \psi_R\ dm_\epsilon
\\
\label{99}
&+\int_0^T\int_{\R^{n+1}}[
2f_\epsilon^*-1] \psi_R\ dr_\epsilon
\\
\label{unused}
&+\int_0^T \int_{\R^{n+1}}\psi_R\, d \Resto[\fe](x,v)\, dt\,.
\end{align}
As \(\epsilon\) goes to \(0\), the integral \eqref{11} tends  to
\[
\int_{\R^{n+1}}\ 
\big[f^2(T,x,v)-f(T,x,v)\big]
\ \psi_R(x,v)\ dx\,dv=0,
\]
since $f^2=f$. 
In  the same way  the integral \eqref{22} tends to \(0\). Moreover, recalling the definition of   \(\a\) we obtain that 
\begin{equation*}
\begin{split}
\int_0^T&\int_{\R^{n+1}} (f_\epsilon^2-\fe) \nabla_{x,v}\psi_R(x,v)\cdot  d\,\a(x,v)\,dt\\
&=
\int_0^T\int_{\R^{n+1}} 
(f_\epsilon^2-\fe)\nabla_x \psi_R \cdot  \partial_v \A\,dx\,dv\,dt\\
&+
\int_0^T\int_{\R^{n+1}}\ 
(f_\epsilon^2-\fe)\partial_v \psi_R \,d a_{n+1}\,dt\,.
\end{split}
\end{equation*}
We now note that, arguing as the end of the proof of  Lemma \ref{fepsilon},  
$f_\epsilon^2\to f^2=f$, for \(\L_t^1\times(\L^n+\sigma)_x\times \L^1_v\) almost every point. This and the fact that \(|a_{n+1}|\aac \L^n+\sigma\), see  \eqref{compact},  imply that  the integral \eqref{333} tends to $0$, as $\epsilon\to 0$.

Let us now we consider integral \eqref{77}. We remark that 
$$
\partial_v f_\epsilon^*=-\frac{1}{2}\nu*\phi_\epsilon
=-\frac{1}{2}[\delta_{u_-(t,x)}(v)+\delta_{u_+(t,x)}(v)]*\phi_\epsilon.
$$
Therefore the integral \eqref{77} is equal to
\[
\int_0^T\int_{\R^{n+1}}\ 
(\nu*\phi_\epsilon)
\ \psi_R d m_\epsilon.
\]
Let us now estimate the integral \eqref{88}.
Recalling the choice of the test function made in
\eqref{f:testf},
we have that
\[
\left|\int_0^T
\int_{\R^{n+1}}\ 
m_\epsilon(t,x,v)[
1-2f_\epsilon^*]
\ \partial_v \psi_R(x,v)\ dt\,dx\,dv\right|
\leq 2\,m_\epsilon([0,T]\times \R^n\times I_R)\,, 
\]
where $I_R=\{v: R\leq |v|\leq R+1\}\,.$ Hence, by letting   $\epsilon\to 0$ and $R\to+\infty$, the integral \eqref{88}  tends to $0$ since  $m([0,T]\times \R^n\times \R)<+\infty$.

By \eqref{eq:zero}, the integral in \eqref{99} tends to \(0\) as \(\epsilon\to 0\). 
Gathering all the information above, the first equality in \eqref{f:prop19} is proved.

It remains to compute
\[
- \lim_{R\to+\infty} \lim_{\epsilon\to 0}
\int_0^T \int_{\R^{n+1}}\psi_R(x,v) d \Resto[\fe]
\]
and to show that the second equality in \eqref{f:prop19} holds.
We recall that the explicit form of $\Resto[\fe]$ is given
in Lemma~\ref{fepsilonquadro}.
We have that
\[
\begin{split}
- \int_0^T & \int_{\R^{n+1}}\psi_R(x,v) d \Resto[\fe]
\\ = {} &
-\int_0^T \int_{\R^{n+1}} (\fe^+-\fe)(\fe^- - \fe)
(A^+ - A^-)\partial_v \psi_R\, dv\, d\H^{n-1}(x)\, dt
\\ &
-\int_0^T \int_{\R^{n+1}} \big[
(\fe^+-\fe)\partial_v(\fe^- - \fe)
+ (\fe^--\fe)\partial_v(\fe^+ - \fe)
\\ & \hskip2cm +(\fe^+-\fe + \fe^- - \fe)\partial_v \fe
\big]
(A^+ - A^-) \psi_R\, dv\, d\H^{n-1}(x)\, dt
\\ =: {} & I_1(\epsilon, R) + I_2(\epsilon, R)\,.
\end{split}
\]
Again, the first integral $I_1(\epsilon, R)$  tends to $0$ as \(\epsilon\to 0\) and \(R\to +\infty\)
thanks to the choice of the test function $\psi_R$.
In order to compute the $I_2(\epsilon, R)$, let us recall that
\[
\partial_v(\fe^{\pm}-\fe) = 
[\delta_{\uh}(v)-\delta_{u^{\pm}}(v)]\ast
\varphi_{\epsilon}(v)\,,
\qquad
\partial_v\fe
= - \delta_{\uh}(v)\ast \varphi_{\epsilon}(v).
\]
A straightforward computation based on Lemma \ref{tecnico} below, now  gives that
\begin{equation}\label{treno1}
\lim_{R\to+\infty} \lim_{\epsilon\to 0} I_2(\epsilon, R) =
\int_0^T \int_{\mathcal{N}}
Q(u)\, d\H^{n-1}(x)\, dt,
\end{equation}
where
\begin{equation}\label{treno2}
\begin{split}
Q(u) := {} &
\left[-\chi(u^+, \uh) + \chi(u^+, u^-)\right]\, [A^+(u^+) - A^-(u^+)]
\\ & +
\left[\frac{1}{2}-\chi(\uh, u^-) + 
\frac{1}{2} - \chi(\uh, u^+)\right]\, [A^+(\uh) - A^-(\uh)]
\\ & +
\left[-\chi(u^-, \uh) + \chi(u^-, u^+)\right]\, [A^+(u^-) - A^-(u^-)]
\\ & +
\left[-1 +\chi(\uh, u^-) +  \chi(\uh, u^+)\right]\, [A^+(\uh) - A^-(\uh)]\,,
\end{split}
\end{equation}
which, after some computations is easily seen to coincide with \eqref{defQ}. The second equality
in \eqref{f:prop19} now follows form this together with \eqref{treno1} and \eqref{treno2}.
\end{proof}

We conclude this section with the following technical lemma that we have used in the proof of \eqref{treno2}, for later use we state the lemma in a slightly more general setting.
\begin{lemma}\label{tecnico}
Let \(\varphi_\epsilon:\R\to \R\) be a family of symmetric mollifiers and let \(h_1\, h_2:\R \to \R\) be two bounded and uniformly continuous functions, then for every \(u\,,\hat u\in \R\)
\begin{equation}\label{ladri0}
\lim_{\epsilon\to 0}\int
h_1(v)\, (\delta_u\ast\varphi_\epsilon)(v)\,
[h_2(\cdot)\chi(\cdot, \uh)]\ast\varphi_\epsilon(v)\, dv
= h_1(u) \, h_2(u)\, \chi(u, \uh)\,.
\end{equation}
Here \(\chi(u,\uh)\) is defined according to \eqref{defchi}.
\end{lemma}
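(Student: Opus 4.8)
The plan is to make both convolutions explicit and then rescale at the common scale $\epsilon$ around the base point $u$. Writing $(\delta_u\ast\varphi_\epsilon)(v)=\varphi_\epsilon(v-u)$ and $[h_2(\cdot)\chi(\cdot,\uh)]\ast\varphi_\epsilon(v)=\int_\R h_2(w)\chi(w,\uh)\varphi_\epsilon(v-w)\,dw$, the quantity to be computed is
\[
I_\epsilon=\int_\R h_1(v)\,\varphi_\epsilon(v-u)\left(\int_\R h_2(w)\chi(w,\uh)\varphi_\epsilon(v-w)\,dw\right)dv.
\]
First I would substitute $v=u+\epsilon s$ and $w=u+\epsilon r$ and use $\varphi_\epsilon(t)=\epsilon^{-1}\varphi(t/\epsilon)$, so that the two singular factors become $\varphi(s)$ and $\varphi(s-r)$ and all powers of $\epsilon$ cancel, turning $I_\epsilon$ into the integral against the fixed compactly supported kernel $\varphi$
\[
I_\epsilon=\int_\R\int_\R h_1(u+\epsilon s)\,h_2(u+\epsilon r)\,\chi(u+\epsilon r,\uh)\,\varphi(s)\,\varphi(s-r)\,dr\,ds.
\]

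Next I would pass to the limit $\epsilon\to0$ by dominated convergence on the double integral, the integrand being bounded by the integrable function $\|h_1\|_\infty\|h_2\|_\infty\varphi(s)\varphi(s-r)$. Since $h_1,h_2$ are bounded and uniformly continuous, $h_1(u+\epsilon s)\to h_1(u)$ and $h_2(u+\epsilon r)\to h_2(u)$, while the only delicate factor is $\chi(u+\epsilon r,\uh)$, whose behaviour depends on the sign of $u-\uh$: because $\chi$ only sees the sign of $u+\epsilon r-\uh$, for a.e.\ $r$ it tends to $1$ if $u<\uh$, to $0$ if $u>\uh$, and it coincides with the $\epsilon$-independent function $\chi(r,0)$ if $u=\uh$. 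Hence
\[
I_\epsilon\longrightarrow h_1(u)\,h_2(u)\,C(u,\uh),\qquad C(u,\uh):=\int_\R\int_\R L(r)\,\varphi(s)\,\varphi(s-r)\,dr\,ds,
\]
where $L\equiv1$, $L\equiv0$, or $L(r)=\chi(r,0)$ in the three respective cases.

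Finally I would evaluate $C(u,\uh)$ and verify it equals $\chi(u,\uh)$. In the two cases $u\neq\uh$ this is immediate, since $\int_\R\varphi(s-r)\,dr=1$ gives $C=\int_\R\varphi=1$ when $u<\uh$ and $C=0$ when $u>\uh$. The delicate case, which I expect to be the main obstacle, is $u=\uh$: here both mollifiers concentrate exactly at the jump of $\chi$, and the hypothesis that $\varphi$ is \emph{symmetric} becomes essential. Setting $\Phi(s):=\int_s^{+\infty}\varphi(t)\,dt$, the inner integral over $\{r<0\}$ equals $\Phi(s)$, so that $C=\int_\R\varphi(s)\Phi(s)\,ds$; since $\Phi'=-\varphi$ with $\Phi(-\infty)=1$ and $\Phi(+\infty)=0$, integration gives $C=-\tfrac12\big[\Phi^2\big]_{-\infty}^{+\infty}=\tfrac12=\chi(u,u)$. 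Thus in all three cases $C(u,\uh)=\chi(u,\uh)$, which yields \eqref{ladri0}. Away from the coincident point the statement is a routine mollification identity; it is only at $u=\uh$ that the correct value $1/2$ is forced, and only by the evenness of $\varphi$.
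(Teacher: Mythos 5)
Your proof is correct, but it reaches the limit by a different mechanism than the paper's. The paper splits $h_2(w)=[h_2(w)-h_2(u)]+h_2(u)$, kills the oscillation term with the modulus of continuity of $h_2$, then estimates $|I_2-h_1(u)\chi(u,\hat u)|$ via the uniform continuity of $h_1$, so that everything reduces to the pure mollifier identity
\begin{equation*}
\lim_{\epsilon\to 0}\iint \varphi_\epsilon(v-u)\,\chi(w,\hat u)\,\varphi_\epsilon(v-w)\,dw\,dv=\chi(u,\hat u)\,,
\end{equation*}
which is then checked case by case. You instead rescale both integration variables at scale $\epsilon$ and apply dominated convergence against the fixed integrable kernel $\varphi(s)\varphi(s-r)$, which collapses the whole argument to the evaluation of the constant $C(u,\hat u)$ in the three cases. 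Your route is more streamlined and in fact slightly more general: it only uses boundedness of $h_1,h_2$ together with their continuity at the single point $u$, whereas the paper's error estimates invoke uniform continuity on all of $\R$. One remark, though: your closing claim that the evenness of $\varphi$ is \emph{essential} in the diagonal case $u=\hat u$ is contradicted by your own computation. The identity $C=\int\varphi\,\Phi=-\tfrac12\bigl[\Phi^2\bigr]_{-\infty}^{+\infty}=\tfrac12$ uses only $\Phi(-\infty)=1$ and $\Phi(+\infty)=0$, i.e.\ that $\varphi$ is a probability density; so your argument actually proves the lemma for arbitrary, not necessarily symmetric, mollifiers. It is the paper's computation of the diagonal case that genuinely exploits the symmetry $\varphi_\epsilon(s)=\varphi_\epsilon(-s)$ (once to write $\int_0^{\infty}\varphi_\epsilon=\tfrac12$ and once to cancel an odd remainder). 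This misattribution is harmless for the proof itself, but you should either delete the remark or note that symmetry is a convenience of the paper's computation rather than a necessity of yours.
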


\begin{proof}
We start noticing that 
\[
\begin{split}
I &:=\int
h_1(v)\, (\delta_u\ast\varphi_\epsilon)(v)\,
[h_2(\cdot)\chi(\cdot, \uh)]\ast\varphi_\epsilon(v)\, dv
\\ &=  
\iint
h_1(v) \varphi_\epsilon(v-u)
h_2(w)\chi(w,\uh)\varphi_\epsilon(v-w)\, dw\, dv
\\  &= 
\iint
h_1(v) \varphi_\epsilon(v-u)
[h_2(w) - h_2(u)]\chi(w,\uh)\varphi_\epsilon(v-w)\, dw\, dv
\\ & \quad+
h_2(u)\, \iint
h_1(v) \varphi_\epsilon(v-u)
\chi(w,\uh)\varphi_\epsilon(v-w)\, dw\, dv
\\& =:   I_1 + h_2(u)\, I_2\,.
\end{split}
\]
By exploiting the uniform continuity of \(h_2\) we obtain for some modulus of continuity \(\omega\), that 
\begin{equation}\label{ladri}
\begin{split}
|I_1| & \leq  \iint |h_1(v)|\, \varphi_\epsilon(v-u)\,
\omega(|v-w|)\, \varphi_\epsilon(v-w)\, dw\, dv
\\ & \leq  \|h_1\|_{\infty}
\int \left( \omega (|s|) \varphi_\epsilon(s)\, \int \varphi_\epsilon(s+w-u) dw\right)dt
\\ & \le
\|h_1\|_{\infty}
\int \omega(|s|) \varphi_\epsilon(s)\,ds
\end{split}
\end{equation}
and the last integral tends to $0$ as $\epsilon\to 0$. On the other hand
\begin{equation}\label{ladri2}
\lim_{\epsilon\to 0+} I_2
= h_1(u) \chi(u,\uh)\,.
\end{equation}
Indeed, from the estimate
\[
\begin{split}
|I_2 - h_1(u)\chi(u, \uh)| \leq {} &
\, \max_{v\in [u-\epsilon, u+\epsilon]} |h_1(v) - h_1(u)|
\\ & + |h_1(u)|  
\left|\iint \varphi_\epsilon(v-u)\chi(w,\uh)\varphi_\epsilon(v-w)\, dw dv - \chi(u, \uh)\right|
\end{split}
\]
and the uniform continuity of $h_1$
it is enough to show that
\begin{equation}\label{ladri3}
\lim_{\epsilon\to 0+} 
\iint \varphi_\epsilon(v-u)\chi(w,\uh)\varphi_\epsilon(v-w)\, dw dv = \chi(u, \uh)\,.
\end{equation}
If $u\neq \uh$, then for $\epsilon$ small enough we have that
$\chi(w, \uh) = \chi(u,\uh)$ for $w\in (u-2\epsilon, u+2\epsilon)$, so that $\chi$ is constant in the integral.
Then
\[
\iint \varphi_\epsilon(v-u)\chi(w,\uh)\varphi_\epsilon(v-w)\, dw dv
=
\chi(u,\uh)\iint \varphi_\epsilon(v-u)\varphi_\epsilon(v-w)\, dw dv = \chi(u, \uh)
\]
and \eqref{ladri3} follows. If $u=\uh$,
then the integral in \eqref{ladri3}, for $\epsilon > 0$
small enough, becomes
\[
\begin{split}
& \iint \varphi_\epsilon(v-u)\chi(w,u)\varphi_\epsilon(v-w)\, dw dv
= 
\int_{-\infty}^{+\infty} \varphi_\epsilon (v-u)
\int_{-\infty}^{u}\varphi_\epsilon(v-w)\, dw \, dv
\\ & =
\int_{-\infty}^{+\infty} \varphi_\epsilon (v-u)
\left[\frac{1}{2} + 
\int_{v}^{u}\varphi_\epsilon(v-w)\, dw \right] dv
= \frac{1}{2}\,,
\end{split}
\] 
where we have exploited that \(\varphi_\epsilon(s)=\varphi_\epsilon(-s)\). 
Equation~\eqref{ladri3} now follows form the definition of \(\chi\), \eqref{defchi}. Hence \eqref{ladri0} follows from \eqref{ladri} and \eqref{ladri2}.
\end{proof}


\section{Uniqueness}
\label{s:uniqueness}
In this Section we prove Theorem \ref{thm:uniquenessentropic}, to this end let us fix some notation. For $u_1, u_2$  two entropy solutions of \eqref{f:scalar},
with corresponding everywhere defined Borel representatives $\uh_1, \uh_2$, we will set  $f_i(t,x,v) := \chi(v,\uh_i(t,x))$ and $m_i$, $i=1,2$, for the corresponding  functions and measures appearing in the kinetic formulation. We will also set 
\[
f_{1\epsilon}(t,x,v) := (f_1(t,x,\cdot)\ast\varphi_{\epsilon})(v),
\qquad
f_{2\epsilon}(t,x,v) := (f_2(t,x,\cdot)\ast\varphi_{\epsilon})(v)\,.
\]
and
\[
m_{1\epsilon}(t,x,v) := (m_1(t,x,\cdot)\ast\varphi_{\epsilon})(v),
\qquad
m_{2\epsilon}(t,x,v) := (m_2(t,x,\cdot)\ast\varphi_{\epsilon})(v),
\]
where
$\varphi_{\epsilon}$ denotes the standard family of
mollifiers.

The following theorem is the main result of this section and,  by Cavalieri's principle, it immediately  implies Theorem \ref{thm:uniquenessentropic}.
\begin{theorem}\label{uniqueness}
Let $u_1, u_2$ be two 
entropy solution of \eqref{f:scalar},
with corresponding everywhere defined Borel 
representatives $\uh_1, \uh_2$.
Setting $f_i(t,x,v) := \chi(v,\uh_i(t,x))$, $i=1,2$, we have that
\[
\int_{\R^{n+1}}\ 
|f_1-f_2|(T,x,v) 
\ dx\,dv 
\leq \int_{\R^{n+1}}\ 
|f_1-f_2|(0,x,v)
\ dx\,dv \,.
\]
\end{theorem}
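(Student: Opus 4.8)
The plan is to run the kinetic \(L^1\)-contraction scheme of Lions--Perthame--Tadmor at the level of the functions \(f_i\), tracking the regularized quantity
\[
h_\epsilon := f_{1\epsilon}+f_{2\epsilon}-2f_{1\epsilon}f_{2\epsilon},
\]
which converges as \(\epsilon\to 0\) to \(f_1+f_2-2f_1f_2=|f_1-f_2|\); here we use that each \(f_i\) takes values in \(\{0,\tfrac12,1\}\), so that \(f_i^2=f_i\) for \(\L^1\)-a.e.\ \(v\). First I would derive the evolution equation for \(h_\epsilon\). Since \(\Div_{x,v}\a=0\), the operator \(L\) of Lemma~\ref{fepsilonquadro} obeys a Leibniz rule, and repeating the computation of that lemma for the product gives
\[
L(f_{1\epsilon}f_{2\epsilon})=f_{2\epsilon}^*\,Lf_{1\epsilon}+f_{1\epsilon}^*\,Lf_{2\epsilon}+\Resto[f_{1\epsilon},f_{2\epsilon}],
\]
where \(\Resto[f_{1\epsilon},f_{2\epsilon}]\) is the bilinear analogue of \eqref{resto0}, a measure concentrated on \((0,T)\times\mathcal N\times\R\); the contribution on the jump set off \(\mathcal N\) drops out because there \(A^+=A^-\) and because \((f_1f_2)^+-(f_1f_2)^-=f_2^*(f_1^+-f_1^-)+f_1^*(f_2^+-f_2^-)\). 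Recalling \(Lf_{i\epsilon}=\partial_v m_{i\epsilon}+r_{i\epsilon}\) from \eqref{feps}, we obtain
\[
Lh_\epsilon=(1-2f_{2\epsilon}^*)(\partial_v m_{1\epsilon}+r_{1\epsilon})+(1-2f_{1\epsilon}^*)(\partial_v m_{2\epsilon}+r_{2\epsilon})-2\Resto[f_{1\epsilon},f_{2\epsilon}].
\]

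Next I would test this identity against \(\eta_\delta(t)\psi_R(x,v)\), with \(\psi_R\) as in \eqref{f:testf}, and let \(\delta\to 0\), \(\epsilon\to 0\), \(R\to+\infty\), arguing exactly as in Lemma~\ref{prop.19}. The term coming from \(\partial_t h_\epsilon\) produces the time-boundary contribution, which in the limit equals \(\int|f_1-f_2|(T)-\int|f_1-f_2|(0)\); the transport term \(\int h_\epsilon\,\nabla_{x,v}\psi_R\cdot d\a\) vanishes as \(R\to+\infty\) by the finiteness \eqref{compact} and \(|a_{n+1}|\aac\L^n+\sigma\); and the commutators \(r_{i\epsilon}\) vanish by \eqref{eq:zero}. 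For the kinetic measures I would integrate by parts in \(v\): since \(\partial_v f_{i\epsilon}^*=-\tfrac12(\delta_{u_i^+}+\delta_{u_i^-})\ast\varphi_\epsilon\) (cf.\ the definition of \(\nu\) in Lemma~\ref{prop.19}), the boundary piece carrying \(\partial_v\psi_R\) again vanishes as \(R\to+\infty\) because \(m_i\) has finite mass. Writing \(\nu_i:=\delta_{u_i^+}+\delta_{u_i^-}\), one is then left with
\begin{multline*}
\int_{\R^{n+1}}|f_1-f_2|(T,x,v)\,dx\,dv-\int_{\R^{n+1}}|f_1-f_2|(0,x,v)\,dx\,dv\\
=-\lim_{R\to+\infty}\lim_{\epsilon\to 0}\Big[\int_0^T\!\!\int_{\R^{n+1}}(\nu_2\ast\varphi_\epsilon)\,\psi_R\,dm_{1\epsilon}
+\int_0^T\!\!\int_{\R^{n+1}}(\nu_1\ast\varphi_\epsilon)\,\psi_R\,dm_{2\epsilon}\\
+2\int_0^T\!\!\int_{\R^{n+1}}\psi_R\,d\Resto[f_{1\epsilon},f_{2\epsilon}]\Big],
\end{multline*}
so it suffices to prove that the bracket is nonnegative in the limit.

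Splitting each \(m_i=m_i\res\big((0,T)\times(\R^n\setminus\mathcal N)\times\R\big)+m_i\res\big((0,T)\times\mathcal N\times\R\big)\), the interior parts contribute nonnegative terms, since \(\nu_j\ast\varphi_\epsilon\ge 0\), \(\psi_R\ge 0\) and \(m_i\ge 0\) (this is the classical sign of the kinetic defect measure). The remaining terms are all supported on \((0,T)\times\mathcal N\times\R\), and there I would pass to the limit using Lemma~\ref{tecnico}, exactly as in the derivation of \eqref{treno2}; the \emph{cross} pairing of \(\nu_2\) with \(m_{1\epsilon}\) (and of \(\nu_1\) with \(m_{2\epsilon}\)) evaluates the \(\mathcal N\)-part of each kinetic measure at the traces of the \emph{other} solution, and, combined with \(\Resto[f_{1\epsilon},f_{2\epsilon}]\), reduces the whole \(\mathcal N\)-contribution to an integral \(\int_0^T\!\int_{\mathcal N}\mathcal Q(u_1,u_2)\,d\Haus{n-1}\,dt\), where \(\mathcal Q\) is an explicit expression in the six values \(u_1^\pm,u_2^\pm,\uh_1,\uh_2\) and the one-sided fluxes \(A^\pm\) (the bilinear counterpart of \eqref{defQ}).

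The main obstacle is the pointwise sign of \(\mathcal Q\): one must show \(\mathcal Q(u_1,u_2)\ge 0\) for \(\Haus n\)-a.e.\ \((t,x)\in(0,T)\times\mathcal N\), which is exactly where the analysis of Section~\ref{s:analysis} is needed. Fixing such a point, I would run a case analysis on the relative orderings of \(u_1^-,u_1^+,\uh_1\) and of \(u_2^-,u_2^+,\uh_2\), using the Rankine--Hugoniot relation \eqref{eq:RH2} for each solution together with the entropy inequalities \eqref{f:EC1}, i.e.\ the subcases \eqref{Subcase1a(i)}--\eqref{Subcase2c(vi)}, to bound each flux combination occurring in \(\mathcal Q\) by an admissible sign condition; symmetrizing in \(1\leftrightarrow2\) and collecting the cases then yields \(\mathcal Q\ge 0\). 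This makes the bracket nonnegative and proves Theorem~\ref{uniqueness}; Theorem~\ref{thm:uniquenessentropic} follows by Cavalieri's principle, since \(\int_{\R}|f_1-f_2|(t,x,v)\,dv=|u_1(t,x)-u_2(t,x)|\).
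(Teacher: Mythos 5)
Your kinetic scheme is sound, and it is essentially the paper's own scheme (Proposition~\ref{uniqueness1} and the proof of Theorem~\ref{uniqueness}) up to algebra: since the remainder $\Resto[\cdot]$ of Lemma~\ref{fepsilonquadro} is quadratic in $\fe$, polarizing that lemma (i.e.\ applying it to $f_{1\epsilon}+f_{2\epsilon}$, $f_{1\epsilon}$ and $f_{2\epsilon}$ and subtracting) gives exactly your product rule $L(f_{1\epsilon}f_{2\epsilon})=f_{2\epsilon}^*Lf_{1\epsilon}+f_{1\epsilon}^*Lf_{2\epsilon}+\Resto[f_{1\epsilon},f_{2\epsilon}]$, and your $h_\epsilon$ differs from the paper's $(f_{1\epsilon}-f_{2\epsilon})^2$ by $(f_{1\epsilon}-f_{1\epsilon}^2)+(f_{2\epsilon}-f_{2\epsilon}^2)\to 0$. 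What you gain is cosmetic but real: the self-pairings $\int(\nu_i\ast\varphi_\epsilon)\psi_R\,dm_{i\epsilon}$ and the diagonal remainders, which in the paper produce the terms $Q(u_1)+Q(u_2)$ of Lemma~\ref{prop.19} and are eventually cancelled, never appear. All the limiting steps you list (commutators via \eqref{eq:zero}, transport and $\partial_v\psi_R$ terms via finiteness of $\sigma$ and of the masses of $m_i$, positivity of the part of $m_i$ away from $\mathcal N$) are correct and are performed identically in the paper.

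The gap is the last step, and it is not a minor one: you never write down $\mathcal Q$ and never prove $\mathcal Q\ge 0$, which is where the theorem actually lives --- for merely distributional solutions the statement is false, so the admissibility conditions of Section~\ref{s:analysis} must enter precisely here. Three concrete items are missing. First, to apply Lemma~\ref{tecnico} to the cross pairings you need the explicit expression of $m_i\res\big((0,T)\times\mathcal N\times\R\big)$, which must be derived from \eqref{eq:8} together with the chain rule \eqref{eq:chainrule2} (this is the measure $m_1^s$ computed inside the proof of Proposition~\ref{uniqueness1}). Second, the resulting expression contains a large number of flux terms in the six values $u_i^{\pm},\uh_i$; before any case analysis one must check that all terms in $A^+(u_j^-)$ and $A^-(u_j^+)$ cancel, and use Rankine--Hugoniot \eqref{eq:RH2} to combine $A^+(u_j^+)$ with $A^-(u_j^-)$, reducing everything to $\W=(\chi^+-\chi^-)\big[A^+(u_2^+)-A^+(u_1^+)\big]$ as in \eqref{f:restopos}; without this reduction the ``case analysis'' has no tractable structure. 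Third, and most importantly, the sign of this quantity is not obtained by bounding each flux combination with the conditions of each solution separately and then ``symmetrizing in $1\leftrightarrow 2$'': in the critical configurations (e.g.\ $u_1^+<u_2^-<\uh_1<u_2^+<u_1^-$ inside case \eqref{f:c2}, or the analogous subcases of \eqref{f:c3}) one must evaluate the entropy inequalities of $u_2$ at the level $c=\uh_1$, that is at the Borel representative of the \emph{other} solution, and chain them with those of $u_1$ at the same $c$, obtaining two-step bounds such as $A^-(u_2^-)\le A^-(\uh_1)\le A^-(u_1^-)$ (via \eqref{Subcase2a(ii)} applied to $u_2$ and \eqref{Subcase1b(iii)} applied to $u_1$). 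This interleaving of the two solutions through $\uh_1,\uh_2$ is the crux of the paper's proof; your outline does not contain it, and until it is carried out the proposal is a (correct) reduction of the theorem to the key pointwise inequality, not a proof of it.
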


In order to prove Theorem \ref{uniqueness} we need some preliminary results concerning the interaction of two kinetic solutions:

\begin{proposition} 
With the notation above, for every test function $\psi(x,v)$ we have that
\begin{equation}\label{f:contr1}
\begin{split}
\int_{\R^{n+1}}\ 
|f_1-f_2|(T,x,v)&
\, \psi(x,v)\ dx\,dv 
\leq \int_{\R^{n+1}}\ 
|f_1-f_2|(0,x,v)
\, \psi(x,v)\ dx\,dv 
\\ & 
-\limsup_{\epsilon\to0}\int_0^T \int_{\R^{n+1}} 
2\partial_v[(f_{1\epsilon}-f_{2\epsilon})^*\psi]
d(m_{1\epsilon}-m_{2\epsilon})(t,x,v)
\\&
+\limsup_{\epsilon\to0}\int_0^T \int_{\R^{n+1}}\psi\, d\Resto[f_{1\epsilon}-f_{2\epsilon}](t,x,v)
\\ & -
\int_0^T \int_{\R^{n+1}} |f_1-f_2|\,
\nabla_{x}\psi\cdot \partial_v \A(x,v)\, dx\, dv\, dt
\\ &+
 \int_0^T \int_{\R^{n+1}} |f_1-f_2|\,
\partial_{v}\psi\, da_{n+1}(x,v)\, dt
\,,
\end{split}
\end{equation}
where \(\Resto[f_{1\epsilon}-f_{2\epsilon}]\) is defined according to \eqref{resto0}.
\end{proposition}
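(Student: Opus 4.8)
The plan is to exploit the elementary identity $|f_1-f_2|=(f_1-f_2)^2$, which holds $\Leb{n+1}$-a.e.\ because each $f_i=\chi(v,\uh_i)$ takes values in $\{0,1\}$ outside the $\Leb{n+1}$-negligible graphs $\{v=\uh_i(t,x)\}$. It therefore suffices to track the evolution of $(f_{1\eps}-f_{2\eps})^2$ and pass to the limit $\eps\to0$. Set $g_\eps:=f_{1\eps}-f_{2\eps}$. By linearity of $L$ and the mollified kinetic equations \eqref{feps} for $f_{1\eps}$ and $f_{2\eps}$ one has
\[
Lg_\eps=\partial_v(m_{1\eps}-m_{2\eps})+(r_{1\eps}-r_{2\eps}).
\]
Since the chain-rule computation of Lemma~\ref{fepsilonquadro} uses only that $g_\eps$ is $BV_{loc}$ with $v\mapsto g_\eps$ of class $C^1$, together with the algebraic identity $(g^2)^+-(g^2)^-=2g^*(g^+-g^-)$ valid for every scalar $BV$ function on its jump set, the same lemma applies verbatim to $g_\eps$ and gives
\[
Lg_\eps^2=2g_\eps^*\big[\partial_v(m_{1\eps}-m_{2\eps})+(r_{1\eps}-r_{2\eps})\big]+\Resto[g_\eps].
\]

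I would then test this identity against $\eta_\delta(t)\psi(x,v)$, with $\eta_\delta\in C_c^1(0,T)$ and $\eta_\delta\equiv1$ on $[\delta,T-\delta]$, and integrate by parts. The time derivative produces, after $\delta\to0$ and using $u_i\in C([0,T];L^1(\R^n))$, the boundary contributions $\int g_\eps^2(T)\psi$ and $\int g_\eps^2(0)\psi$; the transport term $\Div_{x,v}(\a g_\eps^2)$ yields $\int_0^T\!\int g_\eps^2\,\nabla_{x,v}\psi\cdot d\a$, which by the definition of $\a$ decomposes into the $\nabla_x\psi\cdot\partial_v\A$ and the $\partial_v\psi\,da_{n+1}$ pieces appearing in \eqref{f:contr1} (the precise signs being a routine bookkeeping, aided by $\Div_{x,v}\a=0$); and the $m$-term, integrated by parts in $v$, becomes $-\int_0^T\!\int 2\partial_v[g_\eps^*\psi]\,d(m_{1\eps}-m_{2\eps})$, where $g_\eps^*=g_\eps$ is genuinely $C^1$ in $v$. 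This is an exact identity for every $\eps$.

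It remains to pass $\eps\to0$. The boundary terms converge to $\int|f_1-f_2|(T)\psi$ and $\int|f_1-f_2|(0)\psi$ since $f_{i\eps}\to f_i$ at every $v$-continuity point and $\psi$ is compactly supported (dominated convergence). The transport term converges to $\int_0^T\!\int|f_1-f_2|\,\nabla_{x,v}\psi\cdot d\a$, using, as at the end of the proof of Lemma~\ref{fepsilon}, that $g_\eps^2\to|f_1-f_2|$ for $\Leb{1}_t\times(\Leb{n}+\sigma)_x\times\Leb{1}_v$-a.e.\ point together with $|a_{n+1}|\aac\Leb{n}+\sigma$ (see \eqref{compact}). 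The commutator term $\int_0^T\!\int 2g_\eps^*\psi\,d(r_{1\eps}-r_{2\eps})$ vanishes in the limit because $|g_\eps^*|$ is bounded and $\psi$ has compact support, by \eqref{eq:zero}. The two remaining terms — the $m$-term $M_\eps$ and the remainder $R_\eps=\int_0^T\!\int\psi\,d\Resto[g_\eps]$ — need not converge separately; since every other term has an exact limit, their sum converges, and the claimed inequality follows from subadditivity of $\limsup$, namely $\lim_\eps(M_\eps+R_\eps)=\limsup_\eps(M_\eps+R_\eps)\le\limsup_\eps M_\eps+\limsup_\eps R_\eps$, which delivers \eqref{f:contr1}.

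The main obstacle is twofold. First, one must genuinely justify that Lemma~\ref{fepsilonquadro} transfers to $g_\eps=f_{1\eps}-f_{2\eps}$: the point is to identify $J_{g_\eps}\subset(J_{u_1}\cup J_{u_2})\times\R$ and to check that the ``interior'' jump contribution supported on $J_{g_\eps}\setminus((0,T)\times\mathcal N\times\R)$ — absent from the final form \eqref{resto0} — still cancels, which rests on the algebraic identity above. Second, the limit passages are delicate precisely because $f_{i\eps}$ converges only pointwise a.e.\ and not uniformly, so each term must be handled by dominated convergence with respect to the correct reference measure; the transport term in particular forces control of the limit with respect to $\Leb{n}+\sigma$ rather than $\Leb{n}$ alone, owing to the part of $a_{n+1}$ concentrated on $\mathcal N$. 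Finally, one should keep in mind that separating the single convergent limit into two $\limsup$'s only yields an upper bound; this is harmless here, since only the inequality \eqref{f:contr1} is asserted, but it is exactly the step that must not be read as an equality.
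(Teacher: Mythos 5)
Your proposal follows the paper's own proof essentially step by step: apply Lemma~\ref{fepsilonquadro} to $g_\epsilon=f_{1\epsilon}-f_{2\epsilon}$ (the paper does this implicitly in the sentence ``Since $f_{1\epsilon}-f_{2\epsilon}$ is a solution of \eqref{fepsquadro1}\dots'', and your justification via $J_{g_\epsilon}\subset (J_{u_1}\cup J_{u_2})\times\R$ and the algebraic identity $(g^2)^+-(g^2)^-=2g^*(g^+-g^-)$ is exactly the point that makes this legitimate), then test against $\eta_\delta(t)\psi(x,v)$, pass to the limit in the boundary, transport and commutator terms by the same arguments as in Lemma~\ref{prop.19} and Proposition~\ref{fepsilon}, and control the remaining $m$-term and $\Resto$-term by upper limits. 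The sign bookkeeping in the transport term that you leave as routine is indeed immaterial (the paper itself is not consistent between \eqref{110} and \eqref{f:contr1} on this point, and these terms vanish anyway for the test functions used later).

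There is, however, one genuine discrepancy in your last step. You split the convergent sum as $\lim_\epsilon(M_\epsilon+R_\epsilon)=\limsup_\epsilon(M_\epsilon+R_\epsilon)\le\limsup_\epsilon M_\epsilon+\limsup_\epsilon R_\epsilon$, where your $M_\epsilon:=-\int_0^T\int 2\partial_v[(f_{1\epsilon}-f_{2\epsilon})^*\psi]\,d(m_{1\epsilon}-m_{2\epsilon})$ carries the minus sign \emph{inside}. This yields the bound with $\limsup_\epsilon(-\int\cdots)=-\liminf_\epsilon\int\cdots$, whereas \eqref{f:contr1} asserts the stronger bound with $-\limsup_\epsilon\int\cdots$ (minus sign \emph{outside} the limsup); the two differ whenever the $m$-term does not converge. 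To obtain the statement as written, replace the symmetric limsup splitting by the asymmetric one,
\[
\lim_{\epsilon\to 0}(M_\epsilon+R_\epsilon)=\liminf_{\epsilon\to 0}(M_\epsilon+R_\epsilon)\le \liminf_{\epsilon\to 0} M_\epsilon+\limsup_{\epsilon\to 0} R_\epsilon\,,
\]
and observe that $\liminf_\epsilon M_\epsilon=-\limsup_\epsilon \int_0^T\int 2\partial_v[(f_{1\epsilon}-f_{2\epsilon})^*\psi]\,d(m_{1\epsilon}-m_{2\epsilon})$. This is a one-line fix, and in fairness the weaker inequality you actually proved would still suffice for the downstream application in Proposition~\ref{uniqueness1}, where one only produces an upper bound for $\limsup_\epsilon$ of the term with the minus sign inside; but as literally written, your final inequality is not the one claimed in the proposition.
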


\begin{proof}
We recall that
$$
Lf_{1\epsilon}=\partial_{v}m_{1\epsilon}+r_{1\epsilon}\,,\qquad\ Lf_{2\epsilon}=\partial_{v}m_{2\epsilon}+r_{2\epsilon}\,.
$$ 
Since  $f_{1\epsilon}-f_{2\epsilon}$ is a solution of \eqref{fepsquadro1}, by Lemma \ref{fepsilonquadro} we see that
\[
\begin{split}
&\partial_t(f_{1\epsilon}-f_{2\epsilon})^2+
\Div_{x,v}[\a(f_{1\epsilon}-f_{2\epsilon})^2]\cr
& = 2(f_{1\epsilon}-f_{2\epsilon})^*L(f_{1\epsilon}-f_{2\epsilon})+\Resto[f_{1\epsilon}-f_{2\epsilon}]\,.
\end{split}
\]
Hence, by arguing as in Lemma \ref{prop.19},  for every test function $\psi(x,v)$ we have that 
\begin{equation}\label{110}
\begin{split}
\int_{\R^{n+1}} & (f_{1\epsilon}-f_{2\epsilon})^2(T,x,v)\psi(x,v)dx\,dv-
\int_{\R^{n+1}}(f_{1\epsilon}-f_{2\epsilon})^2(0,x,v)\psi(x,v)dx\,dv
\\ = {} & -\int_0^T\int_{\R^{n+1}} (f_{1\epsilon}-f_{2\epsilon})^2\, \nabla_{x,v} \psi(x,v) \cdot d \a(x,v)\,dt
\\ & +
2\int_0^T\int_{\R^{n+1}}\ 
(f_{1\epsilon}-f_{2\epsilon})^*L(f_{1\epsilon}-f_{2\epsilon})
\ \psi(x,v)\ dx\,dv\,dt
\\ & +
\int_0^T\int_{\R^{n+1}}\ 
\ \psi(x,v)\ d (\Resto[f_{1\epsilon}-f_{2\epsilon}])
\\ =: {} & I_1 + I_2 + I_3\,.
\end{split}
\end{equation}
Noticing that $(f_{1\epsilon}-f_{2\epsilon})^2\to (f_1 - f_2)^2$, 
that the equality
$(f_1 - f_2)^2=|f_1 - f_2|$  
holds for \(\L^1_t\times (\L^n+\sigma)\times \L^1_v\) 
almost every point, 
and that \(|\a|\aac (\L^n+\sigma)\times \L^1_v\), we can pass to the limit in \(I_1\) 
obtaining the last two integrals in the right hand side of \eqref{f:contr1}.
In the same way, the left hand side of  \eqref{110} tends, as $\epsilon\to 0$,  to
\[
\int_{\R^{n+1}}\ 
|f_{1}-f_2|(T,x,v)\psi(x,v)\,dx\,dv-
\int_{\R^{n+1}}\ 
|f_{1}-f_{2}|(0,x,v)\psi(x,v)
\ dx\,dv\,.
\]
Let us now consider \(I_2\) and \(I_3\). 
Since, by Proposition  \ref{fepsilon}, 
\[
Lf_{1\epsilon}=\partial_{v}m_{1\epsilon}+r_{1\epsilon}\,\,,\quad\ Lf_{2\epsilon}=\partial_{v}m_{2\epsilon}+r_{2\epsilon}
\] 
we infer that inequality \eqref{f:contr1} holds.
\end{proof}

\begin{proposition}\label{uniqueness1}
Let $u_1, u_2$ be two 
entropy solution of \eqref{f:scalar},
with corresponding representatives $\uh_1, \uh_2$.
Setting $f_i(t,x,v) := \chi(v,\uh_i(t,x))$, $i=1,2$, we have that

\begin{equation}\label{f:contr}
\begin{split}
\int_{\R^{n+1}}\ 
|f_1-f_2|(T,x,v) 
\ dx\,dv 
&\leq \int_{\R^{n+1}}\ 
|f_1-f_2|(0,x,v)
\ dx\,dv \\
&+\int_0^T \int_{\mathcal N} \W(u_1,u_2)d\H^{n-1}dt
\,,
\end{split}
\end{equation}
where 
\begin{equation}\label{57bis}
\begin{split}
 \W(u_1,u_2)
  := {} 
& A^+(u_1^+)\big[-2\chi(u_1^+,u_2^+)+2\chi(u_1^-,u_2^-)\big]\\
+ &
A^+(u_2^+)\big[-2\chi(u_2^+,u_1^+)+2\chi(u_2^-,u_1^-)\big]\,.
\end{split}
\end{equation}
\end{proposition}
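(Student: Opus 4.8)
The plan is to specialize the general inequality \eqref{f:contr1} to the test functions $\psi_R$ of \eqref{f:testf} and then let $R\to+\infty$. First I would dispose of the last two integrals in \eqref{f:contr1}: since $|f_1-f_2|(t,x,\cdot)$ is supported in the fixed interval $[-V,V]$, with $V=\max(\|u_1\|_\infty,\|u_2\|_\infty)$, and lies in $L^1$, the integral carrying $\nabla_x\psi_R$ vanishes by dominated convergence (because $|\nabla_x\psi_R|\le C/R$ and $|\partial_v\A|\le M$), while the one carrying $\partial_v\psi_R$ is supported in $\{R\le|v|\le R+1\}$ and is identically zero once $R>V$. Writing $\partial_v[(f_{1\epsilon}-f_{2\epsilon})^*\psi_R]=\partial_v(f_{1\epsilon}-f_{2\epsilon})^*\,\psi_R+(f_{1\epsilon}-f_{2\epsilon})^*\partial_v\psi_R$, the contribution of the second summand to the $m$-term is bounded by $2(m_{1\epsilon}+m_{2\epsilon})([0,T]\times\R^n\times I_R)$ and vanishes as $\epsilon\to0$, $R\to+\infty$ since $m_1,m_2$ have finite mass. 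The whole matter thus reduces to bounding the surviving $m$-term plus the $\Resto$-term by $\int_0^T\int_{\mathcal N}\W(u_1,u_2)\,d\H^{n-1}\,dt$.

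For the $m$-term I would use $-2\partial_v(f_{1\epsilon}-f_{2\epsilon})^*=(\nu_1-\nu_2)\ast\varphi_\epsilon$, with $\nu_i:=\delta_{u_i^+}+\delta_{u_i^-}$, and expand the product against $d(m_{1\epsilon}-m_{2\epsilon})$ into four pieces. The two \emph{diagonal} pieces $\int(\nu_i\ast\varphi_\epsilon)\psi_R\,dm_{i\epsilon}$ are precisely the left-hand side of the first identity in \eqref{f:prop19}, so by Lemma \ref{prop.19} they converge to $\int_0^T\int_{\mathcal N}Q(u_i)\,d\H^{n-1}\,dt$. The two \emph{mixed} pieces $-\int(\nu_1\ast\varphi_\epsilon)\psi_R\,dm_{2\epsilon}$ and $-\int(\nu_2\ast\varphi_\epsilon)\psi_R\,dm_{1\epsilon}$ are the crucial point: as $\nu_i\ast\varphi_\epsilon\ge0$, $\psi_R\ge0$ and $m_{j\epsilon}\ge0$, they are nonpositive, hence may simply be discarded while keeping the correct direction of \eqref{f:contr}. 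Therefore the $m$-term is bounded above by $\int_0^T\int_{\mathcal N}[Q(u_1)+Q(u_2)]\,d\H^{n-1}\,dt$.

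For the $\Resto$-term I would compute $\lim_\epsilon\int\psi_R\,d\Resto[f_{1\epsilon}-f_{2\epsilon}]$ directly from \eqref{resto0}. Since $\Resto[\,\cdot\,]$ is a quadratic form in its argument, I would substitute $\partial_v(f_{i\epsilon}^\pm-f_{i\epsilon})=[\delta_{\uh_i}-\delta_{u_i^\pm}]\ast\varphi_\epsilon$ and $\partial_v f_{i\epsilon}=-\delta_{\uh_i}\ast\varphi_\epsilon$, and pass to the limit on each product of mollified Dirac masses by Lemma \ref{tecnico} (taking $h_1,h_2$ among the functions $v\mapsto A^\pm(x,v)$, $v\mapsto(\partial_v A)^\pm(x,v)$, which are uniformly continuous on the relevant bounded $v$-range). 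By the very computation of \eqref{treno1}--\eqref{treno2}, the diagonal part of $\Resto[f_{1\epsilon}-f_{2\epsilon}]$ integrates to $-\int\int_{\mathcal N}[Q(u_1)+Q(u_2)]$, which cancels exactly the diagonal contribution of the $m$-term; after this cancellation only the bilinear cross part survives.

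It then remains the purely algebraic task of simplifying this cross part. Here I would invoke the Rankine--Hugoniot relation \eqref{eq:RH2}, valid $\H^n$-a.e.\ on $((0,T)\times\mathcal N)\cap J_{u_i}$, together with the elementary identity $\chi(a,b)+\chi(b,a)=1$ for $\chi$ as in \eqref{defchi}, to collapse every term containing $\uh_1,\uh_2$ or $A^-$; what is left is precisely $\W(u_1,u_2)$ of \eqref{57bis}. Combining the three estimates and letting $R\to+\infty$ yields \eqref{f:contr}. I expect the genuine difficulty to lie entirely in this final step: correctly polarizing the quadratic form $\Resto$, tracking the eight $\chi$-weighted terms attached to each solution, and checking that all $A^-$ and $\uh$ dependence really drops out so as to leave the compact expression \eqref{57bis}. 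The conceptual heart of the argument, by contrast, is the sign of the mixed $m$-terms, which is exactly what converts the identity of Lemma \ref{prop.19} into the one-sided estimate \eqref{f:contr}.
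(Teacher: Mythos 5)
Your overall skeleton matches the paper's: start from \eqref{f:contr1} with $\psi=\psi_R$, kill the two flux integrals and the $\partial_v\psi_R$ contributions, use Lemma~\ref{prop.19} to identify the diagonal $m$-terms with $Q(u_1)+Q(u_2)$, polarize $\Resto[f_{1\epsilon}-f_{2\epsilon}]$ so that its diagonal part cancels $-[Q(u_1)+Q(u_2)]$, and finish with an algebraic simplification via \eqref{eq:RH2} and $\chi(a,b)+\chi(b,a)=1$. However, there is a genuine gap at what you yourself call the conceptual heart: the treatment of the mixed terms $-\int(\nu_1\ast\varphi_\epsilon)\psi_R\,dm_{2\epsilon}$ and $-\int(\nu_2\ast\varphi_\epsilon)\psi_R\,dm_{1\epsilon}$. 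You discard them entirely because they are nonpositive. This does preserve the direction of the inequality, but it destroys the proof: what then survives is only the bilinear cross part of $\Resto$, which still contains the terms $\chi(u_1^{\pm},\uh_2)\bigl[A^+(u_1^{\pm})-A^-(u_1^{\pm})\bigr]$, $\chi(u_2^{\pm},\uh_1)\bigl[A^+(u_2^{\pm})-A^-(u_2^{\pm})\bigr]$ and, more generally, the ``wrong-side'' traces $A^-(u_j^+)$, $A^+(u_j^-)$. The Rankine--Hugoniot relation only identifies $A^+(u_j^+)$ with $A^-(u_j^-)$; it gives no control on $A^-(u_j^+)$, $A^+(u_j^-)$, nor on anything involving the Borel representatives $\uh_1,\uh_2$. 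So the claimed collapse to \eqref{57bis} is false, and the bound you obtain is strictly weaker than \eqref{f:contr} (it exceeds $\W$ by minus the limits of the discarded terms, which are nonpositive).

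The paper's resolution is more delicate: instead of discarding $I_3$ and $I_4$, it bounds them from above by keeping the part of $m_{i\epsilon}$ that lives on $\mathcal N$, namely $I_3\le -\int(\nu_2\ast\varphi_\epsilon)\psi_R\,dm^s_{1\epsilon}$ where $m^s_1$ is the restriction to $\mathcal N$ of the singular part of $m_1$. This is legitimate because $m_{1\epsilon}\ge m^s_{1\epsilon}\ge 0$ and the integrand is nonnegative; crucially, $m^s_1$ is not an arbitrary nonnegative measure but has an \emph{explicit} expression in terms of $A^{\pm}$, $\chi(\cdot,u_1^{\pm})$ and $\chi(\cdot,\uh_1)$, coming from the kinetic equation on $(0,T)\times\mathcal N\times\R$. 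Passing to the limit in these retained pieces via Lemma~\ref{tecnico} produces exactly the cross terms (twelve for each of $I_3$, $I_4$) that cancel the $\uh_1,\uh_2$-dependence and the $A^+(u_j^-)$, $A^-(u_j^+)$ terms left over from the cross part of $\Resto$; only after this cancellation does Rankine--Hugoniot reduce everything to \eqref{57bis}. In short: you must keep the singular-on-$\mathcal N$ portion of the mixed $m$-terms and compute it explicitly; throwing all of $I_3$, $I_4$ away makes the final algebra impossible.
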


\begin{proof}
Let us start from inequality \eqref{f:contr1} using the
test function $\psi = \psi_R$ defined in \eqref{f:testf}.
It is easy to show that, as $R\to +\infty$,
the last two integrals in \eqref{f:contr1} goes to $0$. Hence, we only need to estimate the contribution of 
 the two terms
\[
I(\epsilon,R) := -\int_0^T \int_{\R^{n+1}} 
2\partial_v\big[(f_{1\epsilon}-f_{2\epsilon})\psi_R\big]
d(m_{1\epsilon}-m_{2\epsilon})(t,x,v)
\]
and
\[
H(\epsilon,R) := \int_0^T \int_{\R^{n+1}}\psi_R\, d(\Resto[f_{1\epsilon}-f_{2\epsilon}])(t,x,v)
\]
for $\epsilon\to 0$ and $R\to +\infty$. We start noticing that 
\begin{equation}\label{f:eqI}
\begin{split}
I(\epsilon,R) = {} & -\int_0^T \int_{\R^{n+1}} 
2\partial_v[(f_{1\epsilon}-f_{2\epsilon})^*]\psi_R
d(m_{1\epsilon}-m_{2\epsilon})(t,x,v)
\\ & -\int_0^T \int_{\R^{n+1}} 
2(f_{1\epsilon}-f_{2\epsilon})^*\partial_v \psi_R
d(m_{1\epsilon}-m_{2\epsilon})(t,x,v)\,,
\end{split}
\end{equation}
and,
reasoning as
in the final part of the proof of Lemma~\ref{prop.19},
the last integral goes to $0$ as 
$\epsilon\to 0$ and $R\to +\infty$. 
Let us compute the first term in \eqref{f:eqI}:
\[
\begin{split}
\\  
- \int_0^T & \int_{\R^{n+1}} 
2\partial_v[(f_{1\epsilon}-f_{2\epsilon})^*]\psi_R
d(m_{1\epsilon}-m_{2\epsilon})
\\ = {} &
-\int_0^T \int_{\R^{n+1}} 
[\delta_{u^+_2}(v)+\delta_{u^-_2}(v)-\delta_{u^+_1}(v)-\delta_{u^-_1}(v)
]*\varphi_\epsilon(v)
\,\psi_R\,
d(m_{1\epsilon}-m_{2\epsilon})
\\ = {} &
\int_0^T \int_{\R^{n+1}} 
[\delta_{u^+_1}(v)+\delta_{u^-_1}(v)
]*\varphi_\epsilon(v)
\,\psi_R\,
dm_{1\epsilon}
\\&
+
\int_0^T \int_{\R^{n+1}} 
[\delta_{u^+_2}(v)+\delta_{u^-_2}(v)
]*\varphi_\epsilon(v)
\,\psi_R\,
dm_{2\epsilon}
\\&
-
\int_0^T \int_{\R^{n+1}} 
[\delta_{u^+_2}(v)+\delta_{u^-_2}(v)
]*\varphi_\epsilon(v)
\,\psi_R\,
dm_{1\epsilon}
\\&
-
\int_0^T \int_{\R^{n+1}} 
[\delta_{u^+_1}(v)+\delta_{u^-_1}(v)
]*\varphi_\epsilon(v)
\,\psi_R\,
dm_{2\epsilon}
\\ =: {} & 
I_1(\epsilon,R)+I_2(\epsilon,R)+I_3(\epsilon,R)+I_4(\epsilon,R)\,.
\end{split}
\]
By Lemma \ref{prop.19},
\[
\begin{split}
\limsup_{R\to \infty}&\limsup_{\epsilon\to 0}\,  I_1(\epsilon,R)+I_2(\epsilon,R)
\\
&\le \int_0^T \int_{\mathcal {N}} [Q(u_1)+Q(u_2)]\,d\mathcal \H^{n-1} \,dt\,,
\end{split}
\]
where \(Q(u)\) is defined in \eqref{defQ}. In order to estimate $I_3$, we note that  
\[
\begin{split}
I_3 & = -
\int_0^T \int_{\R^{n+1}} 
[\delta_{u^+_2}(v)+\delta_{u^-_2}(v)
]*\varphi_\epsilon(v)
\,\psi_R\,
dm_{1\epsilon}(t,x,v)
\\&
\leq-
\int_0^T \int_{\R^{n+1}} 
[\delta_{u^+_2}(v)+\delta_{u^-_2}(v)
]*\varphi_\epsilon(v)
\,\psi_R\,
dm^s_{1\epsilon}(t,x,v)\,,
\end{split}
\]
where
$m^s_{1\epsilon}(t,\cdot,v)$ is 
the restriction to $\mathcal{N}$
of the singular part 
of the measure $m_{1\epsilon}(t,\cdot,v)$, namely
$m^s_{1\epsilon}(t,\cdot,v) = m_1^s(t,\cdot,v)
* \varphi_{\epsilon}(v)$,
with
\[
\begin{split}
m_1^s(t,x,v) = &
\int_{0}^{v}\left[
\left(\partial_v A\right)^{+}(x,w)
(\chi(w,u_1^{+}(t,x)))
-\left(\partial_v A\right)^{-}(x,w)(\chi(w,u_1^{-}(t,x)))
\right]
d\H^{n-1}\,dw
\\ & 
-\left(A^{+}(x,v)-A^{-}(x,v)\right)
\chi(v,\uh_1(t,x))
d\H^{n-1}.
\end{split}
\]
Hence, the contribution given by $I_3$ to
\eqref{f:contr} can be estimated by
\begin{equation*}
\begin{split}
   -\limsup_{R\to+\infty} \limsup_{\epsilon\to 0}
\int_0^T \int_{\R^{n+1}} 
[\delta_{u^+_2}(v)+\delta_{u^-_2}(v)
]*\varphi_\epsilon(v)
\,\psi_R\,
dm_{1\epsilon}^s\,.
\end{split}
\end{equation*}
By the explicit expression of  $m_{1\epsilon}^s$ and by also taking into account  Lemma~\ref{tecnico}, we get
\begin{equation*}
\begin{split}
I_3 \le  {} & \int_0^T \int_{\mathcal {N}}\Big[
-\chi(u^+_2, u^+_1) A^+(u^+_2)
- \chi(u^+_1, u^+_2) A^+(u^+_1)
\\
 & \quad+
\chi(u^+_2, u^-_1) A^-(u^+_2)
+ \chi(u^-_1, u^+_2) A^-(u^-_1)
\\
 &\quad -
\chi(u^+_2, \uh_1) A^-(u^+_2)
+ \chi(u^+_2, \uh_1) A^+(u^+_2)\Big]\,
d\H^{n-1}
\\
 & +  \int_0^T \int_{\mathcal {N}}\Big[
-\chi(u^-_2, u^+_1) A^+(u^-_2)
- \chi(u^+_1, u^-_2) A^+(u^+_1)
\\
 &\quad +
\chi(u^-_2, u^-_1) A^-(u^-_2)
+ \chi(u^-_1, u^-_2) A^-(u^-_1)
\\
 & \quad -
\chi(u^-_2, \uh_1) A^-(u^-_2)
+ \chi(u^-_2, \uh_1) A^+(u^-_2)\Big]\,
d\H^{n-1}\,.
\end{split}
\end{equation*}
In the same way we can estimate the  contribution of  $I_4$. The contribution of \(H\) can then be estimated with the aid of Lemma \ref{tecnico}  by arguing as in the final part of the proof of Lemma \ref{prop.19}.  Therefore, by summing up all the estimates  we obtain that  \eqref{f:contr} holds with the following choice of \(\W(u_1,u_2)\) where, for the reader convenience,  the terms are grouped according to their provenience:  
\begin{align*}
& \W(u_1,u_2)\\
& \left.
\begin{aligned} 
&= \chi(u_1^-, u_1^+) A^+(u_1^-)
+ \chi(u_1^+, u_1^-) A^+(u_1^+)
- \chi(u_1^+, u_1^-) A^-(u_1^+)
- \chi(u_1^-, u_1^+) A^-(u_1^-)
\\ & 
- \chi(u_1^+, \uh_1) A^+(u_1^+)
+ \chi(u_1^+, \uh_1) A^-(u_1^+)
- \chi(u_1^-, \uh_1) A^+(u_1^-)
+ \chi(u_1^-, \uh_1) A^-(u_1^-)
\end{aligned}
\quad
\right\}I_1 &
\\
\\
& \left.
\begin{aligned}
& + \chi(u_2^-, u_2^+) A^+(u_2^-)
+ \chi(u_2^+, u_2^-) A^+(u_2^+)
- \chi(u_2^+, u_2^-) A^-(u_2^+)
- \chi(u_2^-, u_2^+) A^-(u_2^-)
\\ & 
- \chi(u_2^+, \uh_2) A^+(u_2^+)
+ \chi(u_2^+, \uh_2) A^-(u_2^+)
- \chi(u_2^-, \uh_2) A^+(u_2^-)
+ \chi(u_2^-, \uh_2) A^-(u_2^-)
\end{aligned}
\quad
\right\} I_2&
\\
\\
& \left.
\begin{aligned}
 &-\chi(u_1^+,u_2^+)A^{+}(u_1^+)-\chi(u_1^-,u_2^+)A^{+}(u_1^-)
-
\chi(u_2^+,u_1^+)A^{+}(u_2^+)-\chi(u_2^+,u_1^-)A^{+}(u_2^+)
\\ &
+
\chi(u_1^+,u_2^-)A^{-}(u_1^+)+\chi(u_1^-,u_2^-)A^{-}(u_1^-)
+
\chi(u_2^-,u_1^+)A^{-}(u_2^-)+\chi(u_2^-,u_1^-)A^{-}(u_2^-)
\\ &
+
\chi(u_1^+,\uh_2)A^{+}(u_1^+)-\chi(u_1^+,\uh_2)A^{-}(u_1^+)
+
\chi(u_1^-,\uh_2)A^{+}(u_1^-)-\chi(u_1^-,\uh_2)A^{-}(u_1^-)
\end{aligned}
\quad
\right\} I_3&
\\
\\
& \left.
\begin{aligned}
& -
\chi(u_2^+,u_1^+)A^{+}(u_2^+)-\chi(u_2^-,u_1^+)A^{+}(u_2^-)
-
\chi(u_1^+,u_2^+)A^{+}(u_1^+)-\chi(u_1^+,u_2^-)A^{+}(u_1^+)
\\ & 
+
\chi(u_2^+,u_1^-)A^{-}(u_2^+)+\chi(u_2^-,u_1^-)A^{-}(u_2^-)
+
\chi(u_1^-,u_2^+)A^{-}(u_1^-)+\chi(u_1^-,u_2^-)A^{-}(u_1^-)
\\ &
+
\chi(u_2^+,\uh_1)A^{+}(u_2^+)-\chi(u_2^+,\uh_1)A^{-}(u_2^+)
+
\chi(u_2^-,\uh_1)A^{+}(u_2^-)-\chi(u_2^-,\uh_1)A^{-}(u_2^-)
\end{aligned}
\quad
\right\} I_4&
\\
\\
& \left.
\begin{aligned}
& - [\chi(u_1^+,u_1^-)-\chi(u_1^+,\uh_1)-\chi(u_1^+,u_2^-)+\chi(u_1^+,\uh_2)]
\big[A^{+}(u_1^+) - A^-(u_1^+)\big]
\\ & +[\chi(u_2^+,u_1^-)-\chi(u_2^+,\uh_1)-\chi(u_2^+,u_2^-)+\chi(u_2^+,\uh_2)]
\big[A^{+}(u_2^+) - A^-(u_2^+)\big]
\\ & -[\chi(u_1^-,u_1^+)-\chi(u_1^-,\uh_1)-\chi(u_1^-,u_2^+)+\chi(u_1^-,\uh_2)]
\big[A^{+}(u_1^-) - A^-(u_1^-)\big]
\\ & +[\chi(u_2^-,u_1^+)-\chi(u_2^-,\uh_1)-\chi(u_2^-,u_2^+)+\chi(u_2^-,\uh_2)]
\big[A^{+}(u_2^-) - A^-(u_2^-)\big]\,.
\end{aligned}
\quad
\right\} H&
\end{align*}
We now note that  all terms in
$A^+(u_j^-)$ and $A^-(u_j^+)$, $j=1,2$,
cancel out. Using the Rankine-Hugoniot conditions \eqref{eq:RH2},  we can sum up the terms $A^+(u_j^+)$ with $A^-(u_j^-)$, 
obtaining (\ref{57bis}).
\end{proof}

We are now ready to prove Theorem \ref{uniqueness}.
\begin{proof}[Proof  of Theorem \ref{uniqueness}]
By Proposition \ref{uniqueness1}, it remains to prove that
\begin{equation}\label{f:restototale}
A^+(u_1^+)\big[-\chi(u_1^+,u_2^+)+\chi(u_1^-,u_2^-)\big]+
A^+(u_2^+)\big[-\chi(u_2^+,u_1^+)+\chi(u_2^-,u_1^-)\big]\leq 0\,.
\end{equation}
Since $u_1$, $u_2$ are entropy solutions,
recalling that
\[
\chi(a,b) + \chi(b,a) = 1
\qquad\forall\, a,b\in\R,
\]
in order to prove  \eqref{f:restototale}, we have to show that
\begin{equation}\label{f:restopos}
\W(u_1,u_2) = (\chi^+ - \chi^-) \left[
A^+(u_2^+) - A^+(u_1^+)\right] \leq 0\,,
\end{equation}
where we have set  $\chi^{\pm} = \chi(u_1^{\pm}, u_2^{\pm})$.

If $\sign(u_2^+-u_1^+) = \sign(u_2^--u_1^-)$, then 
$\chi^+ = \chi^-$, and   \eqref{f:restopos} is
satisfied. Hence we shall restrict our attention to the case
$1 = \sign(u_2^+-u_1^+) = -\sign(u_2^--u_1^-)$, i.e.
\begin{equation}\label{f:case1}
u_1^+ < u_2^+, \qquad
u_2^- < u_1^-,
\end{equation}
so that $\chi^+ = 1$, $\chi^- = 0$,
since the case $-1 = \sign(u_2^+-u_1^+) = -\sign(u_2^--u_1^-)$
can be handled in a similar way.

Since \eqref{f:case1} is in force in order to prove \eqref{f:restopos} we need to show that 
\begin{equation}\label{f:restopos2}
A^+(u_2^+) - A^+(u_1^+) \leq 0
\quad\text{or, equivalently,}\quad
A^-(u_2^-) - A^-(u_1^-) \leq 0
\end{equation}
in each one of
the following six possibilities:
\begin{gather}
u_1^+ < u_2^+ < u_2^- < u_1^-, \label{f:c1}\\
u_1^+ < u_2^-  < u_2^+ < u_1^-, \label{f:c2}\\
u_2^- < u_1^+  < u_2^+ < u_1^-, \label{f:c3}\\
u_2^- < u_1^+  < u_1^- < u_2^+, \label{f:c4}\\
u_2^- < u_1^-  < u_1^+ < u_2^+, \label{f:c5}\\
u_1^+ < u_2^-  < u_1^- < u_2^+. \label{f:c6}
\end{gather}
Here, for simplicity, we have considered only strict inequalities,
but the equality cases can be handled as well.
Namely, if $u_1^-=u_2^-$ or $u_1^+=u_2^+$, then
\eqref{f:restopos2} trivially holds,
while the other equality cases can be proved using a
continuity argument,
since the quantities $\chi^{\pm}$ 
do not change. 

The analysis will be the same as the one performed 
in \cite{Mitr}.
For the reader's convenience, we briefly report here
how to use the inequalities
\eqref{Subcase1a(i)}--\eqref{Subcase2c(vi)}
in order to prove \eqref{f:restopos2} in each of the
six cases listed above.
When not explicitly stated, the inequalities
\eqref{Subcase1a(i)}--\eqref{Subcase2c(vi)} are 
supposed to be used
with $u=u_1$.

\medskip
\noindent
\(\bullet\)
Case \eqref{f:c1}.
We have the following possibilities:
\begin{itemize}
\item
$u_1^+ < u_2^+ < u_2^- < u_1^- < \uh_1$:
choose $c = u_2^+$ in \eqref{Subcase1a(ii)}.
\item
$u_1^+ < u_2^+ < \uh_1 < u_1^-$:
choose $c = u_2^+$ in \eqref{Subcase1b(iv)}.
(We remark that the relative position of $\uh_1$ and $u_2^-$ is
not relevant, so that this case includes the two subcases
$u_1^+ < u_2^+ < \uh_1 < u_2^- < u_1^-$ and
$u_1^+ < u_2^+ < u_2^- <\uh_1 < u_1^-$;
similar considerations will apply also in some 
of the following
cases without further reference.) 
\item
$u_1^+ < \uh_1 < u_2^-  < u_1^- $:
choose $c = u_2^-$ in \eqref{Subcase1b(iii)}.
\item
$\uh_1 < u_1^+ < u_2^+ < u_2^- < u_1^-$:
choose $c = u_2^+$ in \eqref{Subcase1c(v)}.
\end{itemize}

\medskip
\noindent
\(\bullet\)
Case \eqref{f:c2}.
We have the following possibilities:
\begin{itemize}
\item
$u_1^+ < u_2^- < u_2^+ < u_1^- < \uh_1$:
choose $c=u_2^+$ in \eqref{Subcase1a(ii)}.
\item
$u_1^+ < u_2^- < u_2^+ < \uh_1 < u_1^-$:
choose $c=u_2^+$ in \eqref{Subcase1b(iv)}.
\item
$u_1^+ < \uh_1 <  u_2^- < u_2^+ < u_1^-$:
choose $c=u_2^-$ in \eqref{Subcase1b(iii)}.
\item
$\uh_1 < u_1^+ < u_2^- < u_2^+ < u_1^-$:
choose $c=u_2^+$ in \eqref{Subcase1c(v)}.
\item
$u_1^+ < u_2^- <\uh_1 < u_2^+ < u_1^-$:
this case is slightly more involved because we need
to consider also the position of $\uh_2$.
We have the following subcases:
\begin{enumerate}
\item
$u_1^+ < u_2^- <\uh_1 < u_2^+ < \uh_2$:
apply \eqref{Subcase2a(ii)} to $u_2$ with $c=\uh_1$ and then
\eqref{Subcase1b(iii)} to $u_1$ again with $c = \uh_1$,
obtaining
\[
A^-(u_2^-) \leq A^-(\uh_1) \leq A^-(u_1^-).
\]
\item
$u_1^+ < u_2^- <\uh_1 < \uh_2 < u_2^+ < u_1^-$:
apply \eqref{Subcase2b(iv)} to $u_2$ with $c=\uh_1$ and then
\eqref{Subcase1b(iii)} to $u_1$ again with $c = \uh_1$
obtaining
\[
A^-(u_2^-) \leq A^-(\uh_1) \leq A^-(u_1^-).
\]
\item
$u_1^+ < u_2^- <\uh_2 < \uh_1 < u_2^+ < u_1^-$:
apply \eqref{Subcase2b(iii)} to $u_2$ with $c=\uh_1$ and then
\eqref{Subcase1b(iv)} to $u_1$ again with $c = \uh_1$
obtaining
\[
A^+(u_2^+) \leq A^+(\uh_1) \leq A^+(u_1^+).
\]
\item
$\uh_2 < u_2^- < \uh_1 < u_2^+ < u_1^-$:
apply \eqref{Subcase2c(v)} to $u_2$ with $c=\uh_1$ and then
\eqref{Subcase1b(iv)} to $u_1$ again with $c = \uh_1$
obtaining
\[
A^+(u_2^+) \leq A^+(\uh_1) \leq A^+(u_1^+).
\]
\end{enumerate}
\end{itemize}

\medskip
\noindent
\(\bullet\)
Case \eqref{f:c3}.
We have the following possibilities:
\begin{itemize}
\item
$u_2^- < u_1^+ < u_2^+ < u_1^- < \uh_1$:
choose $c=u_2^+$ in \eqref{Subcase1a(ii)}.
\item
$u_2^- < u_1^+ < u_2^+ < \uh_1 < u_1^-$:
choose $c=u_2^+$ in \eqref{Subcase1b(iv)}.
\item
$\uh_1 < u_1^+ < u_2^+ < u_1^-$:
choose $c=u_2^+$ in \eqref{Subcase1c(v)}.
\item
$u_2^- < u_1^+ < \uh_1 <  u_2^+ < u_1^-$:
this case is slightly more involved because we need
to consider also the position of $\uh_2$.
We have the following subcases:
\begin{enumerate}
\item
$u_2^- < u_1^+ < \uh_1 <  u_2^+ < \uh_2$:
apply \eqref{Subcase2a(ii)} to $u_2$ with $c=\uh_1$ and then
\eqref{Subcase1b(iii)} to $u_1$ again with $c = \uh_1$
obtaining
\[
A^-(u_2^-) \leq A^-(\uh_1) \leq A^-(u_1^-).
\]
\item
$u_2^- < u_1^+ < \uh_1 < \uh_2 < u_2^+ < u_1^-$:
apply \eqref{Subcase2b(iv)} to $u_2$ with $c=\uh_1$ and then
\eqref{Subcase1b(iii)} to $u_1$ again with $c = \uh_1$
obtaining
\[
A^-(u_2^-) \leq A^-(\uh_1) \leq A^-(u_1^-).
\]
\item
$u_2^-  < \uh_2 < \uh_1 < u_2^+ < u_1^-$:
apply \eqref{Subcase2b(iii)} to $u_2$ with $c=\uh_1$ and then
\eqref{Subcase1b(iv)} to $u_1$ again with $c = \uh_1$
obtaining
\[
A^+(u_2^+) \leq A^+(\uh_1) \leq A^+(u_1^+).
\]
\item
$\uh_2 < u_1^+ < \uh_1 < u_2^+ < u_1^-$:
apply \eqref{Subcase2c(v)} to $u_2$ with $c=\uh_1$ and then
\eqref{Subcase1b(iv)} to $u_1$ again with $c = \uh_1$
obtaining
\[
A^+(u_2^+) \leq A^+(\uh_1) \leq A^+(u_1^+).
\]
\end{enumerate}
\end{itemize}

\medskip
\noindent
\(\bullet\)
Case \eqref{f:c4}.
This case is symmetric to \eqref{f:c2}. It is enough to
replace $\uh_1$ with $\uh_2$ and to apply 
\eqref{Subcase2a(i)}--\eqref{Subcase2c(vi)}
instead of
\eqref{Subcase1a(i)}--\eqref{Subcase1c(vi)}
or conversely.

\medskip
\noindent
\(\bullet\)
Case \eqref{f:c5}.
This case is symmetric to \eqref{f:c1}. It is enough to
replace $\uh_1$ with $\uh_2$ and to apply 
\eqref{Subcase2a(i)}--\eqref{Subcase2c(vi)}
instead of
\eqref{Subcase1a(i)}--\eqref{Subcase1c(vi)}.

\medskip
\noindent
\(\bullet\)
Case \eqref{f:c6}.
This case is symmetric to \eqref{f:c3}. It is enough to
replace $\uh_1$ with $\uh_2$ and to apply 
\eqref{Subcase2a(i)}--\eqref{Subcase2c(vi)}
instead of
\eqref{Subcase1a(i)}--\eqref{Subcase1c(vi)}
or conversely.
\end{proof}


\begin{thebibliography}{10}

\bibitem{ADGV}
Adimurthi, Rajib Dutta, Shyam~Sundar Ghoshal, and G.~D. Veerappa~Gowda,
  \emph{Existence and nonexistence of {TV} bounds for scalar conservation laws
  with discontinuous flux}, Comm. Pure Appl. Math. \textbf{64} (2011), no.~1,
  84--115. \MR{2743877 (2012f:35334)}

\bibitem{AMV}
Adimurthi, {S.} Mishra, and {G. D.} Veerappa~Gowda, \emph{Optimal entropy
  solutions for conservation laws with discontinuous flux-functions}, J.
  Hyperbolic Differ. Equ. \textbf{2} (2005), no.~4, 783--837. \MR{2195983
  (2007g:35144)}

\bibitem{ACDD}
{L.} Ambrosio, {G.} Crasta, {V.} De~Cicco, and {G.} De~Philippis, \emph{A
  nonautonomous chain rule in {$W^{1,p}$} and {$BV$}}, Manuscripta Math.
  \textbf{140} (2013), no.~3-4, 461--480. \MR{3019135}

\bibitem{AFP}
{L.} Ambrosio, {N.} Fusco, and {D.} Pallara, Functions of bounded variation and
  free discontinuity problems, Oxford Mathematical Monographs, The Clarendon
  Press Oxford University Press, New York, 2000. \MR{MR1857292 (2003a:49002)}

\bibitem{AKR}
{B.} Andreianov, {K.H.} Karlsen, and {N.H.} Risebro, \emph{A theory of
  {$L^1$}-dissipative solvers for scalar conservation laws with discontinuous
  flux}, Arch. Ration. Mech. Anal. \textbf{201} (2011), no.~1, 27--86.
  \MR{2807133}

\bibitem{AnMitr}
{B.} Andreianov and {D.} Mitrovic,
\emph{Entropy conditions for scalar conservation laws with discontinuous flux revised},
to appear in Ann. Inst. H. Poincar\'e Anal. Non Lin\'eaire

\bibitem{AP}
E.~Audusse and B.~Perthame, \emph{Uniqueness for scalar conservation laws with
  discontinuous flux via adapted entropies}, Proc. Roy. Soc. Edinburgh Sect. A
  \textbf{135} (2005), no.~2, 253--265. \MR{MR2132749 (2006g:35174)}

\bibitem{disc0}
F.~Bachmann and J.~Vovelle, \emph{Existence and uniqueness of entropy solution
  of scalar conservation laws with a flux function involving discontinuous
  coefficients}, Comm. Partial Differential Equations \textbf{31} (2006),
  no.~1-3, 371--395. \MR{MR2209759 (2008b:35170)}

\bibitem{Cocl}
{G.M.} Coclite and {N.H.} Risebro, \emph{Conservation laws with time dependent
  discontinuous coefficients}, SIAM J. Math. Anal. \textbf{36} (2005), no.~4,
  1293--1309 (electronic). \MR{2139451 (2006c:35179)}

\bibitem{CoMeRo}
{R.M.} Colombo, {M.} Mercier, and {M.D.} Rosini, \emph{Stability and total
  variation estimates on general scalar balance laws}, Commun. Math. Sci.
  \textbf{7} (2009), no.~1, 37--65. \MR{2512832 (2011a:35325)}

\bibitem{CDC}
{G.} Crasta and {V.} {De Cicco}, \emph{A chain rule formula in the space {$BV$}
  and applications to conservation laws}, SIAM J. Math. Anal. \textbf{43}
  (2011), no.~1, 430--456. \MR{2765698}

\bibitem{Daf}
C.M. Dafermos, Hyperbolic conservation laws in continuum physics, third ed.,
  Grundlehren der Mathematischen Wissenschaften [Fundamental Principles of
  Mathematical Sciences], vol. 325, Springer-Verlag, Berlin, 2010.
  \MR{MR2574377}

\bibitem{Dal}
{A.-L.} Dalibard, \emph{Kinetic formulation for heterogeneous scalar
  conservation laws}, Ann. Inst. H. Poincar\'e Anal. Non Lin\'eaire \textbf{23}
  (2006), no.~4, 475--498. \MR{2245753 (2007h:35223)}
  
\bibitem{Dal2}
{A.-L.} Dalibard, \emph{Homogenization of non-linear scalar 
  conservation laws}, Arch. Rational Mech. Anal.  \textbf{192}
  (2009), no.~1,  117--164. \MR{2481063 (2010c:35014)}  
  

\bibitem{DCFV2}
V.~De~Cicco, N.~Fusco, and A.~Verde, \emph{A chain rule formula in {$BV$} and
  application to lower semicontinuity}, Calc. Var. Partial Differential
  Equations \textbf{28} (2007), no.~4, 427--447. \MR{MR2293980 (2007j:49016)}

\bibitem{Die}
{S.} Diehl, \emph{A uniqueness condition for nonlinear convection-diffusion
  equations with discontinuous coefficients}, J. Hyperbolic Differ. Equ.
  \textbf{6} (2009), no.~1, 127--159. \MR{2512505 (2010f:35210)}

\bibitem{Fed}
H.~Federer, Geometric measure theory, Die Grundlehren der mathematischen
  Wissenschaften, Band 153, Springer-Verlag New York Inc., New York, 1969.
  \MR{MR0257325 (41 \#1976)}

\bibitem{GNPT}
M.~Garavello, R.~Natalini, B.~Piccoli, and A.~Terracina, \emph{Conservation
  laws with discontinuous flux}, Netw. Heterog. Media \textbf{2} (2007), no.~1,
  159--179 (electronic). \MR{MR2291816 (2008e:35126)}

\bibitem{disc8}
K.~H. Karlsen, N.~H. Risebro, and J.~D. Towers, \emph{{$L^1$} stability for
  entropy solutions of nonlinear degenerate parabolic convection-diffusion
  equations with discontinuous coefficients}, Skr. K. Nor. Vidensk. Selsk.
  (2003), no.~3, 1--49. \MR{MR2024741 (2004j:35149)}

\bibitem{Kruz}
S.~N. Kru{\v{z}}kov, \emph{First order quasilinear equations with several
  independent variables.}, Mat. Sb. (N.S.) \textbf{81 (123)} (1970), 228--255.
  \MR{0267257 (42 \#2159)}

\bibitem{LPT}
P.-L. Lions, B.~Perthame, and E.~Tadmor, \emph{A kinetic formulation of
  multidimensional scalar conservation laws and related equations}, J. Amer.
  Math. Soc. \textbf{7} (1994), no.~1, 169--191. \MR{1201239 (94d:35100)}

\bibitem{Mitr-multi}
D.~Mitrovic, \emph{Existence and stability of a multidimensional scalar
  conservation law with discontinuous flux}, Netw. Heterog. Media \textbf{5}
  (2010), no.~1, 163--188. \MR{2601993 (2011b:35325)}

\bibitem{Mitr-DCDS}
D.~Mitrovic, \emph{New entropy conditions for scalar conservation laws with
  discontinuous flux}, Discrete Contin. Dyn. Syst. \textbf{30} (2011), no.~4,
  1191--1210. \MR{2812961 (2012e:35152)}

\bibitem{Mitr}
{D.} Mitrovic, \emph{Proper entropy conditions for scalar conservation laws
  with discontinuous flux}, Tech. report, 2012.

\bibitem{Pan1}
E.~Yu. Panov, \emph{Existence and strong pre-compactness properties for entropy
  solutions of a first-order quasilinear equation with discontinuous flux},
  Arch. Ration. Mech. Anal. \textbf{195} (2010), no.~2, 643--673. \MR{2592291
  (2011h:35039)}

\bibitem{Pert1}
B.~Perthame, \emph{Uniqueness and error estimates in first order quasilinear
  conservation laws via the kinetic entropy defect measure}, J. Math. Pures
  Appl. (9) \textbf{77} (1998), no.~10, 1055--1064. \MR{1661021 (2000e:35141)}

\bibitem{Pertbook}
{B.} Perthame, Kinetic formulation of conservation laws, Oxford Lecture Series
  in Mathematics and its Applications, vol.~21, Oxford University Press,
  Oxford, 2002. \MR{2064166 (2005d:35005)}

\bibitem{vol}
A.I. Vol{\cprime}pert, \emph{Spaces {${\rm BV}$} and quasilinear equations},
  Mat. Sb. (N.S.) \textbf{73 (115)} (1967), 255--302. \MR{MR0216338 (35
  \#7172)}

\end{thebibliography}

\def\cprime{$'$}
\providecommand{\bysame}{\leavevmode\hbox to3em{\hrulefill}\thinspace}
\providecommand{\MR}{\relax\ifhmode\unskip\space\fi MR }
\providecommand{\MRhref}[2]{%
  \href{http://www.ams.org/mathscinet-getitem?mr=#1}{#2}
}
\providecommand{\href}[2]{#2}

\end{document}